\documentclass[11pt]{amsart}
\usepackage{amssymb,amsmath,amsfonts,amsthm}
\usepackage[margin=28mm]{geometry}
\usepackage{enumitem}
\setlist{nosep}
\usepackage[hidelinks]{hyperref}
\newtheorem{theorem}{Theorem}[section]
\newtheorem{lemma}[theorem]{Lemma}
\newtheorem{proposition}[theorem]{Proposition}
\newtheorem{corollary}[theorem]{Corollary}
\theoremstyle{definition}
\newtheorem{definition}[theorem]{Definition}
\newtheorem{example}[theorem]{Example}
\theoremstyle{remark}

\DeclareMathOperator{\dist}{dist}
\DeclareMathOperator{\tr}{tr}
\DeclareMathOperator{\diag}{diag}

\newcommand{\R}{\mathbb R}
\newcommand{\C}{\mathbb C}
\newcommand{\Slopes}{\mathcal S}
\DeclareMathOperator{\spec}{spec}
\DeclareMathOperator{\Hess}{Hess}
\newcommand{\Id}{\mathrm{Id}}
\newcommand{\dd}{\,d}
\numberwithin{equation}{section}
\title[Umbilic slopes and cubic Weingarten surfaces]{Umbilic slopes and cubic Weingarten surfaces}
\author{Haoxuan Cheng}
\address{School of Mathematical Sciences, Fudan University, Shanghai 200433, China}
\email{hxcheng25@m.fudan.edu.cn}
\date{}
\subjclass[2020]{Primary 53A05; Secondary 53C42, 30C65, 35J60}
\keywords{Weingarten surfaces, umbilics, principal curvatures, quasiregular mappings, ellipsoids of revolution}
\begin{document}
\begin{abstract}
We study umbilic slopes and the global classification of cubic
Weingarten surfaces. For a smooth surface in
Euclidean three-space, a nonconstant principal curvature germ with a unique
secant tangent has slope zero, minus one, infinity, an odd integer at least
three, or its reciprocal. At a nonconstant umbilic germ, a smooth regular
relation between mean and Gaussian curvature supplies such a tangent
after a continuous labeling of
the principal curvatures; the ordered curvature image has at most two
limiting directions, both in the same set. We also classify compact immersed surfaces without boundary of class
$C^4$ satisfying $\kappa_2=c\kappa_1^3$ in some order at every point,
for a fixed real $c$: each connected component is embedded as an
ellipsoid of revolution. The arguments share a classification of homogeneous Hessian
identities at an umbilic. Scalar unique continuation treats infinite-order
contact in the smooth slope problem, whereas an exact fourth-order growth
estimate and angular energy estimates yield the cubic classification.
\end{abstract}
\maketitle

\section{Introduction}\label{sec:introduction}
A relation between the principal curvatures constrains how a surface
can depart from its tangent sphere near an umbilic. We study this
constraint through a local $C^\infty$ restriction on the slope of the
principal curvature image and a global $C^4$ classification of compact
cubic Weingarten surfaces as ellipsoids of revolution.

These two questions appear in Yau's Problem~58
\cite{Yau1982}.
Hopf's analytic slope restriction is described in
\cite{Hopf1989}; Yau asks whether it
persists for smooth Weingarten surfaces of genus zero. The cubic
classification holds for analytic surfaces \cite{Simon2007}, whereas
Fern\'andez and Mira constructed compact $C^2$ counterexamples
\cite{FernandezMira2023}.

Complex-analytic and Codazzi approaches to Weingarten rigidity appear
in \cite{Bryant2011,AEG2010}. Uniqueness results for prescribed mean
curvature and for surface classes modeled by elliptic equations are
proved in \cite{GM2016,GM2020}. The quasiconformal curvature-diagram
condition in \cite{GMT2022} gives another extension of Hopf's sphere
theorem. The results below concern local slope restrictions and a
finite-regularity cubic relation.

For the definitions, let $X:M\to\R^3$ be a $C^2$ immersion of a
two-dimensional manifold. Higher regularity will be specified in each
statement. Let $p$ be an interior point of $M$. Choose a unit normal $N$ near $p$
and use the shape operator $A=-dN$, with the tangent space identified by
$dX$. Write
\[
 H=\tfrac12\tr A,\qquad K=\det A.
\]
Assume that $p$ is umbilic, so $A(p)=tI$ for some $t\in\R$.
A continuous labeling $(\kappa_1,\kappa_2)$ means two continuous real
functions near $p$ whose unordered values are the eigenvalues of $A$.
Such a labeling always exists by ordering the eigenvalues, but we do not
require that order. No choice of continuous principal direction fields is
part of this definition. Set
\[
 v(q)=(\kappa_1(q)-t,\kappa_2(q)-t).
\]
The curvature germ is \emph{nonconstant} if every neighborhood of $p$
contains a point where $v\ne0$. This property is independent of the labeling.

\begin{definition}
The curvature image for the chosen labeling has secant tangent $L$ at $p$
if $L\subset\R^2$ is a line through the origin and
\begin{equation}\label{eq:tangent}
 \lim_{\substack{q\to p\\v(q)\ne0}}
 \frac{\dist(v(q),L)}{|v(q)|}=0.
\end{equation}
The slope of $L$ is measured with $\kappa_1$ as the horizontal coordinate
and $\kappa_2$ as the vertical coordinate; the vertical line has slope
$\infty$.
\end{definition}
For a nonconstant germ, such a line is unique if it exists. The definition
uses an unoriented line, so approaching it from opposite directions makes
no difference. Introduce the set
\begin{equation}\label{eq:slopes}
 \Slopes=\{0,-1,\infty\}\cup
 \{2j+1,(2j+1)^{-1}:j=1,2,\ldots\}.
\end{equation}

\begin{theorem}\label{thm:slope}
Let $p$ be an interior umbilic of a $C^\infty$ immersion of a surface into
$\R^3$. Suppose its curvature germ is nonconstant. If a continuous labeling
of the principal curvatures has a secant tangent at $p$, its slope belongs
to $\Slopes$.
\end{theorem}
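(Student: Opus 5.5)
The plan is to reduce the slope restriction to a classification of homogeneous Hessian identities for the leading Taylor term of the surface at $p$. I then treat separately the case where no such leading term exists, that is, infinite-order contact with the sphere.

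\emph{Setup.} Since $0,-1,\infty\in\Slopes$, I assume the slope $m$ lies outside $\{0,-1,\infty\}$. I also note $m\neq1$. Indeed, slope $1$ would force $\kappa_1-t$ and $\kappa_2-t$ to agree to leading order, while a nonconstant umbilic germ must produce a nonzero traceless Hessian at some order; I make this precise in the next step.

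I write $X$ near $p$ as a graph over the tangent plane and subtract the osculating sphere of curvature $t$, so that $u=\tfrac t2|x|^2+(\text{sphere correction})+w$. The shape operator then satisfies
\[
A-tI=\Hess w+O(|x|\,|D^2w|+|Dw|^2+\dots),
\]
with the error of strictly higher order than $\Hess w$ wherever $w$ has a lowest nonvanishing homogeneous term. Hence $v(q)$ is asymptotic to the ordered eigenvalues of $\Hess w$. In complex notation $z=x_1+ix_2$ and $f=w_z$, we have $\partial_{\bar z}f=\tfrac14\Delta w$, which is half the trace part, and $\partial_z f=w_{zz}$, which encodes the traceless part.

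The secant-tangent hypothesis with slope $m$ says that, up to $o(|v|)$, the eigenvalues $\lambda_1,\lambda_2$ of $\Hess w$ satisfy $\lambda_2=m\lambda_1$ in the chosen continuous labeling. Consequently
\[
|\Delta w|=\Bigl|\frac{1+m}{1-m}\Bigr|\,|\lambda_1-\lambda_2|+o,
\]
which gives $|\partial_{\bar z}f|\le k\,|\partial_z f|$ up to $o$-errors, with $k=|1+m|/|1-m|$. If $k<1$ (i.e.\ $m<0$), $f$ satisfies an approximate Beltrami inequality. If $k>1$ (i.e.\ $m>0$), $\bar f$ satisfies one with constant $1/k<1$.

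\emph{Finite-order case.} Suppose $w$ has a lowest nonzero homogeneous polynomial term $P$ of degree $n\ge3$. Blowing up, $v(q)/|q|^{n-2}$ converges to the ordered eigenvalues of $\Hess P$. The tangent condition then becomes an exact homogeneous Hessian identity: $\Hess P$ has eigenvalue ratio $m$ on the unit circle, away from zeros, with a continuous labeling.

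I would classify these identities as follows. Writing $P=r^n g(\theta)$, constant ratio $m\ne-1$ forces $\det\Hess P=\tfrac{m}{(1+m)^2}(\Delta P)^2$. This is a second-order ODE in $g$. Its only polynomial solutions with $m\notin\{0,\infty,-1\}$ are the radial ones, $g$ constant, up to rotation. One checks this by showing that a nonconstant $g$ produces a non-polynomial angular dependence, or a zero of $\Delta P$ at which the ratio must jump.

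For $P=c\,r^n$ the eigenvalues are $n(n-1)cr^{n-2}$ and $ncr^{n-2}$, so $m=n-1$ or $1/(n-1)$. Polynomiality of $r^n$ forces $n$ to be even, and $n=2$ is absorbed into the sphere. Hence $n-1=2j+1$ with $j\ge1$. This also settles $m=1$: that case is impossible, because equal eigenvalues make $\Hess P$ scalar, which forces $P$ to be quadratic.

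\emph{Infinite-order case and main obstacle.} If $w$ vanishes to infinite order at $p$, the blow-up gives nothing, and I would instead use unique continuation. The approximate Beltrami inequality shows that $f$ (or $\bar f$) satisfies
\[
|\partial_{\bar z}F|\le k'\,|\partial_zF|+\varepsilon(|F|+\dots),\qquad k'<1,
\]
in a small disc. The error terms come from the nonlinear curvature expression. They are controlled by $|f|$ and $|D f|$ times factors tending to $0$, and I would absorb them by shrinking the disc.

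Such an $F$ is a quasiregular-type map. By Bers–Vekua similarity or Carleman estimates, an $F$ vanishing to infinite order at $p$ must vanish identically. Then $w$ is trivial and the curvature germ is constant, contradicting the hypothesis.

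I expect this to be the main difficulty, specifically making the error terms genuinely of lower order than $|\partial_zF|$ uniformly near $p$. This matters because the tangent hypothesis is only an $o(|v|)$ statement, whereas $|v|$ and $|D^2w|$ must be compared pointwise. My plan is to use the secant condition to bound the cone in which $\Hess w$ lies, and to verify that $A-tI-\Hess w$ is $O(|x|)\,|D^2w|+O(|Dw|)$, which is adequate for a Carleman inequality.
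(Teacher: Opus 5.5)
Your reduction follows the paper's. You subtract the tangent sphere and turn the secant condition into $|F_{\bar z}|\le k|F_z|+Cr|F|$ with $k<1$ for a complex gradient of the difference. You conjugate when $m>0$, which also covers $m=1$, where the limiting constant is $0$. You then exclude infinite-order contact by the similarity principle and blow up the first Taylor term $P$, of degree $n\ge3$, to get $(m+1)^2\det D^2P=m(\tr D^2P)^2$ on the plane. Extracting this symmetric identity is the right move: the limits of the continuous labels along rays need not form a continuous labeling of the eigenvalues of $D^2P$, and the identity holds trivially where $D^2P=0$.

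\textbf{The gap.} The genuine gap is the next step, which you dispose of in one sentence: that the only real homogeneous polynomials of degree $n\ge3$ satisfying this identity with $m\notin\{0,-1\}$ are $a r^n$. This is exactly where the odd integers come from, and your sketch does not prove it.

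In polar form, $P=r^ng(\theta)$, the identity reads
\[
 (m+1)^2\bigl[n(n-1)g(ng+g'')-(n-1)^2g'^2\bigr]=m\,(n^2g+g'')^2 .
\]
This is a fully nonlinear equation, quadratic in $g''$, and it has nonconstant local analytic solutions: solve for $g''$ at a simple real root near suitable initial data. Nothing in your sketch shows that none of these is a trigonometric polynomial of degree at most $n$ with frequencies of the parity of $n$. The phrase ``a nonconstant $g$ produces a non-polynomial angular dependence'' just restates the claim.

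Your alternative mechanism also fails. If $\Delta P(e)=0$ and $m\ne-1$, the identity forces $\det D^2P(e)=0$, hence $D^2P(e)=0$. The ratio is then simply undefined there, and nothing jumps. Moreover, reasoning on the real unit circle cannot see complex zeros of $\Delta P$, and these must also be excluded.

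\textbf{What the paper does instead.} Lemma~\ref{lem:polynomial} settles this algebraically.
\begin{enumerate}
\item Set $Q(z,w)=P\bigl((z+w)/2,(z-w)/(2i)\bigr)$ with $z,w$ independent. The identity becomes the polynomial identity $Q_{zz}Q_{ww}=\lambda Q_{zw}^2$, with $\lambda=(m-1)^2/(m+1)^2\notin\{0,1\}$.
\item A local order computation at a hypothetical projective root $[\tau:1]$, $\tau\ne0$, of $Q_{zw}$ contradicts $\lambda\ne1$. So $Q_{zw}$ is a monomial.
\item Reality forces $n$ even and makes $Q_{zw}$ a real multiple of $(zw)^{n/2-1}$. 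Integration gives $Q=a(zw)^{n/2}+bz^n+\bar b w^n$.
\item One coefficient comparison forces $b=0$.
\end{enumerate}
Without this lemma or a complete substitute, your argument shows only that the slope satisfies a Hessian identity for some homogeneous polynomial, not that it lies in $\Slopes$.
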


At a nonconstant umbilic germ satisfying a smooth regular relation
$\Phi(K,H)=0$, a suitable continuous labeling has such a tangent; every limiting direction for the ordered
curvatures also has slope in $\Slopes$. The precise statement and the
distinction between these two labelings appear in
Subsection~\ref{sec:relation}. Ordered tangent uniqueness is not asserted.

The global result concerns a fixed real constant $c$ and the unordered relation
\begin{equation}\label{eq:cubic}
 (\kappa_1-c\kappa_2^3)(\kappa_2-c\kappa_1^3)=0
 \quad\text{at every point of }M.
\end{equation}
This is precisely the statement that $\kappa_2=c\kappa_1^3$ holds in
some order at each point. The condition is independent of the eigenvalue
labels and of reversal of the local normal. We call an immersion satisfying it a cubic
immersion. A nonzero umbilic has $A=\lambda\Id$ with
$\lambda\ne0$.

\begin{theorem}\label{thm:main}
Let $X:M\to\R^3$ be a $C^4$ immersion of a nonempty compact smooth
two-dimensional manifold without boundary. Suppose that
\eqref{eq:cubic} holds for a fixed $c\in\R$. Then $c>0$, and the
restriction of $X$ to each connected component of $M$ is an embedding
onto an ellipsoid of revolution, including a round sphere. If $a_e>0$
and $b_e>0$ are its equatorial and axial semiaxes, respectively, then
\[
 b_e=\frac{a_e^2}{\sqrt c}.
\]
Conversely, every ellipsoid of revolution with this semiaxis relation
satisfies \eqref{eq:cubic} with the given constant $c$.
\end{theorem}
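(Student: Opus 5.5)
Since the cubic relation \eqref{eq:cubic} is invariant under relabeling and normal reversal, the plan is to first pass to a global smooth description of the umbilic-free part, then handle umbilics by local index arguments, and finally apply a Hopf-type integral/rotational symmetry argument.

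\emph{Step 1 (converse and sign of $c$).} For an ellipsoid of revolution with equatorial semiaxis $a_e$ and axial semiaxis $b_e$, the meridian curvature and parallel curvature are explicit; writing the profile as $x^2/a_e^2+z^2/b_e^2=1$ one finds $\kappa_{\mathrm{mer}}=\kappa_{\mathrm{par}}^3\,a_e^4/b_e^2$, so \eqref{eq:cubic} holds with $c=a_e^4/b_e^2$, i.e.\ $b_e=a_e^2/\sqrt c$. For $c>0$: a compact surface has an elliptic point where $\kappa_1\kappa_2>0$; there $\kappa_2=c\kappa_1^3$ forces $c\kappa_1^4>0$, so $c>0$. (If $c=0$, some principal curvature vanishes everywhere, contradicting the elliptic point.) Then $K=c\kappa_1^4\ge0$ wherever the relation holds, and away from $\kappa_1=0$ one gets $K>0$; I would show $K>0$ everywhere (a point with $\kappa_1=0$ forces $\kappa_2=0$, a flat point, which I'd exclude by the growth estimate below), so by Hadamard each component is an embedded ovaloid.

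\emph{Step 2 (umbilic structure).} Umbilics satisfy $\lambda=c\lambda^3$, so $\lambda=\pm c^{-1/2}$ are nonzero umbilics. Away from umbilics the branch of \eqref{eq:cubic} is locally constant, and the relation is a regular Weingarten relation $\Phi(K,H)=0$ near nonumbilic points. At an umbilic, the two branches $\kappa_2=c\kappa_1^3$ and $\kappa_1=c\kappa_2^3$ both pass through $(\lambda,\lambda)$ with slopes $3$ and $1/3$, which lie in $\Slopes$; these are elliptic slopes (positive, $\ne1$), so the relation is locally a uniformly elliptic Weingarten equation. Using the shared classification of homogeneous Hessian identities at an umbilic together with the exact fourth-order growth estimate, I would show: the umbilic is isolated and the Hopf-type quadratic differential $Q$ (the tracefree second fundamental form, suitably renormalized) satisfies a quasiregular (Beltrami-type) equation $Q_{\bar z}=\mu Q+\nu\bar Q$ with $|\mu|+|\nu|$ controlled, so $Q$ has isolated zeros of negative index. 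The $C^4$ hypothesis gives exactly the regularity needed for the linearization; this is the main obstacle, since without analyticity one must rule out infinite-order contact and nonisolated umbilics, and the fourth-order growth estimate plus angular energy estimates are what I expect to do this. Also the two branches may swap along curves through the umbilic; the angular energy estimate should show only one branch occurs in a punctured neighborhood.

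\emph{Step 3 (global).} By Poincar\'e--Hopf on a sphere, total index of umbilics is $2$, while each isolated umbilic of the line fields has index at most $1$ forced by the slope $3$ or $1/3$ regime (indeed index exactly $1$, not negative, since the curvature ordering is fixed by which of $|\kappa_1|<|\kappa_2|$); so there are exactly two umbilics or the surface is totally umbilic (a sphere, $c$-compatible). Then I would show the curvature lines form a rotationally symmetric foliation: use the Codazzi equations with the relation to show the nonumbilic region is foliated by planar lines of curvature (the "parallel" direction has constant curvature along it), giving a surface of revolution, then integrate the profile ODE $\kappa_{\mathrm{mer}}=c\kappa_{\mathrm{par}}^3$ with closing condition to obtain the ellipsoid, matching Step 1.
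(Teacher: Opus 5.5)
Your Step 1 is fine: the sign of $c$ via an elliptic point, and the converse computation $c=a_e^4/b_e^2$. The later steps rest on a misidentification of the equation and have two genuine gaps.

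\textbf{Step 2: the relation is hyperbolic, not elliptic.} For $W=\kappa_2-c\kappa_1^3$ with $c>0$ one has $W_{\kappa_1}W_{\kappa_2}=-3c\kappa_1^2<0$. A \emph{positive} slope of the curvature diagram makes the relation hyperbolic; elliptic relations such as $H=\mathrm{const}$ have negative slope. At an umbilic the relation also degenerates. In the paper's symmetric form $F=\tr A-f(\det A)$, the linearization $F_Q$ vanishes at $A=\Id$ and is asymptotically $2ar^2\diag(1,-3)$, which is indefinite.
\begin{itemize}
\item The Hopf differential satisfies no equation of the form $Q_{\bar z}=\mu Q+\nu\bar Q$. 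Any such equation would force isolated umbilics of negative index. Yet the poles of a nonspherical ellipsoid of revolution have index $+1$; there $Q$ behaves like a nonzero multiple of $\bar z^2$. Your Step 3 then asserts index exactly $1$, which contradicts Step 2 and is not derived from anything.
\item The ``growth estimate'' cannot exclude flat points. The paper's fourth-order estimate (Lemma~\ref{lem:c4-contact}) compares with the tangent sphere at a \emph{nonzero} umbilic, where $t$ near $1$ makes the support-potential gradient quasiregular. At $A=0$ the two branches have slopes $0$ and $\infty$, and nothing gives comparable control.
\end{itemize}
So $K>0$, Hadamard, genus zero and Poincar\'e--Hopf are all unavailable at that stage. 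The paper never excludes flat points directly. It finds a nonzero umbilic in each component of $\{K>0\}$ by a Gauss--Codazzi integral argument with a cutoff near $\{K=0\}$ (Proposition~\ref{prop:umbilic}). Flat points disappear only because the final completion exhausts the component.

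\textbf{Step 3: symmetry cannot come from Codazzi alone.} The Codazzi equations plus the relation do not make curvature lines planar, nor $t$ constant along one family. In principal coordinates Codazzi holds automatically, and the Gauss equation becomes the quasilinear wave equation \eqref{eq:principal-divergence} for $t$. Nonrotational Cauchy data therefore give, via Bonnet, local nonrotational surfaces with $\kappa_2=\kappa_1^3$. Even an exact count of two umbilics would not by itself yield symmetry.

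Rotational symmetry must be created at one umbilic and then propagated.
\begin{itemize}
\item \emph{Creation.} The paper applies the Hessian classification with $\sigma=3$, which forces $u-\zeta=ar^4+o(r^4)$. If $a=0$, the support-potential bound $m(r)\ge c_0r^4$ gives the sphere, which spreads by an open--closed argument. If $a\ne0$, the angular energy estimate for the hyperbolic equation in $(\log r,\theta)$ shows $u_\theta=0$.
\item \emph{Propagation.} Cauchy data $(t,t_y)$ on a complete latitude circle determine the solution of the wave equation by an energy estimate. This is continued along meridian geodesics until the whole component is covered. Compactness then forces $h>0$, i.e.\ an ellipsoid.
\end{itemize}
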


No orientability or embeddedness assumption is required. The ellipsoids
associated with different components may have different centers and axes.
Strict positivity of Gaussian curvature follows from the theorem.

Both proofs subtract the tangent sphere and use the homogeneous Hessian
classification in Section~\ref{sec:common}. The analytic difficulties
differ. In the smooth case, scalar unique continuation excludes
infinite-order contact and makes a first nonzero Taylor term available
(Section~\ref{sec:smooth-slopes}). For the cubic relation, the common
classification fixes the possible degree at four. A support-potential
growth estimate, using \cite{IKO2008,AM1992}, excludes a nonzero
$o(r^4)$ remainder; an angular energy estimate turns a nonzero radial
quartic term into actual rotational symmetry
(Section~\ref{sec:cubic-local}). A nonzero umbilic supplies the starting
point (Section~\ref{sec:existence}), and periodic uniqueness and
compactness complete the surface (Section~\ref{sec:cubic-global}).

Closed rotational examples and their curvature diagrams are studied in
\cite{KS2005}; direct integration of rotational models is developed in
\cite{CarreteroCastro2022}, and their umbilic regularity in
\cite{GuilfoyleRobson2025}. Here rotational symmetry is proved from
an arbitrary compact immersion. The cubic classification uses its own
contact and propagation arguments; it is not a consequence of the
smooth slope theorem. The general $C^3$ classification remains open
in this argument.

\section{The common local Hessian calculation}\label{sec:common}\label{sec:contact}
\subsection{Subtracting the tangent sphere}

Choose Euclidean coordinates centered at $X(p)$ so that the tangent
plane is horizontal and the chosen normal is upward. The immersion is
locally a graph $z=u(x,y)$, where
\[
 u(0)=0,\qquad Du(0)=0,\qquad D^2u(0)=tI.
\]
Throughout this section, $r=(x^2+y^2)^{1/2}$. Define the graph of the
same-oriented tangent sphere, or tangent plane when $t=0$, by
\[
 s_t(x,y)=\begin{cases}
 (1-\sqrt{1-t^2r^2})/t,&t\ne0,\\
 0,&t=0.
 \end{cases}
\]
For $t\ne0$ the disk is chosen so that $|t|r<1$. The difference
$h=u-s_t$ has zero Taylor coefficients through degree two. The graph identities below use only two derivatives. Later arguments
will specify either $C^\infty$ or $C^4$ regularity when they use Taylor
expansions or contact estimates.

For a vector $\xi\in\R^2$ and a symmetric real matrix $M$, define
\[
 \mathcal L(\xi)[M]
 =\frac{(I+\xi\otimes\xi)^{-1/2}M(I+\xi\otimes\xi)^{-1/2}}
 {\sqrt{1+|\xi|^2}}.
\]
The symmetric matrix $\mathcal A=\mathcal L(Du)[D^2u]$ represents the
shape operator in an orthonormal frame, and therefore has the principal
curvatures as eigenvalues. This follows from the graph metric
$I+Du\otimes Du$ and second fundamental form
$D^2u/\sqrt{1+|Du|^2}$. Put $B=\mathcal A-tI$.

\begin{lemma}\label{lem:error}
On a sufficiently small graph disk,
\begin{equation}\label{eq:shape-error}
 B=D^2h+E,\qquad
 |E|\le Cr^2|D^2h|+Cr|Dh|
\end{equation}
for a fixed constant $C$ and the Frobenius matrix norm.
\end{lemma}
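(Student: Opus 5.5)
We compare $\mathcal L(Du)[D^2u]$ with $\mathcal L(Ds_t)[D^2s_t]$. The key fact is that the sphere (or plane) $s_t$ has shape operator exactly $tI$ in the orthonormal frame. So
\[
 \mathcal L(Ds_t)[D^2s_t]=tI
\]
identically on the disk, and we may write
\[
 B=\mathcal L(Du)[D^2u]-\mathcal L(Ds_t)[D^2s_t].
\]
With $Du=Ds_t+Dh$ and $D^2u=D^2s_t+D^2h$, we split
\[
 B=\mathcal L(Du)[D^2h]+\bigl(\mathcal L(Du)[D^2s_t]-\mathcal L(Ds_t)[D^2s_t]\bigr).
\]
The rest of the argument estimates the two pieces separately. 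Since $\mathcal L(\xi)[M]$ is linear in $M$, the first piece is handled by comparing $\mathcal L(Du)$ with the identity. The second piece is handled by the smoothness of $\mathcal L$ in $\xi$.

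\emph{First term.} The map $\xi\mapsto\mathcal L(\xi)$ is a real-analytic family of linear maps on symmetric matrices. It is even in $\xi$ and satisfies $\mathcal L(0)=\Id$. Hence
\[
 \|\mathcal L(\xi)-\Id\|\le C|\xi|^2 \quad\text{for } |\xi|\le1.
\]
Since $Du(0)=0$ and $u\in C^2$, we have $|Du|\le Cr$ on a small disk. Therefore
\[
 \bigl|\mathcal L(Du)[D^2h]-D^2h\bigr|\le Cr^2|D^2h|.
\]
Only a Lipschitz-type bound for $Du$ is used here, which $C^2$ regularity provides.

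\emph{Second term.} The map $\mathcal L$ is smooth in $\xi$ on $|\xi|\le1$, so the mean value theorem gives
\[
 \bigl|\mathcal L(Du)[D^2s_t]-\mathcal L(Ds_t)[D^2s_t]\bigr|\le C|Du-Ds_t|\,|D^2s_t|=C|Dh|\,|D^2s_t|.
\]
Here $|D^2s_t|$ is bounded on the disk $|t|r<1/2$. This only yields $C|Dh|$, while the claim requires the sharper $Cr|Dh|$. This is the main obstacle.

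To gain the extra factor of $r$, we use the evenness of $\mathcal L$ more carefully. The derivative $\partial_\xi\mathcal L(\xi)$ vanishes at $\xi=0$ and is $O(|\xi|)$. Integrating along the segment from $Ds_t$ to $Du$, every intermediate point $\xi$ satisfies $|\xi|\le Cr$. Hence
\[
 |\partial_\xi\mathcal L(\xi)|\le Cr \quad\text{along the segment.}
\]
This gives the bound $Cr|Dh|\,|D^2s_t|\le Cr|Dh|$.

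Combining the two pieces, $E:=B-D^2h$ satisfies
\[
 |E|\le Cr^2|D^2h|+Cr|Dh|.
\]
The constants depend on $t$ and on the $C^2$ modulus of $u$ near $0$, through the radius where $|Du|\le Cr$ and $|Du|\le1$. They are fixed once the disk is fixed. The case $t=0$ is included: then $s_t=0$, the second term vanishes, and the first estimate alone suffices.
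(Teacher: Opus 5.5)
Your proof is correct and matches the paper's argument: the same splitting $B=\mathcal L(Du)[D^2h]+\bigl(\mathcal L(Du)-\mathcal L(Ds_t)\bigr)[D^2s_t]$, using $\mathcal L(Ds_t)[D^2s_t]=tI$. As in the paper, evenness of $\mathcal L$ gives $\mathcal L(Du)-\Id=O(r^2)$, and the bound $\partial_\xi\mathcal L=O(r)$ along the segment from $Ds_t$ to $Du$, together with boundedness of $D^2s_t$, supplies the extra factor $r$ in the $Cr|Dh|$ term.
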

\begin{proof}
The tangent sphere satisfies $\mathcal L(Ds_t)[D^2s_t]=tI$. Hence
\[
 B=\mathcal L(Du)[D^2h]
   +\bigl(\mathcal L(Du)-\mathcal L(Ds_t)\bigr)[D^2s_t].
\]
The map $\mathcal L$ is smooth and even in its vector argument, with
$\mathcal L(0)$ the identity on symmetric matrices. As $Du,Ds_t=O(r)$,
we have $\mathcal L(Du)-\mathcal L(0)=O(r^2)$. The derivative of
$\mathcal L$ is $O(r)$ along the segment joining $Du$ and $Ds_t$, so
\[
 \|\mathcal L(Du)-\mathcal L(Ds_t)\|\le Cr|Dh|.
\]
The Hessian of $s_t$ is bounded. The two estimates prove
\eqref{eq:shape-error}.
\end{proof}

\subsection{Classification of homogeneous Hessians}\label{sec:polynomial}

A homogeneous Taylor term inherits an algebraic relation between the
trace and determinant of its Hessian. The next classification treats
positive and negative slopes in the same calculation. The reality of
the polynomial is essential.

\begin{lemma}\label{lem:polynomial}
Let $P$ be a nonzero real homogeneous polynomial in two variables of
degree $n\ge3$, and let $\sigma\in\R\setminus\{0,-1\}$. If
\begin{equation}\label{eq:hessian}
 (\sigma+1)^2\det D^2P=\sigma(\tr D^2P)^2,
\end{equation}
then $n=2m$ for an integer $m\ge2$,
\[
 P(x,y)=a(x^2+y^2)^m,\qquad a\in\R\setminus\{0\},
 \qquad \sigma\in\{2m-1,(2m-1)^{-1}\}.
\]
Conversely, each listed polynomial satisfies \eqref{eq:hessian} for
both listed values of $\sigma$.
\end{lemma}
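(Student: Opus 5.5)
Work in the complex coordinate $z=x+iy$ and write $\partial=\partial_z$, $\bar\partial=\partial_{\bar z}$. For a real function $P$,
\[
 \tr D^2P=4P_{z\bar z},\qquad
 \det D^2P=4\bigl(P_{z\bar z}^2-|P_{zz}|^2\bigr).
\]
Substituting into \eqref{eq:hessian}, the identity becomes
\[
 (\sigma+1)^2|P_{zz}|^2=(\sigma^2+2\sigma+1-4\sigma)P_{z\bar z}^2=(\sigma-1)^2P_{z\bar z}^2 .
\]
Since $P$ is real, $P_{z\bar z}$ is real. Hence $|P_{zz}|=\mu|P_{z\bar z}|$ with
\[
 \mu=\Bigl|\frac{\sigma-1}{\sigma+1}\Bigr|.
\]
Because $\sigma\ne0,-1$, we have $\mu\ne1$, and $\mu$ is finite. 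The plan is to expand $P=\sum_{j+k=n}c_{jk}z^j\bar z^k$ with $c_{kj}=\overline{c_{jk}}$. Both $P_{zz}$ and $P_{z\bar z}$ are then homogeneous of degree $n-2$. The equality $|P_{zz}|^2=\mu^2P_{z\bar z}^2$ is a polynomial identity in $z,\bar z$, so we compare extreme monomials.

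\textbf{Extreme monomials.} Let $J$ be the largest $j$ with $c_{jk}\ne0$, and set $K=n-J$. By reality, $c_{KJ}\neq0$ as well, so $J\ge n/2\ge K$. Order the terms by the power of $z$.

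\begin{itemize}
\item In $|P_{zz}|^2=P_{zz}\overline{P_{zz}}$, the top $z$-power is $z^{J-2}\bar z^{K}\cdot z^{J}\bar z^{K-2}$, giving $z^{2J-2}\bar z^{2K-2}$ with coefficient $J^2(J-1)^2|c_{JK}|^2$. This requires $J\ge2$, which holds since $J\ge n/2\ge3/2$.
\item In $P_{z\bar z}^2$, the top term is $(JKc_{JK})^2z^{2J-2}\bar z^{2K-2}$.
\end{itemize}

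Note that $P_{z\bar z}^2$ also contains the real-valued square structure. To avoid subtleties, compare the coefficient of $z^{2J-2}\bar z^{2K-2}$ on both sides. On the left, other products contributing must have $z$-exponent sum $2J-2$, which forces both factors to be extreme, so the left coefficient is $J^2(J-1)^2|c_{JK}|^2$. On the right the coefficient is $\mu^2J^2K^2c_{JK}^2$, which must then be real and positive; this needs care but follows from matching.

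If $K\le1$, the right side vanishes while the left does not. Such a $K$ occurs when $n\ge3$ and $J\ge n-1$, which is a contradiction. So $K\ge2$, and we get $(J-1)^2=\mu^2K^2$ (after taking moduli).

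Symmetrically, use the \emph{smallest} $z$-power. Let $J'$ be the smallest $j$ with $c_{jk}\neq0$; reality gives $J'=K$. This yields $(K-1)^2=\mu^2J^2$, with $K$ and $J$ swapped.

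\textbf{Pinning down $J=K$.} From $J-1=\mu K$ and $K-1=\mu J$, subtracting gives $(J-K)(1+\mu)=0$, so $J=K=m$ and $n=2m$. Then $m-1=\mu m$, so $\mu=(m-1)/m$. Also, $K\ge2$ gives $m\ge2$.

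Since $J=K$ is both the largest and the smallest exponent of $z$, the only monomial is $c_{mm}|z|^{2m}$, with $c_{mm}$ real. Hence $P=a(x^2+y^2)^m$.

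Finally, solve $|(\sigma-1)/(\sigma+1)|=(m-1)/m$.
\begin{itemize}
\item The sign $+$ gives $m(\sigma-1)=(m-1)(\sigma+1)$, so $\sigma=2m-1$.
\item The sign $-$ gives $\sigma=1/(2m-1)$.
\end{itemize}
For the converse, compute directly with $P=a|z|^{2m}$:
\[
 P_{zz}=am(m-1)z^{m-2}\bar z^m,\qquad P_{z\bar z}=am^2|z|^{2m-2}.
\]
The ratio of moduli is $(m-1)/m$, so both values of $\sigma$ satisfy \eqref{eq:hessian}.

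\textbf{Main obstacle.} The delicate step is the extreme-coefficient comparison. The right side is the square $P_{z\bar z}^2$, not $|P_{z\bar z}|^2$. Its top coefficient $(JKc_{JK})^2$ is complex, while the left coefficient is a positive real. I would handle this by using $P_{z\bar z}^2=|P_{z\bar z}|^2$, which holds since $P_{z\bar z}$ is real. Then both sides are squared moduli of polynomials, and the extreme-coefficient argument applies cleanly to $|P_{zz}|^2-\mu^2|P_{z\bar z}|^2=0$. Here the top coefficient of $z$ is
\[
 |c_{JK}|^2\bigl(J^2(J-1)^2-\mu^2J^2K^2\bigr).
\]
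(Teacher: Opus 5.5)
\textbf{There is a genuine gap in the step that yields $J=K$.} Your reduction to $(\sigma+1)^2|P_{zz}|^2=(\sigma-1)^2P_{z\bar z}^2$ is correct; the error is in the top-coefficient computation. Conjugation interchanges $z$ and $\bar z$. So the highest power of $z$ in $\overline{P_{zz}}$ comes from the term of $P_{zz}$ with the highest power of $\bar z$, namely $K(K-1)c_{KJ}z^{K-2}\bar z^{J}$ (when $K\ge2$). Its conjugate is $K(K-1)c_{JK}z^{J}\bar z^{K-2}$. Hence the coefficient of $z^{2J-2}\bar z^{2K-2}$ in $|P_{zz}|^2$ is $J(J-1)K(K-1)c_{JK}^2$, not $J^2(J-1)^2|c_{JK}|^2$. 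For example, for $P=z^3\bar z^2+z^2\bar z^3$ it is $12$, not $36$. Matching this with $\mu^2J^2K^2c_{JK}^2$ gives only
\[
 (J-1)(K-1)=\mu^2JK,
\]
which is symmetric in $J$ and $K$. The comparison at the lowest power of $z$ therefore returns the same equation, not $(K-1)^2=\mu^2J^2$. The subtraction leading to $(J-K)(1+\mu)=0$ is unavailable, and data such as $J=3$, $K=2$, $\mu^2=1/3$ pass every extreme-coefficient test.

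Your proposed remedy, rewriting $P_{z\bar z}^2$ as $|P_{z\bar z}|^2$, changes nothing. For real $P$ these are the same polynomial. For a polynomial $f$ in $z,\bar z$, the top $z$-coefficient of $|f|^2$ is the top $z$-coefficient of $f$ times the conjugate of its top $\bar z$-coefficient, not the squared modulus of one coefficient. There are also smaller problems:
\begin{itemize}
\item For $K=1$ it is the left-hand coefficient that vanishes, not the right-hand one.
\item For $K=0$ the monomial $\bar z^{2K-2}$ does not exist, so that case is not covered.
\item The case $\sigma=1$ ($\mu=0$) should be dispatched separately, for instance because $P_{zz}\equiv0$ forces $\deg P\le2$.
\end{itemize}

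\textbf{What is missing} is information about the interior of the Newton polygon of $P$, which extreme monomials cannot see. The paper obtains it from the roots of $A_0=Q_{zw}$, the complexification of $P_{z\bar z}$ in independent variables $z,w$. It uses the identity $B_0C_0=\lambda A_0^2$, where $B_0=Q_{zz}$, $C_0=Q_{ww}$ and $\lambda=(\sigma-1)^2/(\sigma+1)^2\ne1$. The argument runs as follows.
\begin{itemize}
\item Suppose $\tau\ne0$ is a root of $A_0(\cdot,1)$, and say $B_0(\tau,1)=0$.
\item Then $g=p'$, with $p=Q(\cdot,1)$, has a root of order $\ell\ge2$ at $\tau$. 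Consequently $A_0$, $B_0$, $C_0$ all vanish there to order $\ell-1$, and $p(\tau)=0$.
\item Comparing leading coefficients gives $\tau^2(\ell d)^2=\lambda\tau^2(\ell d)^2$, which contradicts $\lambda\ne1$.
\item Hence $A_0$ is a monomial, and reality makes it $K_0z^{m-1}w^{m-1}$. Integration gives $Q=az^mw^m+bz^{2m}+\bar bw^{2m}$.
\end{itemize}
Only at this point does a correctly paired extreme-coefficient comparison finish the proof. The coefficient of $z^{3m-2}w^{m-2}$ is $2m(2m-1)b\cdot am(m-1)$ in $B_0C_0$ and zero in $\lambda A_0^2$, so $b=0$. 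Your final solution of $|(\sigma-1)/(\sigma+1)|=(m-1)/m$ and your converse computation are fine.
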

\begin{proof}
If $\sigma=1$, equation~\eqref{eq:hessian} says
$(P_{xx}-P_{yy})^2+4P_{xy}^2=0$. Hence $P_{xy}=0$ and
$P_{xx}=P_{yy}$. The first identity writes $P=f(x)+g(y)$; the second
makes $f''$ and $g''$ the same constant. This contradicts $n\ge3$.
We may therefore suppose $\sigma\ne1$.

Use independent complex variables $z,w$ and set
\[
 Q(z,w)=P\left(\frac{z+w}{2},\frac{z-w}{2i}\right),\qquad
 A_0=Q_{zw},\quad B_0=Q_{zz},\quad C_0=Q_{ww}.
\]
Reality means
$Q(z,w)=\overline{Q(\bar w,\bar z)}$.
On the real slice $w=\bar z$ the trace and determinant in
\eqref{eq:hessian} are $4A_0$ and $4(A_0^2-B_0C_0)$.
The invertible complex linear change of variables therefore turns
\eqref{eq:hessian} into a polynomial identity
\begin{equation}\label{eq:complex-hessian}
 B_0C_0=\lambda A_0^2,\qquad
 \lambda=\frac{(\sigma-1)^2}{(\sigma+1)^2}\notin\{0,1\}.
\end{equation}
If $A_0=0$, homogeneity and reality give
$Q=bz^n+\bar b w^n$ with $b\ne0$; then
$B_0C_0=n^2(n-1)^2|b|^2z^{n-2}w^{n-2}\ne0$, a contradiction.
Thus all three polynomials $A_0,B_0,C_0$ are nonzero.

We claim that $A_0$ has no projective root away from the coordinate
axes. Suppose $A_0(\tau,1)=0$ for $\tau\ne0$. Equation
\eqref{eq:complex-hessian} makes $B_0$ or $C_0$ vanish there.
By exchanging $z,w$ and replacing $\tau$ by $1/\tau$ if necessary,
we may assume $B_0(\tau,1)=0$. Put $p(\zeta)=Q(\zeta,1)$ and
$g=p'$. Homogeneity gives
\begin{align}
 B_0(\zeta,1)&=g'(\zeta),\notag\\
 A_0(\zeta,1)&=(n-1)g(\zeta)-\zeta g'(\zeta),\label{eq:root-formulas}\\
 C_0(\zeta,1)&=n(n-1)p(\zeta)-2(n-1)\zeta p'(\zeta)
                      +\zeta^2p''(\zeta).\notag
\end{align}
Thus $g(\tau)=g'(\tau)=0$. The nonzero polynomial $g$ has an
expansion $g(\zeta)=d(\zeta-\tau)^\ell+\cdots$ with $d\ne0$ and
$\ell\ge2$. Both $A_0(\zeta,1)$ and $B_0(\zeta,1)$ have vanishing
order $\ell-1$ at $\tau$, and their leading coefficients have ratio
$-\tau$. Taking orders in \eqref{eq:complex-hessian} forces
$C_0(\zeta,1)$ to have order $\ell-1$ as well. In particular,
$C_0(\tau,1)=0$; equation~\eqref{eq:root-formulas} now gives
$p(\tau)=0$.

It follows that $p$ has order $\ell+1$ at $\tau$. In the last line of
\eqref{eq:root-formulas}, the sole term of order $\ell-1$ is
$\zeta^2p''(\zeta)$. Comparing leading coefficients in
\eqref{eq:complex-hessian} gives
\[
 \tau^2(\ell d)^2=\lambda\tau^2(\ell d)^2.
\]
Since $\tau\ell d\ne0$, this contradicts $\lambda\ne1$ and proves
the claim.

Factorization of a complex polynomial now shows that
$A_0=K_0z^kw^{n-2-k}$ for some $K_0\ne0$. Its reality forces
$k=n-2-k$ and $K_0\in\R$. Hence $n=2m$, $m\ge2$, and
$A_0=K_0z^{m-1}w^{m-1}$. Integrating the mixed derivative, with
homogeneity and reality, yields
\[
 Q=az^mw^m+bz^{2m}+\bar b w^{2m},
 \qquad a=K_0/m^2\in\R\setminus\{0\}.
\]
The coefficient of $z^{3m-2}w^{m-2}$ in
\eqref{eq:complex-hessian} is
$2m(2m-1)b\,a m(m-1)$ on the left and zero on the right.
This monomial differs from the central monomial and the other cross
term, since $m\ge2$. Therefore $b=0$ and $P=ar^{2m}$.
Its radial and tangential Hessian eigenvalues are
\[
 2ma(2m-1)r^{2m-2},\qquad 2ma r^{2m-2}.
\]
Substituting them in \eqref{eq:hessian} gives precisely
$\sigma=2m-1$ or $\sigma=(2m-1)^{-1}$, and also proves the converse.
\end{proof}

\section{Smooth umbilic slopes}\label{sec:smooth-slopes}
\subsection{Finite contact}\label{sec:beltrami}

The planar representation method rests on the measurable Beltrami
equation; see \cite{AB1960,Bojarski2010}. Related reductions occur in
Alessandrini's strong unique continuation theorem for divergence-form
equations \cite{Alessandrini2012}. Positive multipliers and first-order
complex equations also enter quantitative results of Davey
\cite{Davey2020} and Le Balc'h--Souza \cite{LeBalchSouza2024}.
Here we need only the following local scalar statement.

We first record the analytic statement used to exclude infinite contact
with a tangent sphere. For $z=x+iy$, set
\[
 \partial_z=\tfrac12(\partial_x-i\partial_y),\qquad
 \partial_{\bar z}=\tfrac12(\partial_x+i\partial_y).
\]
A scalar function is said to vanish to infinite order at the origin if,
for every integer $N\ge1$, its absolute value is at most $C_N|z|^N$
in some neighborhood of the origin. We use this pointwise formulation.

\begin{lemma}\label{lem:sucp}
Let $D\subset\C$ be a disk containing $0$, and let $F:D\to\C$ be $C^1$.
Suppose $b:D\to[0,\infty)$ is locally bounded and measurable, and
$0\le k<1$ is a constant such that
\begin{equation}\label{eq:beltrami-inequality}
 |F_{\bar z}|\le k|F_z|+b|F|
\end{equation}
almost everywhere. If $F$ vanishes to infinite order at $0$, then $F$
vanishes identically on a neighborhood of $0$.
\end{lemma}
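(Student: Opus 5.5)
We reduce the statement to the standard strong unique continuation property for quasiregular maps through a Carleman-type similarity argument. The approach is to absorb the lower-order term $b|F|$ into a multiplicative factor, so that $F$ is written as the product of a nonvanishing bounded factor and a genuine quasiregular map. The result then follows from the Stoilow factorization.

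The first step defines the coefficients. Set $\mu=F_{\bar z}/F_z$ where $F_z\ne0$ and $|F_{\bar z}|\le k|F_z|$, and $\mu=0$ elsewhere. On the set where $|F_{\bar z}|>k|F_z|$, inequality \eqref{eq:beltrami-inequality} gives $|F_{\bar z}|-k|F_z|\le b|F|$, and there we put $\mu=kF_{\bar z}/|F_{\bar z}|\cdot\overline{F_z}/|F_z|$ (or $\mu=0$ if $F_z=0$). Then the residual $F_{\bar z}-\mu F_z$ satisfies $|F_{\bar z}-\mu F_z|\le b|F|$ almost everywhere, with $|\mu|\le k$. Consequently
\[
 F_{\bar z}=\mu F_z+AF,\qquad |A|\le b,
\]
where $A=(F_{\bar z}-\mu F_z)/F$ on $\{F\ne0\}$ and $A=0$ elsewhere. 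On $\{F=0\}$ we have $F_z=F_{\bar z}=0$ almost everywhere, since $F$ is $C^1$ and $\nabla F=0$ a.e.\ on level sets. Hence $A$ is bounded and measurable on a smaller disk $D'$.

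The second step removes $A$. We seek $F=e^{s}G$ with $s$ bounded and continuous, solving $s_{\bar z}-\mu s_z=A$ on $D'$. Standard Beltrami theory gives such an $s\in W^{1,p}$ for some $p>2$ \cite{AB1960,Bojarski2010}. One extends $A,\mu$ by zero to $\C$ and solves $s=\mathcal C\omega$, $\omega=A+\mu\,\mathcal S\omega$, where $\mathcal C$ is the Cauchy transform and $\mathcal S$ the Beurling transform. This uses $k\|\mathcal S\|_{L^p}<1$ for $p$ close to $2$. Then $s$ is Hölder continuous and bounded. The function $G=e^{-s}F$ lies in $W^{1,p}_{\mathrm{loc}}$ and satisfies
\[
 G_{\bar z}=\mu G_z
\]
almost everywhere. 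This means $G$ is $K$-quasiregular with $K=(1+k)/(1-k)$.

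The third step concludes. Since $e^{-s}$ is bounded above and below, $G$ also vanishes to infinite order at $0$. By Stoilow factorization, $G=\phi\circ\chi$ with $\chi$ a quasiconformal homeomorphism fixing $0$ and $\phi$ holomorphic. Mori's theorem (Hölder continuity of $\chi^{-1}$) gives $|\chi^{-1}(w)|\le C|w|^{\alpha}$ with $\alpha=1/K$. If $\phi\not\equiv0$ had a zero of finite order $m$ at $0$, then $|G(z)|=|\phi(\chi(z))|\ge c|\chi(z)|^m\ge c'|z|^{mK}$ near $0$, contradicting infinite-order vanishing. Hence $\phi\equiv0$ near $0$, and therefore $F\equiv0$ on a neighborhood of $0$.

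The main obstacle is the second step. One must check that the measurable division defining $A$ is legitimate near the zero set of $F$, and that the similarity factor $s$ is bounded and continuous with only $b\in L^\infty$ available. These issues are handled by restricting to a small disk and using the $L^p$ theory of the Beurling transform.
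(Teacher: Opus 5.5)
Your proposal is correct and follows the paper's proof. It uses the same pointwise construction of $\mu$ and the bounded coefficient $A$, the same similarity factor $s$ given by the Cauchy transform of the solution of $\omega-\mu S\omega=A$ with $p>2$ close to $2$, the same Stoilow factorization of $G=e^{-s}F$, and a H\"older lower bound $|\chi(z)|\ge c|z|^{K}$ that contradicts infinite-order vanishing. The differences are minor: you take the H\"older estimate for $\chi^{-1}$ from Mori's theorem rather than from $W^{1,q}$ regularity of the inverse plus Morrey, and your appeal to $\nabla F=0$ a.e.\ on $\{F=0\}$ is valid but unnecessary, since the inequality already gives $|F_{\bar z}|\le k|F_z|$ wherever $F=0$.
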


\begin{proof}
Choose a smaller disk $D_0$ centered at $0$ with closure in $D$, on which
$b\le B$ for a finite constant $B$. We first express the inequality as
an equation
\begin{equation}\label{eq:inhom-beltrami}
 F_{\bar z}=\mu F_z+aF,\qquad |\mu|\le k,\quad |a|\le B,
\end{equation}
with measurable coefficients. Define them on the full-measure set where
\eqref{eq:beltrami-inequality} holds, and set both coefficients to zero
on its null complement. At a point where
$|F_{\bar z}|\le k|F_z|$, put $a=0$ and $\mu=F_{\bar z}/F_z$ if
$F_z\ne0$; put $\mu=0$ if $F_z=0$.
On the other set, write $X=F_z$ and $Y=F_{\bar z}$. There $Y\ne0$,
and define
\[
 \mu=\begin{cases}
 k|X|Y/(|Y|X),&X\ne0,\\
 0,&X=0,
 \end{cases}
 \qquad R=Y-\mu X.
\]
Then $|R|=|Y|-k|X|\le b|F|$. On this set $F\ne0$ by
\eqref{eq:beltrami-inequality}, so $a=R/F$ is defined and bounded by $B$.
This proves \eqref{eq:inhom-beltrami}, including the zero sets involved
in the construction.

Extend $\mu$ and $a$ by zero outside $D_0$. For compactly supported
functions $\omega\in L^p(\C)$ with $p>2$, the Cauchy transform and its
$z$ derivative
are denoted by
\[
 T\omega(z)=\frac1\pi\int_{\C}\frac{\omega(\zeta)}{z-\zeta}\,dA(\zeta),
 \qquad S\omega=\partial_zT\omega.
\]
Here $dA$ is planar area measure, and derivatives of these transforms
are taken in the distributional sense. Thus
$\partial_{\bar z}T\omega=\omega$, and $S$ is the Beurling transform.
It is bounded on $L^p(\C)$ for $1<p<\infty$, its $L^2$ norm is one,
and its operator norm tends to one as $p\to2$; see
\cite[pp.~60--61]{Bojarski2010}.
Choose $p>2$ sufficiently close to $2$ that
$k\|S\|_{L^p\to L^p}<1$; if $k=0$, any finite $p>2$ is suitable.
The Neumann series defines
\begin{equation}\label{eq:similarity}
 \omega=(I-\mu S)^{-1}a\in L^p(\C),\qquad s=T\omega.
\end{equation}
The equation $\omega-\mu S\omega=a$ shows that $\omega$ is supported
in $\overline{D_0}$. Consequently $s\in W^{1,p}_{\mathrm{loc}}(\C)$,
and $s$ is locally bounded. For the latter assertion, apply H\"older's
inequality to the Cauchy integral: its kernel has locally integrable
$p'$-th power, where $p'=p/(p-1)<2$, and the support of $\omega$ is fixed
and compact. Sobolev embedding gives a continuous representative of $s$.

Put $G=e^{-s}F$ on $D_0$. The Sobolev product and chain rules apply on
smaller disks because $s$ is locally bounded and $F$ is $C^1$. Equations
\eqref{eq:inhom-beltrami} and \eqref{eq:similarity} give
\begin{equation}\label{eq:hom-beltrami}
 G_{\bar z}-\mu G_z
 =e^{-s}\bigl(F_{\bar z}-\mu F_z-F(s_{\bar z}-\mu s_z)\bigr)=0.
\end{equation}
This is the local similarity construction used in
\cite[Lemma~4]{Bojarski2010}. It only requires the exponent $p>2$
chosen above, not invertibility on an arbitrary prescribed $L^p$ space.

The measurable Riemann mapping theorem for compactly supported $\mu$
with $\|\mu\|_\infty<1$ gives a quasiconformal homeomorphism
$\chi:\C\to\C$ solving $\chi_{\bar z}=\mu\chi_z$. We choose the
principal solution, normalized by $\chi(z)=z+O(1/z)$ at infinity; see
\cite[Proposition~3]{Bojarski2010}. The scalar factorization of a
$W^{1,2}_{\mathrm{loc}}$ solution of \eqref{eq:hom-beltrami} gives
\[
 G=\mathcal H\circ\chi
\]
with $\mathcal H$ holomorphic on $\chi(D_0)$; see
\cite[p.~26, equation~(5)]{Lorent2016}.
This also covers the constant case, or that case can be separated first.
The inverse principal solution belongs locally to $W^{1,q}$ for some
$q>2$; see \cite[p.~66]{Bojarski2010}. Applying the two-dimensional
Sobolev--Morrey embedding on a disk compactly contained in $\chi(D_0)$
gives, near $w_0=\chi(0)$,
\begin{equation}\label{eq:inverse-holder}
 |\chi^{-1}(w)-\chi^{-1}(w_0)|\le C|w-w_0|^\gamma,
 \qquad \gamma=1-2/q>0.
\end{equation}
When $\mu=0$ one can take $\chi$ to be the identity.

Since $e^{-s}$ is bounded near $0$, $G$ also vanishes to infinite order.
If $\mathcal H$ is not identically zero near $w_0$, its zero at $w_0$
has finite order $m\ge1$. Hence
$|\mathcal H(w)|\ge c|w-w_0|^m$ near $w_0$.
Combining this lower bound with \eqref{eq:inverse-holder} yields
\[
 |G(z)|\ge c'|z|^{m/\gamma}
\]
near $0$. Choosing an integer $N>m/\gamma$ contradicts infinite-order
vanishing. Thus $\mathcal H$, and then $F$, vanishes near the origin.
\end{proof}

We apply the scalar lemma to the difference $h=u-s_t$ introduced in
Section~\ref{sec:common}. In this section the immersion is $C^\infty$.
A finite nonzero secant slope will put a complex gradient of $h$ in
the strict distortion range of Lemma~\ref{lem:sucp}.

\begin{proposition}\label{prop:finite-contact}
Under the hypotheses of Theorem~\ref{thm:slope}, suppose the secant
slope $\sigma$ is finite and nonzero. Then $h=u-s_t$ has a first nonzero
homogeneous Taylor term $P_n$ of integer degree $n\ge3$.
\end{proposition}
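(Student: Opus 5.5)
We argue by contradiction. Since the immersion is $C^\infty$, the function $h=u-s_t$ is smooth, and its Taylor coefficients through degree two vanish. So either $h$ has a first nonzero homogeneous Taylor term $P_n$ with $n\ge3$, which is the claim, or $h$ vanishes to infinite order at the origin. Assume the latter. We will apply Lemma~\ref{lem:sucp} to a complex first derivative $F$ of $h$. The lemma then gives $F\equiv0$ near $0$, hence $Dh\equiv0$ and $h\equiv0$, since $h(0)=0$. Thus $u=s_t$ near $0$ and $A=tI$ on a neighborhood of $p$. This contradicts the assumption that the curvature germ is nonconstant. Note that $F$ inherits infinite-order vanishing from $h$, since every derivative of $h$ has zero Taylor series at $0$.

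The choice of $F$ rests on an identity. For a real symmetric matrix $M$ with eigenvalues $\mu_1,\mu_2$, put
\[
 \delta(M)=\bigl((M_{11}-M_{22})^2+4M_{12}^2\bigr)^{1/2}=|\mu_1-\mu_2|.
\]
Then
\[
 h_{z\bar z}=\tfrac14\tr D^2h,\qquad
 |h_{zz}|=|h_{\bar z\bar z}|=\tfrac14\,\delta(D^2h).
\]
If $\sigma<0$, take $F=h_z$. Then $F_{\bar z}=\tfrac14\tr D^2h$ and $|F_z|=\tfrac14\delta(D^2h)$. If $\sigma>0$, take $F=h_{\bar z}$, which reverses the two roles. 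In both cases $|F|=\tfrac12|Dh|$.

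The secant tangent condition gives the distortion bound. The eigenvalues of $B$ are the components $v_1,v_2$ of $v$, and the condition says $v_2=\sigma v_1+o(|v|)$. Hence $|v_1+v_2|/|v_1-v_2|\to|1+\sigma|/|1-\sigma|$, which is $<1$ for $\sigma<0$. Its reciprocal is $<1$ for $\sigma>0$. In the case $\sigma=1$ we get $|v_1-v_2|=o(|v|)$, so the relevant ratio tends to $0$, and the case $\sigma=-1$ is symmetric. Both limits are invariant under $\sigma\mapsto1/\sigma$, so the labeling plays no role. At points where $v=0$ we have $B=0$ and the inequality is trivial. We therefore obtain a constant $k_0<1$ and a small disk on which, in the case $\sigma<0$,
\[
 |\tr B|\le k_0\,\delta(B),
\]
and in the case $\sigma>0$, $\delta(B)\le k_0|\tr B|$. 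To get this, one uses $|B|\simeq|\tr B|+\delta(B)$ and absorbs the $o(|B|)$ error.

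The remaining step, and the main technical point, is to transfer this inequality from $B$ to $D^2h$ using Lemma~\ref{lem:error}. We have $\tr$ and $\delta$ Lipschitz, so
\[
 |\tr D^2h|\le|\tr B|+C|E|,\qquad \delta(B)\le\delta(D^2h)+C|E|,
\]
with $|E|\le Cr^2|D^2h|+Cr|Dh|$. We also have $|D^2h|\le C(|\tr D^2h|+\delta(D^2h))$. The $r^2|D^2h|$ terms can be absorbed for $r$ small, at the cost of enlarging $k_0$ to some $k<1$. This yields, in the case $\sigma<0$,
\[
 |F_{\bar z}|\le k|F_z|+Cr|Dh|=k|F_z|+2Cr|F|,
\]
and the analogous bound for $\sigma>0$. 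This is \eqref{eq:beltrami-inequality} with the bounded coefficient $b=2Cr$. Since $F$ is $C^1$, Lemma~\ref{lem:sucp} applies and gives the contradiction described above. The delicate points are that the absorption must keep the constant strictly below one, and that the secant limit is needed only near $p$, which is why we shrink the disk.
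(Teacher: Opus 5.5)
Your proof is correct and takes essentially the paper's route. Up to a factor $\tfrac12$, your choices $F=h_z$ for $\sigma<0$ and $F=h_{\bar z}$ for $\sigma>0$ are exactly the paper's $h_x\mp ih_y$. The paper likewise turns the secant condition into a strict comparison between the trace and traceless parts of $B$, transfers it to $D^2h$ through Lemma~\ref{lem:error} by absorbing the $r^2|D^2h|$ term, and then concludes with Lemma~\ref{lem:sucp} using the coefficient $Cr$.
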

\begin{proof}
For a symmetric matrix $M$, write
\[
 a(M)=\tfrac12\tr M,\qquad
 b(M)=\tfrac12(M_{11}-M_{22}+2iM_{12}).
\]
These quantities describe its trace and traceless parts. They satisfy
\[
 |M|^2=2\bigl(|a(M)|^2+|b(M)|^2\bigr).
\]
The eigenvalues of $B$ are $\alpha=\kappa_1-t$ and
$\beta=\kappa_2-t$. In particular
\[
 a(B)=\tfrac12(\alpha+\beta),\qquad
 |b(B)|=\tfrac12|\alpha-\beta|.
\]
If $\sigma>0$, condition~\eqref{eq:tangent} implies
\[
 \frac{|\alpha-\beta|}{|\alpha+\beta|}
 \longrightarrow\frac{|1-\sigma|}{1+\sigma}<1
\]
at nonzero curvature increments. The denominator is nonzero near $p$
on this set, because it is nonzero on the limiting line away from the
origin. Thus $|b(B)|\le k_0|a(B)|$ for some fixed $k_0<1$ after
shrinking the disk. At $B=0$ the same inequality holds.
If $\sigma<0$, the corresponding estimate is
\[
 \frac{|\alpha+\beta|}{|\alpha-\beta|}
 \longrightarrow\frac{|1+\sigma|}{1-\sigma}<1,
 \qquad |a(B)|\le k_0|b(B)|.
\]

Substitute \eqref{eq:shape-error}. In the positive case, norm equivalence
gives, on a disk of radius tending to zero,
\[
 (1-\varepsilon)|b(D^2h)|
 \le(k_0+\varepsilon)|a(D^2h)|+Cr|Dh|,
 \qquad \varepsilon\longrightarrow0.
\]
Choose the radius so that $\varepsilon<(1-k_0)/2$. For
$F=h_x+ih_y$ we have $F_z=a(D^2h)$ and
$F_{\bar z}=b(D^2h)$. Dividing by $1-\varepsilon$ gives
\begin{equation}\label{eq:gradient-beltrami}
 |F_{\bar z}|\le k|F_z|+Cr|F|,\qquad k<1.
\end{equation}
In the negative case the same absorption interchanges $a$ and $b$.
Taking $F=h_x-ih_y$ then gives
$F_{\bar z}=a(D^2h)$ and $F_z=\overline{b(D^2h)}$, so
\eqref{eq:gradient-beltrami} again holds.

If $h$ had no nonzero Taylor term, smoothness would make $h$ and its
first derivatives vanish to infinite order. Lemma~\ref{lem:sucp}, with
the locally bounded coefficient $Cr$, would force $Dh=0$ near $0$.
Since $h(0)=0$, this would give $u=s_t$ there, contradicting the
nonconstant curvature germ. Thus a first term exists, and its degree
is at least three because the Taylor coefficients through degree two
already vanish.
\end{proof}

\subsection{The secant slope theorem}\label{sec:main-proof}

The geometric estimate and polynomial classification now apply to the
same first Taylor term. A symmetric identity allows us to pass to that
term without differentiating the principal curvature labels.

\begin{proof}[Proof of Theorem~\ref{thm:slope}]
Write $\sigma$ for the secant slope. The values $0,\infty,-1$ are already
in $\Slopes$, so assume $\sigma\in\R\setminus\{0,-1\}$.
Proposition~\ref{prop:finite-contact} gives a first nonzero homogeneous
term $P_n$ of $h=u-s_t$, with $n\ge3$. Taylor's theorem with derivative
control and Lemma~\ref{lem:error} give
\begin{equation}\label{eq:leading-shape}
 B=D^2P_n+O(r^{n-1}).
\end{equation}
Indeed, $Dh=O(r^{n-1})$, $D^2h=D^2P_n+O(r^{n-1})$, and the two
error terms in \eqref{eq:shape-error} are $O(r^n)$.

For $\alpha=\kappa_1-t$ and $\beta=\kappa_2-t$, the finite slope
condition is
$\beta-\sigma\alpha=o((\alpha^2+\beta^2)^{1/2})$ at nonzero increments.
The exact symmetric identity
\begin{equation}\label{eq:symmetric-residual}
 (\sigma+1)^2\det B-\sigma(\tr B)^2
   =(\beta-\sigma\alpha)(\alpha-\sigma\beta)
\end{equation}
therefore has right-hand side $o(|B|^2)$. At $B=0$ both sides vanish,
so this estimate holds throughout the shrinking neighborhood.
For any unit vector $e\in\R^2$, evaluate at $(x,y)=re$ and divide
by $r^{2n-4}$. Equation~\eqref{eq:leading-shape} and
$|B|=O(r^{n-2})$ yield
\[
 (\sigma+1)^2\det D^2P_n(e)=\sigma(\tr D^2P_n(e))^2.
\]
This argument remains valid when $D^2P_n(e)=0$; no division by that
matrix or by either eigenvalue has been made. Homogeneity extends
the identity to the plane. Lemma~\ref{lem:polynomial} gives
$\sigma=2m-1$ or its reciprocal for some integer $m\ge2$, proving the result.
\end{proof}

For comparison with formulations using only nonumbilic points, we record
the following equivalence. It also explains why open pieces of a sphere
cannot hide a different limiting direction.

\begin{lemma}\label{lem:nonumbilic}
Let $X$ be a $C^3$ surface immersion in $\R^3$, let $p$ be an interior
umbilic with nonconstant curvature germ, and choose continuous principal
curvature labels near $p$. For a fixed line $L$, condition
\eqref{eq:tangent} holds if and only if it holds when $q$ is restricted
to nonumbilic points.
\end{lemma}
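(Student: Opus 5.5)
The forward implication is immediate, since restricting $q$ to nonumbilic points only removes terms from the limit in \eqref{eq:tangent}. For the converse I would assume the restricted limit holds and fix $\varepsilon>0$. Then choose $\delta>0$ so that $\dist(v(q'),L)\le\varepsilon|v(q')|$ for every nonumbilic $q'$ in the coordinate disk $D_\delta$ of radius $\delta$ about $p$. The task is to prove the same inequality at every umbilic $q\in D_\delta$ with $v(q)\ne0$. At such a point $\kappa_1(q)=\kappa_2(q)=\lambda\ne t$. I split into two cases according to whether $q$ is a limit of nonumbilic points.

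\emph{Case 1: $q$ lies in the closure of the nonumbilic set.} Pick nonumbilic points $q_j\to q$ inside $D_\delta$. The labels $\kappa_i$ are continuous, so $v(q_j)\to v(q)\ne0$. Both sides of $\dist(v,L)\le\varepsilon|v|$ are continuous in $v$, so the inequality passes to the limit at $q$. No rate is needed here, because the bound at the $q_j$ is uniform on $D_\delta$.

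\emph{Case 2: $q$ is an interior point of the umbilic set.} Let $U$ be the connected component of the interior of the umbilic set in $D_\delta$ that contains $q$. Since the immersion is $C^3$, the Codazzi equations apply on the open totally umbilic set $U$. The classical argument then shows that the common principal curvature is locally constant, hence equal to $\lambda$ on the connected set $U$, and by continuity on $\overline U\cap D_\delta$. This Codazzi step is the only place where the $C^3$ hypothesis is used, and I expect it to be the main technical point. In particular it is why open spherical pieces cannot carry a different limiting direction.

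Next I would show that $U\ne D_\delta$. If $U=D_\delta$, then $p\in U$, so the curvature at $p$ would equal $\lambda\ne t$; this contradicts $A(p)=tI$. Alternatively, $U=D_\delta$ would make the germ constant. Because $D_\delta$ is connected and $U$ is a nonempty open proper subset, $U$ has a boundary point $q'\in D_\delta$. This $q'$ cannot be an interior umbilic point: a small umbilic disk around $q'$ would meet $U$, and connectedness would then place $q'$ in $U$. Hence $q'$ lies in the closure of the nonumbilic set, and continuity gives $v(q')=(\lambda-t,\lambda-t)=v(q)$. Case~1 applied at $q'\in D_\delta$ yields the inequality at $q'$, and therefore at $q$. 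Letting $\varepsilon\to0$ gives \eqref{eq:tangent} over all $q$ with $v(q)\ne0$.
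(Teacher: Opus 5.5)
Your proof is correct and uses the same ingredients as the paper's proof. Codazzi makes the common curvature constant on each component of the interior of the umbilic set, a boundary point of such a component lies in the closure of the nonumbilic set with the same increment $v$, and continuity then transfers the nonumbilic estimate. The only difference is organizational: you take components relative to $D_\delta$ in an $\varepsilon$--$\delta$ argument, so the boundary point automatically stays in $D_\delta$, whereas the paper works with sequences in a fixed disk and follows the segment toward $p$ to its first exit, which keeps the new point close to $p$.
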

\begin{proof}
Only the reverse implication requires proof. Work in a coordinate disk
centered at $p$. Let $Z$ be its closed set of umbilics and $U$ the
interior of $Z$. On $U$ the shape operator is $HI$; the Euclidean
Codazzi identity gives
$(VH)W=(WH)V$ for tangent fields $V,W$. Taking two independent
orthonormal directions shows $dH=0$. Thus $H$ is constant on each
connected component of $U$.

Consider an umbilic sequence $q_j\to p$ with $v(q_j)\ne0$.
If $q_j\notin U$, set $r_j=q_j$. Otherwise let $U_j$ be the component
of $U$ containing $q_j$, and let its mean curvature be $h_j$.
Then $h_j\ne t$. Follow the coordinate segment from $q_j$ to $p$
until its first exit from $U_j$. This exit occurs before $p$, since
continuity at $p$ would otherwise imply $h_j=t$. Call the exit point
$r_j$. It satisfies $|r_j-p|\le|q_j-p|$ and
$v(r_j)=v(q_j)=(h_j-t,h_j-t)$.
Every such $r_j$ is in the closure of the nonumbilic set. Indeed, a
connected umbilic neighborhood of $r_j$ would lie in $U$ and meet
$U_j$, putting $r_j$ in that same open component, a contradiction.
The same closure assertion holds when $r_j=q_j\notin U$.

By continuity choose a nonumbilic $x_j$ so close to $r_j$ that
\[
 |x_j-r_j|<1/j,\qquad
 |v(x_j)-v(r_j)|<|v(r_j)|/j.
\]
Then $x_j\to p$ and, for $j>1$, its increment is nonzero. The
normalized directions of $v(x_j)$ and $v(q_j)$ differ by at most
$2/j$. Thus convergence to $L$ on the nonumbilic set implies the
same convergence on the chosen umbilic sequence. Combining the two
types of sequence proves \eqref{eq:tangent}.
Finally, nonconstant curvature implies that nonumbilic points
accumulate at $p$: otherwise the Codazzi argument on a whole disk
would force $A=tI$ there. The condition is therefore not vacuous.
\end{proof}

\begin{corollary}\label{cor:curvature-relation}
In the setting of Theorem~\ref{thm:slope}, suppose a continuous labeling
satisfies $W(\kappa_1,\kappa_2)=0$ near $p$, where $W$ is $C^1$
near $(t,t)$ and $dW(t,t)\ne0$. Then its secant tangent exists and
has slope in $\Slopes$.
\end{corollary}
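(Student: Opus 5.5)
The plan is to show that the relation $W(\kappa_1,\kappa_2)=0$ forces the curvature increment $v$ to approach the tangent line of the level curve $\{W=0\}$ at $(t,t)$. Once that line is a secant tangent in the sense of \eqref{eq:tangent}, Theorem~\ref{thm:slope} places its slope in $\Slopes$.

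\textbf{Step 1 (candidate line).} Write $dW(t,t)=(W_1,W_2)\ne(0,0)$ and let
\[
L=\{(\xi,\eta):W_1\xi+W_2\eta=0\}.
\]
This is a line through the origin. If $W_2=0$ it is vertical, with slope $\infty$. Otherwise its slope is $-W_1/W_2$.

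\textbf{Step 2 (first-order expansion of $W$).} Since $W$ is $C^1$ and $W(t,t)=0$, Taylor's theorem at $(t,t)$ gives
\[
0=W(\kappa_1,\kappa_2)=W_1\alpha+W_2\beta+o(|v|),\qquad v=(\alpha,\beta)=(\kappa_1-t,\kappa_2-t).
\]
The $o(|v|)$ is valid as $q\to p$ because continuity of the labeling forces $v(q)\to0$. Then
\[
\dist(v,L)=\frac{|W_1\alpha+W_2\beta|}{|dW(t,t)|}=o(|v|),
\]
so the limit in \eqref{eq:tangent} is zero. The nonzero gradient is used here, to divide by $|dW(t,t)|$. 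The germ is nonconstant by the hypotheses of Theorem~\ref{thm:slope}, so the limit is not vacuous and $L$ is the unique secant tangent. Theorem~\ref{thm:slope} then applies directly and gives slope in $\Slopes$.

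No step is hard. The only point to check is that $v(q)\to0$ uniformly as $q\to p$, so that the Taylor remainder is controlled. This follows from continuity of the labels at $p$, since both eigenvalues equal $t$ at the umbilic.
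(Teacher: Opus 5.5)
Your proposal is correct and matches the paper's argument. The first-order expansion of $W$ at $(t,t)$ gives $dW(t,t)\cdot v(q)=o(|v(q)|)$, which is exactly \eqref{eq:tangent} for the kernel line of $dW(t,t)$. Theorem~\ref{thm:slope} then places the slope in $\Slopes$.
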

\begin{proof}
The first-order expansion of $W$ gives
\[
 dW(t,t)\cdot v(q)=o(|v(q)|).
\]
The kernel of the nonzero linear functional $dW(t,t)$ is a line,
and this equation is exactly \eqref{eq:tangent} for that line.
Apply Theorem~\ref{thm:slope}.
\end{proof}

\subsection{Regular relations between mean and Gaussian curvature}\label{sec:relation}

A regular relation in $(K,H)$ need not give a regular relation in the
individual principal curvatures at an umbilic. The coordinate change
has a critical point there. We prove a normal form that retains this
possibility and controls the necessary relabeling.

For a nonzero vector $w\in\R^2$, denote its line by $[w]\in\R\mathbb P^1$.
For the ordered curvatures $\kappa_+\ge\kappa_-$, a limiting secant direction
means any limit in $\R\mathbb P^1$ of
$[(\kappa_+(q)-t,\kappa_-(q)-t)]$ along a sequence $q\to p$ with nonzero
increment. Compactness of $\R\mathbb P^1$ guarantees at least one such
limit for a nonconstant germ.

\begin{corollary}\label{thm:relation}
Let $p$ be an interior umbilic of a $C^\infty$ immersed surface in $\R^3$,
with $A(p)=tI$ and nonconstant curvature germ. Suppose that
$\Phi$ is a real $C^\infty$ function on a neighborhood of $(t^2,t)$ in
$\R^2$, that $\Phi(K,H)=0$ near $p$, and that
$d\Phi(t^2,t)\ne0$. Then there is a continuous labeling of the principal
curvatures with a secant tangent at $p$, and its slope belongs to
$\Slopes$. For the ordered curvatures, every limiting secant direction
has slope in $\Slopes$. There are at most two such directions; if there
are two, they are exchanged by interchanging the curvature coordinates.
\end{corollary}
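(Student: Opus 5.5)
The plan is to expand the relation $\Phi(K,H)=0$ to second order in the curvature increments, reduce it to a quadratic form in $v=(\alpha,\beta)$, and then read off the possible secant lines case by case. Throughout, $\alpha=\kappa_+-t$ and $\beta=\kappa_--t$ are the ordered increments.

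\textbf{Expansion.} Write
\[
K-t^2=t(\alpha+\beta)+\alpha\beta,\qquad H-t=\tfrac12(\alpha+\beta).
\]
These expressions are symmetric in $\alpha,\beta$, so any labeling may be used here. Taylor expanding $\Phi$ at $(t^2,t)$ gives
\[
0=\Phi(K,H)=\tfrac12 c\,(\alpha+\beta)+\Phi_K\,\alpha\beta+e\,(\alpha+\beta)^2+O(|v|^3),
\]
where $c=2t\Phi_K+\Phi_H$ and $e=\tfrac12\bigl(t^2\Phi_{KK}+t\Phi_{KH}+\tfrac14\Phi_{HH}\bigr)$, all evaluated at $(t^2,t)$.

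\textbf{Case $c\ne0$.} Then $\alpha+\beta=O(|v|^2)$. Hence every labeling, in particular the ordered one, has secant tangent $\{\beta=-\alpha\}$ of slope $-1\in\Slopes$. This line is swap-invariant, so there is exactly one limiting direction.

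\textbf{Case $c=0$.} Then $\Phi_K\ne0$, because $\Phi_K=0$ would force $\Phi_H=0$, contradicting $d\Phi(t^2,t)\neq0$. We obtain $Q(v)=O(|v|^3)$ with
\[
Q(\alpha,\beta)=e\alpha^2+(2e+\Phi_K)\alpha\beta+e\beta^2 .
\]
This form is nonzero, symmetric under the swap $(\alpha,\beta)\mapsto(\beta,\alpha)$, and has discriminant $\Phi_K(4e+\Phi_K)$.

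\emph{Negative discriminant.} $Q$ is definite, so $|v|^2=O(|v|^3)$ and $v=0$ near $p$. This contradicts the nonconstant germ.

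\emph{Zero discriminant.} Here $Q=e(\alpha-\beta)^2$ with $e\ne0$, so $\alpha-\beta=O(|v|^{3/2})$. Every labeling then has a secant tangent of slope $1$. Since $1\notin\Slopes$, Theorem~\ref{thm:slope} rules this case out.

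\emph{Positive discriminant.} $Q$ factors into two distinct real lines $L,L'$, and the swap symmetry of $Q$ gives $L'=\mathrm{swap}(L)$. Since $Q(v)=o(|v|^2)$ and $Q$ is bounded below by a multiple of $|v|^2$ off small cones about $L\cup L'$, the direction $[v]$ must approach $L\cup L'$. Fix disjoint closed cones $\Gamma\supset L$ and $\Gamma'=\mathrm{swap}(\Gamma)\supset L'$, meeting only at $0$. On a small punctured disk every nonzero $v$ lies in $\Gamma\cup\Gamma'$.

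\textbf{Constructing the labeling.} Define a labeling by keeping the ordered labels where $v\in\Gamma$ and swapping them where $v\in\Gamma'$. The two prescriptions overlap only where $v=0$, and there they agree.
\begin{itemize}
\item At points with $v\ne0$, continuity of $v$ keeps $v$ locally inside a single cone, so the new labeling is locally either the ordered one or its swap, both continuous.
\item At points with $v=0$, both candidate values are close to $(t,t)$, so the labeling is continuous there too.
\end{itemize}
This labeling has secant tangent $L$, because its increments lie in $\Gamma$ and $\Gamma$ can be shrunk toward $L$. Theorem~\ref{thm:slope} then puts the slope of $L$ in $\Slopes$. The slope of $L'$ is the reciprocal, which also lies in $\Slopes$ since $\Slopes$ is closed under $s\mapsto1/s$ (with $0\leftrightarrow\infty$ and $-1\mapsto-1$). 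Every ordered limiting direction lies in $\{L,L'\}$, so there are at most two, exchanged by the swap.

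\textbf{Special subcase $e=0$.} Then $L,L'$ are the diagonal and antidiagonal, each swap-invariant. The construction above does not apply, since the swap does not exchange the cones, but then no relabeling is needed. Near the diagonal the ordered increments give a slope-$1$ direction. I would show this cannot persist by restricting to a sequence along which $[v]\to$ diagonal and arguing via the proof of Theorem~\ref{thm:slope} on subsequences. The first attempt would be to replace the secant-tangent hypothesis in Proposition~\ref{prop:finite-contact} by the pointwise identity $Q(v)=o(|v|^2)$ fed directly into the symmetric residual \eqref{eq:symmetric-residual}. Since
\[
(\beta-\alpha)(\alpha+\beta)=\det B\text{-type quantity}=o(|B|^2),
\]
the leading Taylor term $P_n$ from the unique continuation step would satisfy $\det D^2P_n=0$ or a trace condition. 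The case $\sigma=1$ of Lemma~\ref{lem:polynomial} is excluded, which should leave only slope $-1$.

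I expect this $e=0$ subcase, where the two admissible lines are both swap-invariant, to be the main obstacle. There the existence of a single tangent is not automatic from the cone decomposition. It needs the finite-contact and Hessian-classification machinery to be run directly from the relation, rather than through Theorem~\ref{thm:slope} as stated.
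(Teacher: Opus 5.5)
Your main line is correct and takes a different route from the paper. However, the paragraph on the ``special subcase $e=0$'' rests on a computational slip and should be deleted.

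For $e=0$ the form is $Q=\Phi_K\alpha\beta$. Its null lines are the coordinate axes $\{\beta=0\}$ and $\{\alpha=0\}$, of slopes $0$ and $\infty$, not the diagonal and antidiagonal. The swap exchanges these two axes, so your cone relabeling applies verbatim, and the resulting slope is $0$ or $\infty$, already in $\Slopes$.

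More generally, no null line of $Q$ is swap-invariant when the discriminant is positive. The only swap-invariant lines are the diagonal and the antidiagonal. On the diagonal $Q=(4e+\Phi_K)\alpha^2$, which vanishes only if the discriminant $\Phi_K(4e+\Phi_K)$ does. On the antidiagonal $Q=-\Phi_K\alpha^2\ne0$. So the ``main obstacle'' you anticipate does not exist.

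The sketch you proposed for that subcase would also have aimed at a false conclusion. Example~\ref{ex:cylinder} has $\Phi=K$ and $t=0$, hence $c=e=0$, and its ordered limiting slopes are $0$ and $\infty$, not $-1$. What the relation gives there is $\alpha\beta=\det B=o(|B|^2)$, the case $\sigma=0$ of \eqref{eq:symmetric-residual}, not a statement about $(\beta-\alpha)(\alpha+\beta)$.

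With that paragraph removed, your proof is complete. The paper instead works in the variables $x=H-t$ and $y=\tfrac12(\kappa_+-\kappa_-)$ and uses the implicit function theorem to get exact normal forms. These are $x=g(y^2)$ when $2t\Phi_K+\Phi_H\ne0$, and $y^2=x^2h(x)$ otherwise. In the second case the paper writes the relabeled curvatures explicitly as $t+x\pm x\sqrt{h(x)}$, which are smooth functions of $H$, and reads off the two ordered directions from the sign of $x$.

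Your discriminant equals $\Phi_K^2\eta$ with $\eta=h(0)$. So your three subcases are exactly the paper's $\eta<0$, $\eta=0$, $\eta>0$, and your cone labeling agrees with the paper's formula up to a global swap. Your version is more qualitative. It uses only the second-order Taylor polynomial, and it makes the reciprocal pairing of the two ordered directions visible as the swap symmetry of a binary quadratic form. The paper's version gives a closed-form labeling as a function of $H$, together with the side ($H>t$ or $H<t$) on which each ordered direction occurs. It also leads directly to the formula $\eta=1+D_1/(2C_1)$ used afterwards.
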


This applies in particular at every nonconstant umbilic germ of a closed
genus-zero surface satisfying the hypotheses. It gives a tangent for
suitable continuous labels, while the ordered image may have two
limiting directions. The local examples below do not decide whether
closedness imposes ordered tangent uniqueness.

\begin{proof}[Proof of Corollary~\ref{thm:relation}]
Start with the ordered curvatures and set
\[
 x=H-t,\qquad y=\tfrac12(\kappa_+-\kappa_-)\ge0.
\]
These symbols are scalar curvature quantities in this subsection, not
coordinates on the surface. They satisfy
\[
 \kappa_\pm-t=x\pm y,\qquad K=t^2+2tx+x^2-y^2.
\]
Introduce an independent real variable $z$ and the smooth function
\[
 \Psi(x,z)=\Phi(t^2+2tx+x^2-z,t+x).
\]
All derivatives of $\Phi$ in this proof are evaluated at $(t^2,t)$.
Write
\[
 A_1=2t\Phi_K+\Phi_H,\qquad C_1=\Phi_K.
\]
Then $\Psi_x(0,0)=A_1$ and $\Psi_z(0,0)=-C_1$. Regularity of
$\Phi$ implies that $A_1,C_1$ are not both zero, and the actual
curvatures satisfy $\Psi(x,y^2)=0$.

If $A_1\ne0$, the implicit function theorem writes this relation
as $x=g(y^2)$, where $g$ is smooth and $g(0)=0$.
Consequently $x=O(y^2)$. A nonzero curvature increment has $y\ne0$,
so $x/y\to0$. Its ordered direction therefore tends to $[1:-1]$,
and the slope is $-1$. The ordered continuous labeling already
proves all assertions in this case.

Suppose now that $A_1=0$. Then $C_1\ne0$, and the implicit function
theorem instead gives $y^2=f(x)$ with $f(0)=f'(0)=0$. Taylor's
integral formula defines a smooth function
\[
 h(x)=\int_0^1(1-s)f''(sx)\,ds,
 \qquad y^2=x^2h(x).
\]
Let $\eta=h(0)$. If $\eta<0$, the last identity forces $x=y=0$
near $p$, contrary to the nonconstant germ. If $\eta=0$, then at
nonzero increments $x\ne0$ and $|y/x|=\sqrt{h(x)}\to0$.
The ordered curvatures have secant slope $1$, contradicting
Theorem~\ref{thm:slope}. Thus $\eta>0$.

Shrink the neighborhood so that $h>0$. Define two functions on the
surface by
\begin{equation}\label{eq:relabel}
 \lambda_1=t+x+x\sqrt{h(x)},\qquad
 \lambda_2=t+x-x\sqrt{h(x)}.
\end{equation}
They are smooth functions of $H$. Their sum is $2H$ and their
product is $K$, so they are precisely the two principal curvatures,
with a possibly different labeling. At every nonzero increment $x\ne0$,
and their secant directions tend to
\[
 [1+\sqrt\eta:1-\sqrt\eta].
\]
In particular this labeling has a unique finite slope
\begin{equation}\label{eq:normal-slope}
 s=\frac{1-\sqrt\eta}{1+\sqrt\eta}.
\end{equation}
By Theorem~\ref{thm:slope}, $s\in\Slopes$. If $\eta>1$, then
$-1<s<0$, which is impossible. If $\eta=1$, then $s=0$.
If $0<\eta<1$, then $0<s<1$, so
\begin{equation}\label{eq:eta}
 s=(2j+1)^{-1},\qquad
 \eta=\frac{j^2}{(j+1)^2}
 \quad\text{for some integer }j\ge1.
\end{equation}

For the ordered labels, $y=|x|\sqrt{h(x)}$. On the set $x>0$ their
secant direction tends to $[1+\sqrt\eta:1-\sqrt\eta]$, and on
$x<0$ it tends to $[1-\sqrt\eta:1+\sqrt\eta]$. Every limiting
direction is one of these two: any sequence admits a subsequence
with a fixed sign of $x$. The two lines are exchanged by the
coordinates. Their slopes are $0,\infty$ when $\eta=1$, or the
reciprocal pair in \eqref{eq:eta} when $0<\eta<1$.
Only a side accumulating at $p$ contributes a direction.
This proves the ordered assertion as well as the existence of the
labeling \eqref{eq:relabel}.
\end{proof}

The proof identifies a restriction on the second derivative of the
relation in the degenerate case $A_1=0$. Evaluate all derivatives below
at $(t^2,t)$ and recall $C_1=\Phi_K(t^2,t)\ne0$. With
\[
 D_1=4t^2\Phi_{KK}+4t\Phi_{KH}+\Phi_{HH},
\]
implicit differentiation gives
\[
 \eta=\frac{f''(0)}2=1+\frac{D_1}{2C_1}.
\]
Thus, at a nonconstant umbilic germ with $A_1=0$, this number must
belong to $\{1\}\cup\{j^2/(j+1)^2:j\ge1\}$. This is a necessary
condition on a relation realized by a smooth surface, not an existence
assertion for every relation with such a second derivative.

\subsubsection*{Ordered tangents and local examples}\label{sec:examples}

The normal form distinguishes a unique tangent for suitable labels from
a unique tangent for ordered labels. The following local graphs show
that this distinction occurs for actual immersed surfaces.

\begin{example}\label{ex:cylinder}
Consider $X(u,v)=(u,v,u^3)$ with upward normal. Its first and second
fundamental forms have matrices
\[
 \diag(1+9u^4,1),\qquad
 \diag\bigl(6u/\sqrt{1+9u^4},0\bigr).
\]
The principal curvatures along the parameter directions are
\[
 a(u)=\frac{6u}{(1+9u^4)^{3/2}},\qquad b(u)=0.
\]
The line $u=0$ consists of umbilics with $t=0$. The relation
$\Phi(K,H)=K=0$ is smooth and regular. For $u>0$ the ordered pair
is $(a(u),0)$; for $u<0$ it is $(0,a(u))$. At $(0,0)$ the ordered
curvature image therefore has both horizontal and vertical limiting
secants. With the continuous labels $(a,0)$ it has just the horizontal
secant tangent.
\end{example}

\begin{example}\label{ex:nonzero}
A similar phenomenon occurs at nonzero umbilic curvature. For $r$ near
$1$ let $f(r)=(r-1)+(r-1)^2$ and consider
\[
 X(r,\theta)=(r\cos\theta,r\sin\theta,f(r)).
\]
With upward normal, the meridian and parallel curvatures are
\[
 a(r)=\frac{f''(r)}{(1+f'(r)^2)^{3/2}},\qquad
 b(r)=\frac{f'(r)}{r\sqrt{1+f'(r)^2}}.
\]
At $r=1$ direct differentiation gives
\[
 a(1)=b(1)=t=1/\sqrt2,\qquad
 a'(1)=-3/\sqrt2,\qquad b'(1)=0.
\]
Thus $H'(1)=-3/(2\sqrt2)\ne0$. Let $r=R(H)$ be its local smooth
inverse and put $G(H)=a(R(H))b(R(H))$. The surface satisfies the
regular smooth relation $\Phi(K,H)=K-G(H)=0$, with $\Phi_K=1$.
For $\delta=r-1$,
\[
 a(r)-t=-3\delta/\sqrt2+O(\delta^2),\qquad
 b(r)-t=O(\delta^2).
\]
The ordering switches across $r=1$. The ordered secant directions
are horizontal on one side and vertical on the other, whereas the
labels $(a,b)$ give a single horizontal tangent.
\end{example}

Both examples are local and have nonisolated umbilics. They show that
regularity of $\Phi$ and a nonconstant germ do not by themselves imply
ordered tangent existence. They do not settle whether a global regular
relation on a closed genus-zero surface imposes a further restriction.

The positive values in \eqref{eq:slopes} already occur among local
smooth graphs. For $a\ne0$ and an integer $m\ge2$, take
$u(r)=ar^{2m}$. The meridian and parallel curvatures at the origin
both vanish, while for $r>0$ they are
\[
 \kappa_{\mathrm{mer}}=
 \frac{2m(2m-1)a r^{2m-2}}
 {(1+4m^2a^2r^{4m-2})^{3/2}},\qquad
 \kappa_{\mathrm{par}}=
 \frac{2ma r^{2m-2}}
 {\sqrt{1+4m^2a^2r^{4m-2}}}.
\]
The labeling $(\kappa_{\mathrm{mer}},\kappa_{\mathrm{par}})$ has
secant slope $(2m-1)^{-1}$; exchanging the labels gives $2m-1$.
This verifies local realization of the positive slopes in
Theorem~\ref{thm:slope}. It does not impose a prescribed regular
relation or provide closed realizations.

Finally, infinite differentiability enters when the first nonzero
Taylor term is selected after infinite-order contact has been excluded.
No fixed finite degree is available in Theorem~\ref{thm:slope}.
The argument therefore does not establish the same statement for a
fixed finite differentiability class. Nor does a radial leading term
alone establish rotational symmetry of the surface. For the cubic relation, the coefficient in the homogeneous identity
fixes the possible degree at four. The remaining sections use the exact
equation to control contact beyond that degree, turn the radial term
into a rotational neighborhood, and complete the surface globally.

\section{A nonzero umbilic in every positive-curvature component}\label{sec:existence}
We now turn to the cubic relation \eqref{eq:cubic}. Its global
classification needs a nonzero umbilic as a starting point.
We find one in each positive-curvature component, without assuming
that this component is the whole surface or has a regular boundary.
Throughout this section it suffices that
$X:M\to\R^3$ is a $C^3$ immersion of a nonempty compact smooth
surface without boundary and satisfies \eqref{eq:cubic}.

\begin{lemma}\label{lem:positive}
Under the preceding $C^3$ hypotheses, one has $c>0$ and $K\geq0$.
Each connected component of $M$ contains a point with $K>0$.
\end{lemma}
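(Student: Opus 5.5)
We need to prove: on a compact immersed surface satisfying the cubic relation, $c>0$ and $K\ge0$, and each component has a point with $K>0$.

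\textbf{Step 1: an elliptic point on each component.} Fix a connected component $M_0$, which is compact, and choose a point $q$ where $|X|^2$ attains its maximum on $M_0$. The standard comparison with the circumscribed sphere of radius $R=|X(q)|>0$ shows that at $q$ both principal curvatures have the same sign, for the inward normal, and have absolute value at least $1/R$. Hence $K(q)\ge 1/R^2>0$ and $\kappa_1\kappa_2>0$ there. Indeed, the second fundamental form of the touching sphere bounds the second fundamental form of the surface from below, since the function $|X|^2-R^2\le0$ has a local maximum at $q$. This step needs only $C^2$.

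\textbf{Step 2: the sign of $c$.} At $q$ the relation gives, in some order, $\kappa_2=c\kappa_1^3$ with $\kappa_1\ne0$. Then $\kappa_1\kappa_2=c\kappa_1^4>0$, which forces $c>0$. (If $c=0$, we would get $\kappa_2=0$, contradicting $K(q)>0$.) This settles $c>0$ and the last assertion of the lemma.

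\textbf{Step 3: $K\ge0$ everywhere.} At any point, the relation in some order gives
\[
 K=\kappa_1\kappa_2=c\kappa_1^4\ge0,
\]
since $c>0$. Here the order of the labels does not matter: the cubic relation \eqref{eq:cubic} says that at each point one of the two factors vanishes, and either choice yields $K=c\,\kappa^4$ for the relevant $\kappa$.

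\textbf{Main obstacle.} The only potential subtlety is Step 1, and in particular making the sphere comparison precise for an immersion that need not be embedded or orientable. I would handle this locally. Near $q$, pick the normal $N=-X/R$ at $q$. Since $f=|X|^2$ has a maximum at $q$,
\[
 \Hess f(v,v)=2|dX(v)|^2+2\langle X,\nabla^2X(v,v)\rangle\le0.
\]
The second fundamental form along $N$ is $\mathrm{II}(v,v)=\langle \nabla^2X(v,v),N\rangle$, so this inequality gives $\mathrm{II}(v,v)\ge |v|^2/R$. Both eigenvalues of the shape operator are therefore at least $1/R$. This argument is local, so orientability and embeddedness are irrelevant.

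The remaining steps are purely algebraic and do not need the $C^3$ hypothesis. That hypothesis is presumably stated for the later parts of the section.
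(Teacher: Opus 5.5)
Your proposal is correct and follows essentially the paper's proof. You maximize the squared distance on each component and use the Hessian inequality at the maximum, with the inward local normal, to get $A\ge R^{-1}\Id$; you then read off $c>0$ and $K=c\kappa^4\ge0$ from the cubic relation. The only differences are cosmetic: you center at the origin, and you write the Hessian through $\nabla^2X$ rather than as $g(V,V)-Rh(V,V)$.
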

\begin{proof}
Fix $a\in\R^3$ and maximize $|X-a|^2/2$ on a connected component. Its
maximum is $R^2/2>0$, since an immersion is not constant. At a maximum $p$,
choose the local normal $N=(a-X(p))/R$. The induced metric $g$ and second
fundamental form $h(V,W)=g(AV,W)$ satisfy
\[
 0\geq\Hess_M(|X-a|^2/2)(V,V)=g(V,V)-Rh(V,V).
\]
Thus $A_p\geq R^{-1}\Id$. Both principal curvatures are positive there,
so the cubic relation forces $c>0$. At any point the same relation gives
$K=c\kappa_i^4\geq0$, for the curvature chosen as its cubic input.
\end{proof}

We replace $X$ by $X/\sqrt c$ until the final rescaling. Its principal
curvatures are multiplied by $\sqrt c$, so the normalized relation has
constant $1$. Write $\Omega=\{K>0\}$. On $\Omega$ there is a unique local
normal for which $A$ is positive definite. These normals agree on overlaps
and orient each component of $\Omega$. There is a positive $C^1$ function
$t$ on $\Omega$ such that
\begin{equation}\label{eq:t-spectrum}
 \spec A=\{t,t^3\},\qquad K=t^4,\qquad \tr A=t+t^3.
\end{equation}
Indeed, $s\mapsto s+s^3$ has positive derivative. In particular, $t=1$
is precisely the nonzero umbilic condition.

The mean curvature vector $\mathbf H=(\tr A)N/2$ is a globally defined
 $C^1$ $\R^3$-valued function, even if $M$ is not orientable. Define
$\tau:M\to[0,\infty)$ by $\tau+\tau^3=2|\mathbf H|$. This function is
continuous, agrees with $t$ on $\Omega$, and vanishes exactly off $\Omega$:
the cubic relation makes $K=0$ equivalent to $A=0$. Compactness gives
\begin{equation}\label{eq:gradient-bound}
 |\nabla t|\leq L:=2\sup_M\|D\mathbf H\|<\infty
 \quad\hbox{on }\Omega.
\end{equation}
Here $D\mathbf H$ is the differential into $\R^3$, with operator norm
for the induced metric. To prove the bound, differentiate $t+t^3=2|\mathbf H|$
at points where $\mathbf H\ne0$ and use $1+3t^2\geq1$.
Only continuity of $\tau$ is used at its zero set.

We record the local structure equations for later use. On a nonumbilic
part of $\Omega$, choose a positively oriented $C^1$ orthonormal principal frame
$e_1,e_2$ with eigenvalues $t,t^3$ and dual forms $\theta_1,\theta_2$.
Set $D=1-t^2$, $p=e_1t$, $q=e_2t$, and define the connection form by
$\nabla e_1=\omega e_2$, where $\omega=\alpha\theta_1+\beta\theta_2$.
The two components of the Codazzi equation give
\begin{equation}\label{eq:codazzi}
 \alpha=\frac{q}{tD},\qquad \beta=\frac{3tp}{D},\qquad
 dt\wedge\omega=\frac{3tp^2-q^2/t}{D}\dd\mu.
\end{equation}
Here $d\mu=\theta_1\wedge\theta_2$ is the oriented area form.
The two sides of Codazzi have components
$q e_1+\beta(t-t^3)e_2$ and
$\alpha(t-t^3)e_1+3t^2p e_2$. The induced metric is $C^2$, whereas the principal connection form
$\omega$ is in general only continuous. The following weak form of the
Gauss equation is enough for both the cutoff argument and the later
principal-coordinate calculation.

\begin{lemma}\label{lem:weak-gauss}
On the positive nonumbilic set, the connection form satisfies
\begin{equation}\label{eq:connection-curvature}
 d\omega=-K\dd\mu
\end{equation}
in the sense of distributions. If $f$ is $C^1$, then
$d(f\omega)=df\wedge\omega-fK\dd\mu$. A compactly supported continuous
one-form whose distributional exterior derivative is continuous has
integral of that exterior derivative equal to zero.
\end{lemma}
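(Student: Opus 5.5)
The plan is to reduce everything to the classical Gauss equation for a $C^2$ metric by smoothing the only rough object, the principal frame, while keeping the metric fixed.

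\emph{Setup on a chart.} Work locally on a coordinate disk $U$ in the positive nonumbilic set. The induced metric $g$ is $C^2$, so Gram--Schmidt applied to the coordinate fields gives a $C^2$ orthonormal frame $f_1,f_2$ with the same orientation as $e_1,e_2$. Its connection form $\omega_0$ is $C^1$, and the classical Cartan structure equation gives $d\omega_0=-K\,d\mu$ pointwise. This uses only two derivatives of $g$, via the Gauss equation, which holds because the immersion is $C^3$.

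\emph{Comparing the two frames.} The principal frame differs from $f_1,f_2$ by a rotation angle $\varphi$, with $e_1=\cos\varphi f_1+\sin\varphi f_2$. The angle $\varphi$ is defined locally because $U$ is a disk and the frames are continuous. The standard computation gives $\omega=\omega_0+d\varphi$. The point is that $\varphi$ is only $C^1$ at best, and in general only continuous: on the nonumbilic set $A$ is $C^1$ with distinct eigenvalues, so $e_1$ is $C^1$ and $\varphi$ is $C^1$. Even when $\varphi$ is merely continuous, the identity $\omega=\omega_0+d\varphi$ holds in the distributional sense. Since $d(d\varphi)=0$ distributionally, symmetry of mixed distributional derivatives gives $d\omega=d\omega_0=-K\,d\mu$ as distributions. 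This proves the first claim; patching over charts is immediate because distributions are local.

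\emph{Product rule.} For $f\in C^1$, write $f\omega=f\omega_0+f\,d\varphi$. The first term is $C^1$ and obeys the classical rule. For the second, I would mollify $\varphi$ to $\varphi_\varepsilon\to\varphi$ in $C^0$ (or in $C^1$ when $\varphi$ is $C^1$). Then
\[
 d(f\,d\varphi_\varepsilon)=df\wedge d\varphi_\varepsilon,
\]
and passing to the limit in distributions gives $d(f\,d\varphi)=df\wedge d\varphi$. When $\varphi$ is only continuous, the right side must itself be read as a distribution, namely $-d(\varphi\,df)$ with $df$ of class $C^0$. Since $\omega$ is assumed continuous, $\varphi$ is $C^1$, and this subtlety disappears. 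Summing the two pieces yields $d(f\omega)=df\wedge\omega-fK\,d\mu$.

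\emph{Vanishing integral.} Let $\eta$ be a compactly supported continuous one-form with continuous distributional $d\eta$. Test $d\eta$ against the constant function $1$; this is legitimate after multiplying by a cutoff $\chi\in C_c^\infty$ equal to $1$ on $\operatorname{supp}\eta$. By definition,
\[
 \int\chi\,d\eta=-\int d\chi\wedge\eta,
\]
and the right side is $0$ because $d\chi=0$ on the support of $\eta$. A partition of unity handles a support that spans several charts.

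\emph{Main obstacle.} I expect the main difficulty to be justifying the regularity of the angle $\varphi$ and the distributional manipulations with only a $C^2$ metric. Mollification in charts, with constants controlled on compact subsets, handles this.
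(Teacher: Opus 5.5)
Your proposal is correct and follows essentially the same route as the paper: a Gram--Schmidt reference frame with $C^1$ connection form $\omega_0$, the decomposition $\omega=\omega_0+d\varphi$ with a $C^1$ angle giving $d\omega=d\omega_0=-K\,d\mu$, the product rule by smooth approximation, and the vanishing integral by pairing with a test function equal to one near the support together with a partition of unity. The only blemish is your hedging that $\varphi$ is ``in general only continuous''; for a $C^3$ immersion $A$ is $C^1$ with distinct eigenvalues on this set, so $\varphi$ is always $C^1$ here, as you yourself eventually note.
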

\begin{proof}
In a coordinate disk, apply Gram--Schmidt to a smooth coordinate frame
using the $C^2$ metric. This gives a $C^2$ orthonormal frame with a $C^1$
connection form $\omega_0$. A local $C^1$ angle $\gamma$ rotates this
frame to the principal frame. Thus $\omega=\omega_0+d\gamma$ and
$d\omega=d\omega_0=-K\dd\mu$ distributionally. The last equality is
the classical Gauss equation in the reference frame; for a $C^3$
immersion its intrinsic curvature equals $\det A$.

The product rule follows from the same decomposition. In particular,
$d(f\,d\gamma)=df\wedge d\gamma$, as can also be checked by local
$C^1$ approximation of $f$ and $\gamma$ by smooth functions.
For the last assertion, use a smooth partition of unity on the compact
support of the form. Each resulting coordinate form has compact support;
its distributional derivatives pair to zero with a test function equal
to one near that support. Summing gives the assertion, since the
derivatives of the partition functions cancel.
\end{proof}

\begin{proposition}\label{prop:umbilic}
Every connected component $U$ of $\Omega$ contains a point where $t=1$.
\end{proposition}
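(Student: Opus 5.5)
Suppose, for contradiction, that $U$ is a component of $\Omega$ on which $t\ne1$; by connectedness either $t<1$ on all of $U$ or $t>1$ on all of $U$. Then $U$ consists of nonumbilic points, $D=1-t^2$ has a fixed sign, and on a suitable double cover (or locally, since the principal line fields are well defined away from umbilics) the structure equations \eqref{eq:codazzi} hold. The plan is to integrate the identity
\[
 dt\wedge\omega=\frac{3tp^2-q^2/t}{D}\,d\mu
\]
against cutoffs, after first rewriting it in terms of an exact form. Using Lemma~\ref{lem:weak-gauss}, $d(f(t)\omega)=f'(t)\,dt\wedge\omega-f(t)t^4\,d\mu$. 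Choose $f$ so that the resulting integrand has a sign. A convenient choice is to multiply by a factor that makes the $p^2$ and $q^2$ coefficients have the same sign once the term $-f t^4$ is included; in fact, since $p^2$ and $q^2$ enter with opposite signs, I expect a better route to be a Gauss--Bonnet type argument. On $U$ the line fields are defined, so the index of the principal foliation gives the obstruction. Concretely, I would compare the total curvature $\int_U K\,d\mu=\int_U t^4\,d\mu$ with boundary contributions of $\omega$.

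The key steps are as follows. First, treat the case $U=M_0$, a whole component, where $\Omega$ has no boundary. Here a line field without singularities exists, so the Euler characteristic vanishes. On the other hand, $K>0$ everywhere gives $\int K>0$, and Gauss--Bonnet gives $2\pi\chi>0$, which is a contradiction.

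Second, treat the case where $U$ has nonempty frontier, lying in $\{\tau=0\}$, where $A=0$ and $t\to0$. Then $t<1$ near the frontier, so $t<1$ throughout. Take cutoffs $\phi_\varepsilon=\psi(t/\varepsilon)$, which equal $1$ where $t\ge2\varepsilon$ and vanish where $t\le\varepsilon$. These are $C^1$ and compactly supported in $U$, since $\tau$ is continuous and $M$ is compact. Applying the last assertion of Lemma~\ref{lem:weak-gauss} to $\phi_\varepsilon f(t)\omega$ gives
\[
 0=\int_U \phi_\varepsilon'\,f\,dt\wedge\omega+\int_U\phi_\varepsilon\bigl(f'dt\wedge\omega-f t^4d\mu\bigr).
\]
The cutoff term involves $dt\wedge dt=0$ in its $\phi'$ part. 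More precisely, $d\phi_\varepsilon\wedge\omega=\psi'(t/\varepsilon)\varepsilon^{-1}dt\wedge\omega$, which is controlled by \eqref{eq:gradient-bound} and \eqref{eq:codazzi}. The dangerous quantity is $q^2/(tD)$, which blows up like $1/t$. Choosing $f$ vanishing to sufficiently high order at $t=0$, for example $f=t^k$, should make the boundary term $O(\varepsilon^{k-1}\,\mathrm{area}\{t\le2\varepsilon\})\to0$.

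Third, arrange signs. With $f=g(t)$ and $0<t<1$, the integrand
\[
 g'\frac{3tp^2-q^2/t}{D}-g t^4
\]
must have a strict sign. Since the $p^2$ and $q^2$ terms have opposite signs, no single $g$ works directly. I would therefore instead integrate both components separately, using $\alpha=q/(tD)$ and $\beta=3tp/D$ to form the one-forms $*dt$-type combinations, or exploit orientation reversal $e_1\mapsto e_2$. The honest expectation is that the sign issue is the main obstacle, and the fallback is the Euler-characteristic argument on the closure. One takes the closed set where $t\ge\varepsilon$, smoothed to a regular sublevel set of a mollified $t$, and applies Poincaré--Hopf to the principal line field: the boundary rotation $\int_{\partial}\omega$ is controlled via $\alpha,\beta$ and \eqref{eq:gradient-bound}, while $\int t^4\,d\mu$ stays bounded below. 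Showing that the boundary term tends to a quantity incompatible with index zero is the step I expect to be hardest.
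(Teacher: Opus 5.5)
Your first case is fine. If $U$ has empty frontier, it is a whole compact component carrying a principal line field without singularities, so $\chi(U)=0$. Gauss--Bonnet with $K>0$ then gives $2\pi\chi(U)=\int_U K\dd\mu>0$, a contradiction. The paper argues differently but equivalently. The two admissible oriented principal frames differ by $(e_1,e_2)\mapsto(-e_1,-e_2)$, so $\omega$ is already a global continuous one-form on the oriented set $U$ and no double cover is needed. Lemma~\ref{lem:weak-gauss} then gives $\int_U K\dd\mu=0$.

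The genuine gap is the frontier case $0<t<1$, which you leave unresolved. You correctly observe that no weight $g(t)$ gives the integrand $g'(3tp^2-q^2/t)/D-gt^4$ a sign. Your Poincar\'e--Hopf fallback on smoothed sublevel sets is only sketched, and its decisive boundary estimate is missing.

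The missing idea is that no weight and no two-sided control of the cutoff term are needed. Take $f\equiv1$ and a nondecreasing cutoff $\chi$, zero on $[0,1]$ and one on $[2,\infty)$. Then there is no bulk $f'\,dt\wedge\omega$ term at all, and Lemma~\ref{lem:weak-gauss} with \eqref{eq:codazzi} gives
\[
 \int_U\chi(t/\delta)K\dd\mu=\int_U\frac{\chi'(t/\delta)}{\delta}\,
 \frac{3tp^2-q^2/t}{1-t^2}\dd\mu .
\]
The term $-q^2/(tD)$ that you flagged as dangerous has the favorable sign. Since $\chi'\ge0$ and $D>0$, you can discard it, because only an upper bound on the left side is needed.

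On the support of $\chi'(t/\delta)$ one has $t\le2\delta$, so $3tp^2/\delta\le6L^2$ by \eqref{eq:gradient-bound}. Also $D$ is bounded below by a positive constant there; the paper uses $t\le b<1$ on $U$, from compactness of $\overline U$. Hence the left side is at most a constant times the area of $\{\delta<t<2\delta\}$, which tends to zero. By dominated convergence the same left side tends to $\int_U K\dd\mu>0$, which is the contradiction.

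Your weight $t^k$ was counterproductive. It makes the cutoff term small, but it creates an indefinite bulk term $f'\,dt\wedge\omega$ on all of $U$, and that is exactly the sign obstruction you ran into.
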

\begin{proof}
Suppose that $U$ contains no such point. Then the eigenvalues are distinct
throughout $U$. Two oriented principal frames with the prescribed
eigenvalue order differ only by simultaneous sign reversal, locally
constant on an overlap. This reversal leaves $\omega$ unchanged. Thus
$\omega$ is a global continuous one-form on $U$, although a global principal
vector field need not exist.

We first note that $\overline U\setminus U\subset\{\tau=0\}$. If a
point of $\overline U$ had $K>0$, a connected coordinate neighborhood
inside $\Omega$ would meet $U$ and therefore belong to the same component.
Since $U$ is connected and $t\ne1$, either $t>1$ everywhere or $0<t<1$
everywhere. In the first case the boundary observation makes $U$ closed
in $M$, hence compact without boundary. Lemma~\ref{lem:weak-gauss} and
\eqref{eq:connection-curvature} would give $\int_U K\dd\mu=0$, a contradiction.

In the second case compactness of $\overline U$ gives $t\leq b<1$.
Otherwise a limit point with $\tau=1$ would belong to $U$. Put $d_0=1-b^2>0$.
Choose a nondecreasing smooth function $\chi:[0,\infty)\to[0,1]$, zero
on $[0,1]$ and one on $[2,\infty)$, with $0\leq\chi'\leq C_\chi$.
For each $\delta>0$, the form $\chi(t/\delta)\omega$ has compact support
in $U$, because $\{\tau\geq\delta\}\cap\overline U$ is compact and
disjoint from $\overline U\setminus U$. Its exterior derivative is continuous by Lemma~\ref{lem:weak-gauss}.
Applying that lemma gives
\begin{align}\label{eq:cutoff-stokes}
 I_\delta:=\int_U\chi(t/\delta)K\dd\mu
 &=\int_U\frac{\chi'(t/\delta)}{\delta}
       \frac{3tp^2-q^2/t}{1-t^2}\dd\mu,\\
 0\leq I_\delta
 &\leq\frac{6C_\chi L^2}{d_0}
     \operatorname{Area}\{x\in U:\delta<t(x)<2\delta\}.\label{eq:layer}
\end{align}
The second line discards the negative square and uses $t/\delta\leq2$
where $\chi'$ is nonzero. The boundary-layer indicator tends pointwise
to zero on the finite-area space $U$, so its area tends to zero.
On the other hand, dominated convergence gives
$I_\delta\to\int_U K\dd\mu>0$. This contradiction proves the proposition.
\end{proof}

\section{Local rigidity for the cubic relation}\label{sec:cubic-local}
\subsection{Spherical rigidity from fourth-order contact}
\label{sec:spherical-contact}

We prove that fourth-order contact with the tangent sphere forces local
coincidence, using a support potential. The curvature relation controls
its Hessian distortion, and planar convexity gives the required
fourth-order growth estimate.

Let $D\subset\R^2$ be a plane domain. A continuous map
$G\in W^{1,2}_{\mathrm{loc}}(D,\R^2)$ is called
\emph{sense-preserving quasiregular} if, for some $\mathcal K\ge1$,
\[
 \|DG\|_{\mathrm{op}}^2\le\mathcal K\det DG
 \quad\text{almost everywhere in }D.
\]
We allow constant maps in this definition and specify nonconstancy when
needed.  In complex notation the distortion inequality is equivalent to
$|G_{\bar z}|\leq k|G_z|$, where $k=(\mathcal K-1)/(\mathcal K+1)<1$.
The following gradient version is contained in the proof of
\cite[Theorem~1.2 and Section~3]{IKO2008}.

\begin{lemma}\label{lem:gradient-convexity}
Let $\Omega\subset\R^2$ be an open convex domain and let
$\psi\in C^2(\Omega,\R)$.  If $\nabla\psi$ is nonconstant and
sense-preserving quasiregular, then $\psi$ is strictly convex or strictly
concave on $\Omega$.
\end{lemma}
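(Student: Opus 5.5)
Write $G=\nabla\psi=(\psi_x,\psi_y)$, so $DG=D^2\psi$ is symmetric with eigenvalues $\lambda_1\le\lambda_2$. I would first read the distortion inequality as a statement about the Hessian. It says $\max(|\lambda_1|,|\lambda_2|)^2\le\mathcal K\lambda_1\lambda_2$ almost everywhere, hence everywhere by continuity of $D^2\psi$. Consequently, at every point either $D^2\psi=0$, or the two eigenvalues are nonzero, have the same sign, and satisfy $|\lambda_2|/|\lambda_1|\le\mathcal K$. So $D^2\psi$ is pointwise semidefinite of a definite sign wherever it is nonzero, and it is definite there. The task is to show that this sign does not change across $\Omega$, and that the zero set $Z=\{D^2\psi=0\}=\{\det DG=0\}$ contains no point at which convexity degenerates.

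The key step uses the structure theory of quasiregular maps (Stoilow factorization, as in the proof of Lemma~\ref{lem:sucp}). A nonconstant sense-preserving quasiregular map $G$ on the connected domain $\Omega$ is $\mathcal H\circ\chi$ with $\chi$ quasiconformal and $\mathcal H$ holomorphic and nonconstant. Hence $G$ is discrete and open, and its branch set is discrete. By the area formula and the Lusin property of quasiregular maps, the Jacobian $\det DG$ is positive almost everywhere. The continuous function $\det D^2\psi\ge0$ therefore has a zero set $Z$ of measure zero, and in particular $Z$ has empty interior.

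Next I would establish the sign. On the open set $\Omega\setminus Z$ the trace $\Delta\psi$ is nonzero and continuous. I would aim to show that $\Omega\setminus Z$ is connected, or else argue via $\Delta\psi$ directly. The constraint $|\lambda|^2\le\mathcal K\det$ gives $|D^2\psi|^2\le C(\Delta\psi)^2$, so $\Delta\psi$ vanishes exactly on $Z$. Moreover $\Delta\psi=2\,\mathrm{Re}\,\partial_z G^\ast$ for the complex form $F=\psi_x-i\psi_y$, which satisfies $F_{\bar z}=\tfrac12\Delta\psi$ and $|F_z|\le k|F_{\bar z}|$. Then $\overline F$ is a quasiregular map whose $z$-derivative is the real function $\tfrac12\Delta\psi$.

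I expect the hardest step to be ruling out a sign change of $\Delta\psi$ across $Z$. My first attempt would use openness of $G$ together with degree theory. Near a point $z_0\in Z$, $G$ is locally a branched cover of some order $m\ge1$ and preserves orientation. Now suppose $\psi$ were convex on one side of $Z$ and concave on the other. Then the monotonicity of $\nabla\psi$, namely $\langle G(a)-G(b),a-b\rangle\ge0$ on convex pieces, would flip sign. This should contradict the local degree being positive: a small loop around $z_0$ would have to be mapped with winding number $m>0$, while the sign change forces part of the loop to reverse direction relative to the radial direction. Since $Z$ is discrete (it is contained in the branch set, because $DG$ vanishes there), $\Omega\setminus Z$ is connected. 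As the eigenvalue sign is locally constant on this set, it is constant, say positive. Then $D^2\psi\ge0$ on $\Omega$, and it is positive definite off the discrete set $Z$.

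Finally I would deduce strict convexity. $\psi$ is convex on the convex set $\Omega$. Restricted to any segment, $\psi''$ is nonnegative and vanishes only at finitely many points of the segment, locally. This holds unless the segment lies in $Z$, which is impossible because $Z$ is discrete. Hence $\psi$ is strictly convex along every segment. The concave case follows by replacing $\psi$ with $-\psi$, which is not needed here since orientation is preserved and both signs are allowed. Strict concavity arises symmetrically when the eigenvalues are negative.
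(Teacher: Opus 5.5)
\textbf{The gap is in the sign step.} Your first reductions are correct: the pointwise dichotomy ($D^2\psi=0$, or $D^2\psi$ definite with eigenvalue ratio at most $\mathcal K$) holds, and $Z$ is a null set. The step that carries the lemma is showing the sign cannot change, and you deduce it from connectedness of $\Omega\setminus Z$. Your justification is that $Z$ lies in the branch set because $DG$ vanishes there. That is false: a vanishing derivative does not make a point a branch point.

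For $\psi=|x|^4/4$ one has $\nabla\psi=|x|^2x$, i.e.\ $G(z)=z|z|^2$. This is a quasiconformal homeomorphism (Hessian eigenvalues $3r^2$ and $r^2$, so $\mathcal K=3$), yet $D^2\psi(0)=0$.

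In fact $Z$ need not be discrete at all. Take $\phi(q)=\tfrac12|q|^2+\sum_{k\ge0}2^{-k}|q-q_k|^{4/3}$ with distinct $q_k\to q_0=0$. Each summand has Hessian of condition number at most $3$, so $D^2\phi$ does too, and its smallest eigenvalue tends to $\infty$ at every $q_k$. The Legendre transform $\psi=\phi^*$ is then $C^2$ on $\R^2$, with $D^2\psi=(D^2\phi)^{-1}\circ\nabla\psi$ off the points $\nabla\phi(q_k)$. So $\nabla\psi$ is quasiregular with $\mathcal K=3$, and $D^2\psi$ vanishes exactly at the accumulating points $\nabla\phi(q_k)$.

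So connectedness of $\Omega\setminus Z$ is not established. Your last paragraph (finitely many zeros of $\psi''$ on each segment) rests on the same false claim. That step is easy to repair: if $\psi''$ vanished on a subinterval of a segment, $DG$ would vanish along it and $G$ would be constant there, contradicting discreteness of $G$. The sign step is not repaired this way.

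\textbf{What the degree heuristic actually needs.} Your degree idea is the right instinct, but as written it is not an argument. Making it one requires the ingredient the paper imports: the Alessandrini--Magnanini lemma. It says an isolated critical point of a real $C^1$ function in the plane is either a strict local extremum of index $1$, or has index $1-L\le 0$.

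The paper applies it at every $p\in\Omega$ to $h_p(x)=\psi(x)-\psi(p)-\nabla\psi(p)\cdot(x-p)$. Since $\nabla h_p$ is nonconstant quasiregular, $p$ is an isolated critical point of positive index, so $h_p$ has a strict sign on a punctured neighborhood.

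Knowing this at a single point $z_0\in Z$ would not suffice. A strict local minimum of $h_{z_0}$ does not by itself prevent concave points from accumulating at $z_0$. The paper uses the strict sign at every point of a line. There it says that $g'$ has no local extremum, so $g'$ is strictly monotone. That one-dimensional observation gives both the constant sign and strict convexity, and it avoids any analysis of $Z$.
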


\begin{proof}
We give the argument using the critical-point lemma of
Alessandrini--Magnanini \cite[Lemma~3.1]{AM1992}.
Standard planar quasiregular theory gives discreteness and positive
integer local index for a nonconstant map; see the discussion in
\cite[Section~2]{IKO2008} and the background in \cite{Kangasniemi2021}.
Fix $p\in\Omega$ and put
\[
 h_p(x)=\psi(x)-\psi(p)-\nabla\psi(p)\cdot(x-p).
\]
The map $\nabla h_p=\nabla\psi-\nabla\psi(p)$ is still nonconstant
and quasiregular.  Thus $p$ is an isolated critical point of $h_p$,
and the winding number of $\nabla h_p$ around $p$ is positive.
The cited critical-point lemma states that an isolated critical point
of a real $C^1$ function in the plane either is a strict local extremum
with index $1$, or has index $1-L$ for an integer $L\geq1$.
Consequently, $h_p$ is strictly positive or strictly negative in a
sufficiently small punctured neighborhood of $p$.

These signs cannot vary with $p$.  To see this, restrict $\psi$ to any
line through two distinct points of $\Omega$, writing
$g(t)=\psi(p+tv)$ on an open interval.  At every $t$, the tangent-line
remainder
$g(s)-g(t)-g'(t)(s-t)$ has one strict sign for all sufficiently close
$s\ne t$.  If $g'$ had a local maximum at $t$, integration of
$g'(s)-g'(t)\leq0$ would make this remainder nonpositive on the right
and nonnegative on the left, a contradiction.  A local minimum is
excluded in the same way.  A continuous function without local extrema
is strictly monotone, so $g'$ is strictly increasing or strictly
decreasing.  The local signs at the two chosen points therefore agree.
Convexity of $\Omega$ permits every pair of points to be joined in this
way.  All line restrictions are strictly convex, or all are strictly
concave, as required.
\end{proof}

For $\lambda\ne0$, write $\zeta_\lambda=s_\lambda$ for the tangent
sphere graph defined in Section~\ref{sec:common}. Equality of
fourth-order jets means equality of all derivatives through order
four in these tangent-plane graph coordinates.

\begin{lemma}\label{lem:c4-contact}
Let $X$ be a $C^4$ immersion satisfying \eqref{eq:cubic}, and let $p$
be an umbilic with $A(p)=\lambda\Id$, $\lambda\ne0$.
Write the immersion near $p$ as a tangent-plane graph $u$, with
$u(0)=Du(0)=0$.  If
\[
 D^j(u-\zeta_\lambda)(0)=0\qquad(0\leq j\leq4),
\]
then a neighborhood of $p$ is contained in that fixed tangent sphere.
\end{lemma}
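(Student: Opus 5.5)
The plan is to replace the graph $u$ by a support potential whose gradient is quasiregular, and then to turn the pinching of the curvature ratio near $3$ into an exact fourth-order lower growth bound. Fourth-order contact then forces the potential difference to vanish.

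\emph{Normalization and the potential.} As in Section~\ref{sec:existence}, rescale so that $c=1$. At a nonzero umbilic the relation gives $\lambda^3=\lambda$. Reversing the normal if needed, we take $\lambda=1$, and by continuity $A$ is positive definite near $p$. The Gauss map is then a $C^3$ diffeomorphism onto a neighbourhood of the normal $(0,0,-1)$. Parametrize normals by the gnomonic variable $\xi\in\R^2$ and let $\varphi(\xi)$ be the Legendre transform of $u$; equivalently, $\varphi$ is $\sqrt{1+|\xi|^2}$ times the support function. Then $\varphi\in C^3$, $D^2\varphi(\xi)=(D^2u)^{-1}$ at the corresponding point, and the radii-of-curvature matrix is
\[
 W=\sqrt{1+|\xi|^2}\,S\,D^2\varphi\,S,\qquad S=(I+\xi\otimes\xi)^{1/2}.
\]
Let $\varphi_1$ be the potential of the tangent sphere $\zeta_1$; it satisfies $W\equiv I$. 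Put $\psi=\varphi-\varphi_1$, so that
\[
 W-I=\sqrt{1+|\xi|^2}\,S\,D^2\psi\,S.
\]
The Legendre transform and the sphere potential are explicit, so the hypothesis $D^j(u-\zeta_1)(0)=0$ for $j\le4$ transfers to $D^j\psi(0)=0$ for $j\le4$. Hence $\psi=o(|\xi|^4)$, $D\psi=o(|\xi|^3)$ and $D^2\psi=o(|\xi|^2)$.

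\emph{Quasiregularity.} The relation $\kappa_2=\kappa_1^3$ says the radii are $\rho$ and $\rho^3$, so $W-I$ has eigenvalues $\rho-1$ and $\rho^3-1$. These have the same sign and ratio $\rho^2+\rho+1=3+O(|\rho-1|)$. Since $S$ is positive and $S=I+O(|\xi|^2)$, the congruent matrix $D^2\psi$ is semidefinite with eigenvalue ratio $3+\varepsilon(\xi)$. Here
\[
 \varepsilon(\xi)=O(|\rho-1|+|\xi|^2)=O(|D^2\psi|+|\xi|^2)=O(|\xi|^2).
\]
In particular $\det D^2\psi\ge0$ and $\|D^2\psi\|_{\rm op}^2\le 4\det D^2\psi$ near $0$, whichever sign $D^2\psi$ has. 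So $\nabla\psi$ is a $C^1$ sense-preserving quasiregular map. If it is constant, then it is zero, so $\psi\equiv0$ and the surface coincides with the sphere near $p$, as claimed. Otherwise Lemma~\ref{lem:gradient-convexity} makes $\psi$ strictly convex or strictly concave on a small disk. Replacing $\psi$ by $-\psi$ if needed, assume it is convex. Then $\psi>0$ off $0$ and $D^2\psi\ge0$, with eigenvalues $\mu\le\nu=(3+\varepsilon)\mu$.

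\emph{Exact fourth-order growth (main obstacle).} I aim to show that some extremal circle quantity obeys a differential inequality whose exponent is $4+O(r^2)$; the target is $\psi(\xi)\ge c|\xi|^4$. Because $\int_0 O(r^2)\,dr/r<\infty$, the perturbed exponent costs only a bounded multiplicative constant, and this is exactly what makes the estimate exact rather than $r^{4\pm\delta}$. That lower bound contradicts $\psi=o(|\xi|^4)$.

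The candidate quantity is $M(r)=\max_{|\xi|=r}\psi$, or alternatively the circular mean. At a maximizing point on the circle, $\psi_\theta=0$ and $\psi_{\theta\theta}\le0$, so the tangential Hessian entry is at most $\psi_r/r$. The radial entry is then controlled from below, through the pinched eigenvalues and the trace $\mu+\nu=(4+\varepsilon)\mu$, in terms of $\psi_r/r$. This should give an inequality of the form $rM''\ge(3-Cr^2)M'$ in the viscosity or Dini sense, together with $M'>0$. Integrating twice then yields $M(r)\ge c r^4$.

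The delicate point is that a $\max$ argument naturally bounds growth from above. Getting the lower bound may require the minimum over circles instead, or the circular average $\bar\psi(r)$, using $\Delta\psi=(4+\varepsilon)\mu$ and $\det D^2\psi=(3+\varepsilon)\mu^2$ to relate $\bar\psi''$ and $\bar\psi'/r$. I expect this step, choosing the right circle functional so that the pinching $\nu/\mu=3+O(r^2)$ closes into an inequality with exponent exactly $4$, to be the main difficulty. It is the growth estimate the paper credits to \cite{IKO2008,AM1992}.
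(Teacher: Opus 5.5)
\textbf{The fourth-order growth step is missing, and the inequality you aim for points the wrong way.}

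Your reduction is correct and follows the paper's route: the Legendre support potential, $\psi=o(|\xi|^4)$, the congruence formula, the pinching $\nu\le(3+Cr^2)\mu$ from $|\rho-1|\lesssim|D^2\psi|=o(|\xi|^2)$, quasiregularity of $\nabla\psi$, and the convex/concave alternative from Lemma~\ref{lem:gradient-convexity}. You obtain $\psi=o(|\xi|^4)$ by transferring jets, where the paper compares minimum values. That works, but note $\varphi\in C^4$, not merely $C^3$, since $(Du)^{-1}\in C^3$; you need this for $D^4\psi(0)=0$.

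The gap is the growth estimate itself, which is the whole content of the lemma and which you leave open. The one concrete inequality you propose, $rM''\ge(3-Cr^2)M'$, is reversed. Integrating $(\log M')'\ge(3-Cr^2)/r$ from $r$ to a fixed $\rho$ gives $M'(r)\le C'r^3$, hence $M(r)\le C''r^4$. That is an upper bound, satisfied for instance by $M=r^6$, so it cannot contradict $\psi=o(|\xi|^4)$. To exclude $o(r^4)$ you need $rM''\le(3+Cr^2)M'$.

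Your max-point reasoning also does not give a lower bound on the radial entry. There, $\psi_{\theta\theta}\le0$ bounds the tangential entry from above, and combined with the pinching this bounds $\psi_{rr}$ from above, not from below.

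\textbf{How to close it.} The upper bound is exactly what you want, and it holds at every point, with no special choice of point on the circle. For convex $\psi$, every Rayleigh quotient of $D^2\psi$ lies in $[\mu,\nu]$, so
\[
 \psi_{rr}\le\nu\le(3+Cr^2)\mu\le(3+Cr^2)\Bigl(\frac{\psi_r}{r}+\frac{\psi_{\theta\theta}}{r^2}\Bigr)
\]
everywhere, trivially also where $D^2\psi=0$.

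Now average over the full circle. The $\psi_{\theta\theta}$ term integrates to zero, giving $m''\le(3/r+Cr)m'$ for the circular mean $m$.

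Convexity with $\psi(0)=D\psi(0)=0$ gives $\psi_r\ge\psi/r>0$, hence $m'>0$. Integrating the logarithmic derivative from $r$ to a fixed $\rho$ gives
\[
 m'(r)\ge m'(\rho)(r/\rho)^3e^{-C\rho^2/2}.
\]
Hence $m(r)\ge c_0r^4$, which contradicts $\psi=o(|\xi|^4)$.

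This elementary ODE argument is the paper's growth estimate. The references \cite{IKO2008,AM1992} enter only through the convexity lemma, not through the growth step.
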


\begin{proof}
At the umbilic, \eqref{eq:cubic} gives $c\lambda^2=1$.
After choosing the normal and rescaling, we may assume
$c=\lambda=1$.  Write
\[
 \zeta(x)=1-\sqrt{1-|x|^2},\qquad \eta=u-\zeta.
\]
Taylor's theorem gives $D^j\eta=o(|x|^{4-j})$ for $0\leq j\leq4$.
Both $u$ and $\zeta$ are strictly convex on a fixed small disk.

Use the slope coordinates $y=-Du(x)$.  Since $D^2u(0)=\Id$, they
have a local inverse $x=x(y)$.  Restrict to a full disk $B_R$ in
the $y$-plane, and set
\[
 r=|y|,\qquad w=\sqrt{1+r^2},\qquad
 U(y)=x(y)\cdot y+u(x(y))-1,\qquad \phi=U+w.
\]
Differentiation gives $DU=x$, so $\phi$ is at least $C^2$ and
$\phi(0)=D\phi(0)=0$.  The corresponding inverse and potential for
the sphere are $x_s(y)=-y/w$ and $U_s=-w$.
The points $x(y)$ and $x_s(y)$ minimize, respectively,
$u(x)+y\cdot x$ and $\zeta(x)+y\cdot x$ in the fixed convex disk.
Comparing their minimum values yields
\begin{equation}\label{eq:support-contact-comparison}
 \eta(x(y))\leq\phi(y)\leq\eta(x_s(y)).
\end{equation}
Both inverse maps are $O(r)$ at the origin.  Hence
\begin{equation}\label{eq:support-small-fourth}
 \phi(y)=o(r^4),
\end{equation}
uniformly in the angular variable.

We next express the Hessian in terms of the shape operator.
The unit normal in these coordinates is $n=(y,1)/w$.
Let $P=\Id-n\otimes n$ be orthogonal projection onto its tangent
plane, and define $L:\R^2\to n^\perp$ by $L\xi=P(\xi,0)$.
Then
\[
 dn=\frac{L}{w},\qquad
 L^{\mathsf T}L=\Id-\frac{y\otimes y}{w^2},\qquad
 \spec(L^{\mathsf T}L)=\{1,w^{-2}\}.
\]
The convention $A=-dN$ gives $dX=-w^{-1}A^{-1}L$.
Taking its scalar product with $L\xi$ extracts the horizontal
derivative $Dx=D^2U$.  Since $D^2w=w^{-1}L^{\mathsf T}L$, we obtain
\begin{equation}\label{eq:support-hessian-congruence}
 D^2\phi=w^{-1}L^{\mathsf T}(\Id-A^{-1})L.
\end{equation}

Shrink the disk so that $A$ is positive.  Define $t>0$ by
$\tr A=t+t^3$; the unordered relation \eqref{eq:cubic} then gives
$\spec A=\{t,t^3\}$.  The eigenvalues of $\Id-A^{-1}$ satisfy
\[
 1-t^{-3}=q(t)(1-t^{-1}),\qquad
 q(t)=1+t^{-1}+t^{-2}>1.
\]
Thus $D^2\phi$ is either zero or definite at every point, and in the
latter case its ratio of largest to smallest absolute eigenvalue obeys
\begin{equation}\label{eq:support-distortion}
 \operatorname{cond}(D^2\phi)\leq(1+r^2)q(t).
\end{equation}
The graph formula
$A=(\Id+Du\otimes Du)^{-1}D^2u/\sqrt{1+|Du|^2}$,
expressed in graph coordinates, and the estimates on $D\eta,D^2\eta$
give $A-\Id=o(|x|^2)$.  Since the derivative of $t+t^3$ at $1$
is $4$, it follows that $t-1=o(r^2)$.  Consequently, for a fixed
$C\geq0$ and a smaller disk,
\begin{equation}\label{eq:support-critical-distortion}
 \operatorname{cond}(D^2\phi)\leq3+Cr^2
 \qquad\text{whenever }D^2\phi\ne0.
\end{equation}

The gradient map $G=\nabla\phi$ is sense-preserving quasiregular.
Indeed, $DG=D^2\phi$ is definite or zero, and wherever $D^2\phi\ne0$,
\[
 \|DG\|_{\mathrm{op}}^2
 \leq \operatorname{cond}(D^2\phi)\det DG.
\]
The same inequality is automatic where $D^2\phi=0$, while
\eqref{eq:support-critical-distortion} supplies a fixed distortion bound.
If $G$ is constant, the normalization gives $\phi=0$.
Otherwise Lemma~\ref{lem:gradient-convexity} allows us to choose
$v=\phi$ or $v=-\phi$ strictly convex on $B_R$.
Then $v(0)=Dv(0)=0$, $v>0$ away from $0$, and $D^2v\geq0$.

Let
\[
 m(r)=\frac1{2\pi}\int_0^{2\pi}v(r\cos\theta,r\sin\theta)\dd\theta.
\]
Convexity gives $v_r\geq v/r>0$, hence $m'>0$.
Applied to the radial and tangential unit vectors,
\eqref{eq:support-critical-distortion} gives
\[
 v_{rr}\leq(3+Cr^2)
       \left(\frac{v_r}{r}+\frac{v_{\theta\theta}}{r^2}\right).
\]
This quadratic-form inequality also holds when the Hessian is zero.
Integrating over the full circle removes the angular derivative and
yields
\[
 m''\leq\left(\frac3r+Cr\right)m'.
\]
Fix $0<\rho<R$.  Integration of the logarithmic derivative from
$r$ to $\rho$ gives
\[
 m'(r)\geq m'(\rho)\left(\frac r\rho\right)^3
          \exp\!\left[-\frac C2(\rho^2-r^2)\right]
       \geq c_0r^3,
 \qquad
 c_0=\frac{m'(\rho)}{\rho^3}e^{-C\rho^2/2}>0.
\]
Since $m(0)=0$, another integration gives $m(r)\geq c_0r^4/4$,
contradicting \eqref{eq:support-small-fourth}.
Only the constant-gradient alternative is possible, so $\phi=0$.
Now $U=-w$ and $x=DU=-y/w$ reconstruct
$u=U-y\cdot DU+1=1-w^{-1}$, precisely the sphere graph $\zeta$.
Undoing the normalization proves the lemma.
\end{proof}

\begin{proposition}\label{prop:sphere-propagation}
Let $M$ be connected and without boundary, and let
$X\in C^4(M,\R^3)$ be an immersion satisfying \eqref{eq:cubic}.
If the image of a nonempty open subset of $M$ lies in a fixed round
sphere $\Sigma$, then $X(M)\subset\Sigma$.
If $M$ is compact, then $X(M)=\Sigma$.
\end{proposition}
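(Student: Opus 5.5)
A connectedness argument on $M$ should suffice. Let
\[
 W=\{x\in M:\ X(V)\subset\Sigma \text{ for some open neighborhood } V \text{ of } x\}.
\]
By definition $W$ is open, and it is nonempty by hypothesis. Since $M$ is connected, it is enough to show that $W$ is closed. Then $W=M$, so $X(M)\subset\Sigma$.

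For closedness, take $p\in\overline W$. Continuity gives $X(p)\in\Sigma$. I will show that $X$ makes fourth-order contact with $\Sigma$ at $p$ and then apply Lemma~\ref{lem:c4-contact}.

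\textbf{Step 1: tangent plane and umbilic.} Points of $W$ accumulate at $p$, and on $W$ the tangent planes of $X$ coincide with those of $\Sigma$. Since $dX$ is continuous, the tangent plane of $X$ at $p$ is that of $\Sigma$. With the matching normal, $A=\rho^{-1}\Id$ on $W$, where $\rho$ is the radius of $\Sigma$. Continuity of $A$ then gives $A(p)=\rho^{-1}\Id$. So $p$ is a nonzero umbilic.

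\textbf{Step 2: fourth-order contact.} Write $X$ near $p$ as a graph $u$ over its tangent plane. On each component of $W$ in this chart, $u$ coincides with $\zeta_\lambda$, $\lambda=1/\rho$, because $\Sigma$ is locally a graph over this tangent plane near $X(p)$. Choose the sign of the lower hemisphere matching the normal.

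Since $u\in C^4$, the derivatives $D^j(u-\zeta_\lambda)$ for $j\le4$ are continuous. They vanish on the open set $W$, which accumulates at the origin. Hence they vanish at the origin. Lemma~\ref{lem:c4-contact} then shows that a neighborhood of $p$ lies in $\Sigma$, so $p\in W$.

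The main care is in two points. First, the graph of $\Sigma$ near $X(p)$ must really be the same $\zeta_\lambda$ used in the lemma, with the same normal orientation. Second, $W$ must be understood in the chart, so that vanishing of the derivatives on open pieces accumulating at $0$ passes to $0$.

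\textbf{Compact case.} If $M$ is compact, then $X:M\to\Sigma$ is an immersion between surfaces, hence a local diffeomorphism. Its image is therefore open. Compactness makes the image closed. As $\Sigma$ is connected, $X(M)=\Sigma$.
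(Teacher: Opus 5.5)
Your proof is correct and follows the paper's argument. It uses the same open set $W$ (the paper's $E=\operatorname{int}_M B^{-1}(0)$ with $B=|X-O|^2-R_0^2$), gets closedness from a vanishing fourth-order jet at a limit point together with Lemma~\ref{lem:c4-contact}, and handles the compact case by the same open--closed image argument. The only cosmetic difference is in how the vanishing jet is obtained: the paper passes it from $B$ to $u-\zeta_{1/R_0}$ through the factorization $B=(u-\zeta_{1/R_0})(u+\zeta_{1/R_0}-2R_0)$, whereas you note directly that $u=\zeta_\lambda$ on the open pieces of $W$ accumulating at $p$.
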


\begin{proof}
Write $\Sigma=\{z:|z-O|^2=R_0^2\}$, where $R_0>0$, and set
$B=|X-O|^2-R_0^2$ and $E=\operatorname{int}_M B^{-1}(0)$.
The set $E$ is nonempty and open.
If $p\in\overline E$, continuity of the derivatives of $B$ shows
that its entire fourth-order jet vanishes at $p$.
In particular, the tangent plane of $X$ at $p$ is tangent to
$\Sigma$.  Choose graph coordinates at $p$ in which
$O=(0,0,R_0)$, and let $u$ and $\zeta_{1/R_0}$ be the graphs of
the immersion and sphere.  In these coordinates,
\[
 B=(u-\zeta_{1/R_0})(u+\zeta_{1/R_0}-2R_0).
\]
The second factor is nonzero at $p$, so
$u-\zeta_{1/R_0}$ has zero fourth-order jet there.
In particular, $A(p)=R_0^{-1}\Id$ for the chosen local normal.
Lemma~\ref{lem:c4-contact} gives a neighborhood of $p$ in $E$.
Thus $E$ is also closed, and connectedness yields $E=M$.
Finally, if $M$ is compact, its image is closed in $\Sigma$.
The immersion into the two-dimensional sphere is a local
diffeomorphism, so its image is also open; hence it is all of $\Sigma$.
\end{proof}

\subsection{Radial quartic terms and local rotational symmetry}
\label{sec:quartic-rotation}

We work near a nonzero umbilic in the normalization $c=1$.
After choosing the positive normal, write the surface as a graph
$u$ on a full disk $B_\rho\subset\R^2$, with
\[
 u(0)=0,\qquad Du(0)=0,\qquad D^2u(0)=\Id.
\]
Put $r=|x|$ and $\zeta(r)=1-\sqrt{1-r^2}$, so that $\zeta$ is
the tangent sphere of curvature one.  The graph shape operator is
\begin{equation}
 A=(\Id+Du\otimes Du)^{-1}\frac{D^2u}{\sqrt{1+|Du|^2}},
 \qquad
 (\kappa_1-\kappa_2^3)(\kappa_2-\kappa_1^3)=0.
 \label{eq:qr-graph}
\end{equation}
Shrinking the disk makes both principal curvatures positive.
We first determine the cubic and quartic Taylor terms of $u-\zeta$.
When the quartic term is nonzero, an energy estimate forces
$u_\theta=0$, giving rotational symmetry on a smaller full disk.

\begin{lemma}\label{lem:cubic-jet}
If $u\in C^3(B_\rho)$ satisfies \eqref{eq:qr-graph} and the above
normalization, then
\begin{equation}
 D^j(u-\zeta)=o(r^{3-j}),\qquad 0\leq j\leq3.
 \label{eq:qr-cubic-vanishing}
\end{equation}
\end{lemma}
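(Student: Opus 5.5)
The plan is to reduce the statement to the vanishing of a single cubic Taylor term, and then kill that term with the homogeneous Hessian classification of Lemma~\ref{lem:polynomial} at $\sigma=3$, the slope forced by linearizing the cubic relation at $t=1$.

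First, the reduction. Since $u\in C^3$ and $\zeta$ is smooth, $h=u-\zeta$ is $C^3$. Its Taylor coefficients through degree two vanish by the normalization $u(0)=0$, $Du(0)=0$, $D^2u(0)=\Id$, as in Section~\ref{sec:common}. Taylor's theorem with derivative control then gives
\[
 D^jh=D^jP_3+o(r^{3-j}),\qquad 0\le j\le3,
\]
where $P_3$ is the cubic homogeneous Taylor term of $h$. Thus \eqref{eq:qr-cubic-vanishing} is equivalent to $P_3=0$. Suppose instead that $P_3\ne0$.

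Second, the shape-operator asymptotics. We have $Dh=O(r^2)$ and $D^2h=D^2P_3+o(r)$. Lemma~\ref{lem:error} with $t=1$ therefore gives
\[
 B=\mathcal A-\Id=D^2P_3+o(r),\qquad |B|=O(r),
\]
because the error $E$ is $O(r^3)$.

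Third, the symmetric residual. Let $\alpha=\kappa_1-1$ and $\beta=\kappa_2-1$ be the eigenvalues of $B$. At each point \eqref{eq:qr-graph} says $\kappa_2=\kappa_1^3$ or $\kappa_1=\kappa_2^3$. In the first case
\[
 \beta=(1+\alpha)^3-1=3\alpha+O(\alpha^2),
\]
so $\beta-3\alpha=O(|B|^2)$; in the second case $\alpha-3\beta=O(|B|^2)$. The other factor is $O(|B|)$ in either case. Apply the exact identity \eqref{eq:symmetric-residual} with $\sigma=3$, whose right-hand side is symmetric in the two labels:
\[
 16\det B-3(\tr B)^2=(\beta-3\alpha)(\alpha-3\beta)=O(|B|^3)=O(r^3).
\]
No continuous labeling or secant tangent is needed, since the bound holds pointwise whichever factor is small.

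Fourth, passage to the homogeneous term and conclusion. Evaluate the last identity at $x=re$ with $|e|=1$ and divide by $r^2$. Using $B=D^2P_3+o(r)$ and letting $r\to0$ yields
\[
 16\det D^2P_3(e)=3\bigl(\tr D^2P_3(e)\bigr)^2,
\]
and homogeneity extends this to the whole plane. This is \eqref{eq:hessian} with $\sigma=3$ and $n=3$. Lemma~\ref{lem:polynomial} says every nonzero solution has even degree, so $P_3=0$, which proves \eqref{eq:qr-cubic-vanishing}.

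The only delicate point is the third step: the pointwise error must be $o(r^2)$ uniformly, regardless of which branch of the relation holds at a given point. The factorization handles this, because at every point one factor is quadratically small in $|B|$ while the other is merely bounded by $|B|$.
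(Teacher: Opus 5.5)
Your proof is correct and follows the paper's own argument. You reduce to the cubic Taylor term $P_3$, show that the cubic relation forces $16\det D^2P_3=3(\tr D^2P_3)^2$, and exclude odd degree with Lemma~\ref{lem:polynomial} at $\sigma=3$; the only cosmetic difference is that you bound the residual by $O(|B|^3)$ branchwise from the factored identity \eqref{eq:symmetric-residual}, whereas the paper expands $\mathcal P(\Id+E)=16\det E-3(\tr E)^2+O(|E|^3)$.
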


\begin{proof}
Let $P_3$ be the homogeneous cubic Taylor term of $u-\zeta$.
Taylor's formula for the function and its first two derivatives gives
\[
 D^j(u-\zeta-P_3)=o(r^{3-j}),\qquad 0\leq j\leq2.
\]
Since the shape operator of $\zeta$ is exactly $\Id$,
\eqref{eq:qr-graph} gives $A=\Id+D^2P_3+o(r)$.
Writing $T=\tr A$ and $K=\det A$, the curvature relation is
\[
 \mathcal P(A):=K-(T^4-4T^2K+2K^2)+K^3=0.
\]
Its expansion at the identity is
\begin{equation}
 \mathcal P(\Id+E)=16\det E-3(\tr E)^2+O(|E|^3).
 \label{eq:qr-quadratic-part}
\end{equation}
Dividing by $r^2$ along each ray therefore yields
\begin{equation}
 16\det D^2P_3=3(\tr D^2P_3)^2.
 \label{eq:qr-hessian-cone}
\end{equation}

If $P_3$ were nonzero, Lemma~\ref{lem:polynomial} with slope
parameter $3$ would force its degree to be four. This is impossible.
Hence $P_3=0$.  Taylor's formula gives
\eqref{eq:qr-cubic-vanishing} for $j\leq2$, and continuity of
$D^3(u-\zeta)$ gives the assertion for $j=3$.
\end{proof}

\begin{lemma}\label{lem:quartic-jet}
If, in addition, $u\in C^4(B_\rho)$, there is $a\in\R$ such that
\begin{equation}
 u=\zeta+ar^4+R,\qquad D^jR=o(r^{4-j}),\quad 0\leq j\leq4.
 \label{eq:qr-quartic-taylor}
\end{equation}
\end{lemma}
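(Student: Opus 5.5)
The plan mirrors the proof of Lemma~\ref{lem:cubic-jet}, one degree higher. Let $P_4$ be the homogeneous quartic Taylor term of $u-\zeta$ and put $R=u-\zeta-P_4$. Lemma~\ref{lem:cubic-jet} shows that the Taylor coefficients of $u-\zeta$ vanish through degree three. Since $u\in C^4$, Taylor's theorem with derivative control then gives $D^jR=o(r^{4-j})$ for $0\le j\le4$. The case $j=4$ follows directly from continuity of $D^4(u-\zeta)$, because $D^4R=D^4(u-\zeta)-D^4P_4$ vanishes at the origin. It therefore remains only to show that $P_4=ar^4$ for some real $a$.

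To identify $P_4$, I will compute the shape operator. The graph formula \eqref{eq:qr-graph} is the same as $\mathcal A=\mathcal L(Du)[D^2u]$. Lemma~\ref{lem:error} with $h=u-\zeta$ (and $t=1$) gives
\[
 A=\Id+D^2h+E,\qquad |E|\le Cr^2|D^2h|+Cr|Dh|.
\]
Here $D^2h=D^2P_4+o(r^2)$ and $Dh=O(r^3)$, so $E=O(r^4)$. Hence
\[
 A=\Id+D^2P_4+o(r^2).
\]
I then substitute this into the expansion \eqref{eq:qr-quadratic-part} of the curvature polynomial $\mathcal P$ at the identity, with $E_0=A-\Id=O(r^2)$. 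The cubic remainder is $O(r^6)$. The quadratic part equals $16\det D^2P_4-3(\tr D^2P_4)^2+o(r^4)$. Evaluating at $x=re$ for a unit vector $e$, dividing by $r^4$ and letting $r\to0$ gives the identity
\[
 16\det D^2P_4=3(\tr D^2P_4)^2
\]
on the unit circle. Homogeneity then extends it to the whole plane.

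This identity is \eqref{eq:hessian} with $\sigma=3$, since $(\sigma+1)^2=16$ and $\sigma=3$. If $P_4\ne0$, Lemma~\ref{lem:polynomial} with $n=4$ (so $m=2$) gives $P_4=a(x^2+y^2)^2=ar^4$ with $a\neq0$; the slope value $3=2m-1$ is consistent with this. If $P_4=0$, the conclusion holds with $a=0$. In either case \eqref{eq:qr-quartic-taylor} follows.

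The only delicate point is the error bookkeeping. One has to check that $E=O(r^4)$ and that the cubic remainder in \eqref{eq:qr-quadratic-part} is $o(r^4)$, so that both are negligible against the quadratic terms of size $r^4$. This uses only $C^4$ regularity together with the vanishing of the cubic jet from Lemma~\ref{lem:cubic-jet}. The expansion \eqref{eq:qr-quadratic-part} is taken from the previous proof. As in the proof of Theorem~\ref{thm:slope}, no division by $D^2P_4(e)$ is needed, so directions where that Hessian vanishes cause no difficulty.
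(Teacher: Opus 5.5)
Your proposal is correct and follows the paper's proof: the vanishing cubic jet from Lemma~\ref{lem:cubic-jet}, the expansion $A=\Id+D^2P_4+o(r^2)$, division of the quadratic part \eqref{eq:qr-quadratic-part} by $r^4$, Lemma~\ref{lem:polynomial} with slope parameter $3$, and then Taylor's formula for the remainder. Your extra bookkeeping through Lemma~\ref{lem:error} only makes explicit what the paper leaves implicit; the matrix in \eqref{eq:qr-graph} is conjugate to $\mathcal L(Du)[D^2u]$ rather than equal to it, but this is harmless because $\mathcal P$ depends only on trace and determinant.
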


\begin{proof}
By Lemma~\ref{lem:cubic-jet}, the first possible Taylor term of
$u-\zeta$ is a homogeneous quartic $P_4$.
Now $A=\Id+D^2P_4+o(r^2)$, and division of
\eqref{eq:qr-quadratic-part} by $r^4$ gives
$16\det D^2P_4=3(\tr D^2P_4)^2$.
If $P_4=0$, set $a=0$. Otherwise Lemma~\ref{lem:polynomial},
again with slope parameter $3$, gives $P_4=ar^4$ with $a\ne0$.
Taylor's formula, also applied to
the derivatives, gives \eqref{eq:qr-quartic-taylor}.
\end{proof}

The next proposition uses only three derivatives, provided that the
quartic asymptotic expansion holds with the corresponding derivative
estimates.  Lemma~\ref{lem:quartic-jet} supplies these estimates in
the $C^4$ case.

\begin{proposition}\label{prop:quartic-rotation}
Suppose $u\in C^3(B_\rho)$ satisfies \eqref{eq:qr-graph} and
\begin{equation}
 u=\zeta+ar^4+R,\qquad a\ne0,\qquad
 D^jR=o(r^{4-j}),\quad 0\leq j\leq3.
 \label{eq:qr-asymptotic}
\end{equation}
Then, on a smaller full disk, $u$ is radial and, with $h=1+8a$,
\begin{equation}
 u(r)=
 \begin{cases}
  \displaystyle\frac{1-\sqrt{1-hr^2}}{h},&h\ne0,\\[6pt]
  r^2/2,&h=0.
 \end{cases}
 \label{eq:qr-model}
\end{equation}
\end{proposition}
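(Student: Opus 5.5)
The plan is to prove first that $u_\theta=0$ on a smaller full disk, using an energy estimate for the angular derivative. The explicit form \eqref{eq:qr-model} then follows from a radial ODE.

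\emph{Step 1 (structure on the punctured disk).} By \eqref{eq:qr-asymptotic} and the graph formula, $A=\Id+D^2(ar^4)+o(r^2)$. The radial and tangential eigenvalues of $D^2(ar^4)$ are $12ar^2$ and $4ar^2$. Since $a\ne0$, every point of $B_\rho\setminus\{0\}$ near $0$ is nonumbilic. With $t$ as in \eqref{eq:t-spectrum}, we get $t=1+4ar^2+o(r^2)$. The eigenvalue $t$ has principal direction within $o(1)$ of $\partial_\theta$, and $t^3$ has direction within $o(1)$ of $\partial_r$.

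\emph{Step 2 (linearized equation for $w=u_\theta$).} Rotations about the vertical axis preserve \eqref{eq:qr-graph}. Differentiating $\mathcal P(A)=0$ in $\theta$ therefore gives a linear second-order equation $\mathcal M w=0$. Its coefficients are the derivative of $\mathcal P\circ A$ in $(Du,D^2u)$, which is continuous since $u\in C^3$. From \eqref{eq:qr-quadratic-part}, the principal part at $A=\Id+D^2P$ is $32\det$-linearization minus $6\tr D^2P\cdot\tr$. With $D^2P\approx\diag(12ar^2,4ar^2)$ in polar frame this evaluates to
\[
 -32ar^2\Bigl(w_{rr}-3\bigl(\tfrac{w_r}{r}+\tfrac{w_{\theta\theta}}{r^2}\bigr)\Bigr).
\]
The remaining terms are relatively $o(1)$, or carry $Dw$ with small coefficients. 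Dividing by $-32ar^2$ and passing to $s=\log r$, the equation becomes a perturbed wave equation on the cylinder:
\[
 w_{ss}-4w_s-3w_{\theta\theta}=\varepsilon(s,\theta)\cdot(D^2w,Dw)_{\text{cyl}},
 \qquad \varepsilon\to0\ (s\to-\infty).
\]
This is consistent with the relation being hyperbolic, not elliptic, at slope $3$. For Fourier modes $e^{ik\theta}$ of the unperturbed equation, the characteristic roots are $2\pm\sqrt{4-3k^2}$. The mode $k=1$ gives $r$ and $r^3$; the modes $k\ge2$ behave like $r^2$ times oscillation. Every nonzero $\theta$-dependent solution is thus at least of size $r^3$, which contradicts $w=R_\theta=o(r^4)$, with $D w=o(r^3)$ in Cartesian terms.

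\emph{Step 3 (energy estimate, the main obstacle).} To make Step 2 rigorous despite the perturbation and the merely continuous coefficients, I would use the weighted angular energy
\[
 \mathcal E(s)=e^{-8s}\int_0^{2\pi}\bigl((w_s-2w)^2+3w_\theta^2-4w^2\bigr)d\theta .
\]
Since $w$ has zero mean in $\theta$, $3\int w_\theta^2-4\int w^2\ge -\int w^2$, and this is only barely coercive. I expect to adjust the weight to $e^{-(8-\delta)s}$ and to use Wirtinger for the $k=1$ mode. The shift $w_s-2w$ removes the damping term. The aim is a differential inequality $|\mathcal E'|\le\varepsilon(s)\,\mathcal E$-type, or $\mathcal E'\ge -\varepsilon\,\mathcal E$ with a good sign. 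The hypothesis $D^j R=o(r^{4-j})$ for $j\le3$ gives $\mathcal E(s)\to0$ as $s\to-\infty$. Gronwall then forces $\mathcal E\equiv0$ on $(-\infty,s_0]$, hence $w\equiv0$. The delicate points are:
\begin{itemize}
\item choosing the weight so that the $r^4$ decay beats the slowest mode $r^3$ with room to spare;
\item handling the cross terms $\varepsilon\,w_{s\theta}$, which I would absorb by integrating by parts in $\theta$;
\item justifying differentiation in $\theta$ with only $C^3$ regularity, which I would do by working with difference quotients in $\theta$ instead.
\end{itemize}

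\emph{Step 4 (radial ODE).} Once $u$ is radial, set $\kappa=\kappa_{\mathrm{par}}=\sin\varphi/r$, where $\sin\varphi=u'/\sqrt{1+u'^2}$, so that $\kappa_{\mathrm{mer}}=(r\kappa)'$. Step 1 fixes which curvature is the cube, so the relation reads $(r\kappa)'=\kappa^3$, that is, $r\kappa'=\kappa^3-\kappa$ with $\kappa(0)=1$. Its solutions are $\kappa^{-2}=1+Cr^2$, and matching $\kappa=1+4ar^2+o(r^2)$ gives $C=-8a$. Uniqueness holds because the substitution $\kappa^{-2}$ linearizes the equation. Integrating $u'=\tan\varphi$ with $\sin\varphi=r/\sqrt{1-8ar^2}$ and $u(0)=0$ yields \eqref{eq:qr-model} with $h=1+8a$; the case $h=0$ gives $u=r^2/2$.
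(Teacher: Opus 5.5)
Steps 1, 2 and 4 agree with the paper: the same linearized operator $w_{ss}-4w_s-3w_{\theta\theta}$ in $s=\log r$, the same zero-mean property of $w=u_\theta$, and the same radial branch $\kappa_{\mathrm{mer}}=\kappa_{\mathrm{par}}^3$ integrated to $u'=r/\sqrt{1-(1+8a)r^2}$. Step 3, which carries the whole argument, has a genuine gap.

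With $v=e^{-2s}w$, your functional is
\[
 \mathcal E=e^{-4s}\int_0^{2\pi}\bigl(v_s^2+3v_\theta^2-4v^2\bigr)\,d\theta,
\]
the exactly conserved energy of the limiting equation $v_{ss}-3v_{\theta\theta}-4v=0$. It is not ``barely coercive''; it is indefinite on the $k=1$ mode.
\begin{itemize}
\item It vanishes identically on the genuine solutions $w=r\cos\theta$ and $w=r^3\cos\theta$.
\item It is negative on the data $v=\cos\theta$, $v_s=0$.
\end{itemize}
Two consequences follow. The perturbation terms are bounded by $\varepsilon(s)\int(v_s^2+v_\theta^2+v^2)$, and this norm cannot be controlled by $\varepsilon|\mathcal E|$, so no Gronwall inequality is available. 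Even $\mathcal E\equiv0$ would not force $w=0$. Neither proposed remedy helps. An exponential weight does not change the sign of the quadratic form. Wirtinger only gives the lower bound $-\int w^2$ you already wrote. With variable coefficients the $k=1$ mode cannot be split off.

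The missing idea is to trade exact conservation for coercivity by lowering the zeroth-order coefficient. The paper uses $\mathcal E=\frac12\int(v_s^2+bv_\theta^2-2v^2)\,d\theta$. By Poincar\'e, $2\mathcal E\ge\int(v_s^2+v_\theta^2)$ when $b$ is near $3$. At the limiting coefficients its derivative is exactly $2\int vv_s\le 2\mathcal E$, so after perturbation $\mathcal E'\le 3\mathcal E$. The hypotheses give $v,v_s,v_\theta=o(e^{2s})$, hence $\mathcal E=o(e^{4s})$. Then $e^{-3s}\mathcal E$ is nonincreasing and tends to $0$ as $s\to-\infty$, which forces $\mathcal E\equiv0$ and $w=0$. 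The real ``room to spare'' is the growth rate $3<4$, not a choice of weight.

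Integrating the $2d\,v_{s\theta}$ and $b\,v_{\theta\theta}$ terms by parts produces $d_\theta$, $b_\theta$ and $b_s$. These must also tend to zero, which requires one weighted derivative of the coefficient errors. This is exactly where $D^3R=o(r)$ enters, and your formulation with only $\varepsilon\to0$ does not record it. Difference quotients are unnecessary: $u\in C^3$ already makes $w\in C^2$, so the linearized equation holds classically.
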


\begin{proof}
For positive principal curvatures, the unordered relation is equivalent
to
\[
 F(p,Q)=\tr A-f(\det A)=0,\qquad
 f(K)=K^{1/4}+K^{3/4},
\]
where $p=Du$, $Q=D^2u$, $g=\Id+p\otimes p$, $W=\sqrt{1+|p|^2}$,
and $A=g^{-1}Q/W$.
Indeed, the sum and product determine the unordered pair
$\{K^{1/4},K^{3/4}\}$.
We linearize this smooth finite-dimensional equation with respect to
rotations of the independent variables.

Set $E=A-\Id$ and
\[
 M=\Id-f'(K)K A^{-1},\qquad K=\det A.
\]
Since $f'(1)=1$ and $f''(1)=-3/8$, expansion gives
\[
 M=E-\frac58(\tr E)\Id+O(|E|^2).
\]
The variation formula $dF=\tr(M\,dA)$ yields
$\alpha:=F_Q=Mg^{-1}/W$ and $\beta:=F_p$.
The matrix $\alpha$ is symmetric, since
$A^{-1}g^{-1}=WQ^{-1}$.
In the Euclidean polar frame, \eqref{eq:qr-asymptotic} gives
\begin{equation}
 \begin{aligned}
 E&=ar^2\diag(12,4)+o(r^2),\\
 \alpha&=2ar^2\diag(1,-3)+o(r^2),\qquad \beta=O(r^3).
 \end{aligned}
 \label{eq:qr-coefficients}
\end{equation}
Each remainder in \eqref{eq:qr-coefficients} retains its stated
order after one application of $r\partial_r$ or $\partial_\theta$.
Equivalently, since $a\ne0$, we may write
\[
 \alpha=2ar^2
 \begin{pmatrix}1+\varepsilon_{11}&\varepsilon_{12}\\
                 \varepsilon_{12}&-3+\varepsilon_{22}\end{pmatrix},
 \qquad
 \varepsilon_{ij},\ r\partial_r\varepsilon_{ij},\
 \ \partial_\theta\varepsilon_{ij}\longrightarrow0
\]
uniformly as $r\to0$.
To see the derivative count, write
\[
 \begin{aligned}
 p-D\zeta&=4ar^2x+DR=O(r^3),\\
 Q-D^2\zeta&=a(8x\otimes x+4r^2\Id)+D^2R.
 \end{aligned}
\]
The hypothesis $D^3R=o(r)$ controls one weighted derivative of
$D^2R=o(r^2)$.
The change in $g^{-1}/W$ from its spherical value is $O(r^4)$,
with the same bound after one weighted derivative.
This proves the assertion for $E$ and then for $\alpha$.
Finally $D_pA=O(r)$, with one weighted derivative also $O(r)$;
contracting with $M=O(r^2)$ gives the stated estimates for $\beta$.
Here $DQ=O(r)$, and the derivatives of the polar frame under
$r\partial_r$ and $\partial_\theta$ are bounded.
Thus all these bounds use at most $D^3u$.

Let $w=u_\theta=-x_2u_{x_1}+x_1u_{x_2}$.
Rotational invariance of $F$ and $u\in C^3$ give the classical equation
\[
 \alpha^{ij}w_{ij}+\beta^jw_j=0,
 \qquad w\in C^2.
\]
Put $s=\log r$.  The polar Hessian identities are
\[
 \begin{aligned}
 w_{rr}&=r^{-2}(w_{ss}-w_s),\\
 \Hess w(e_r,e_\theta)&=r^{-2}(w_{s\theta}-w_\theta),\\
 \Hess w(e_\theta,e_\theta)&=r^{-2}(w_{\theta\theta}+w_s).
 \end{aligned}
\]
Since $a\ne0$, \eqref{eq:qr-coefficients} allows division by
$\alpha_{rr}$ on a smaller punctured disk.  The equation becomes
\begin{equation}
 w_{ss}+2d w_{s\theta}-b w_{\theta\theta}+e w_s+f_0w_\theta=0,
 \label{eq:qr-angular-equation}
\end{equation}
where
\[
 \begin{aligned}
 d=\frac{\alpha_{r\theta}}{\alpha_{rr}},\qquad
 &b=-\frac{\alpha_{\theta\theta}}{\alpha_{rr}},\\
 e=-1+\frac{\alpha_{\theta\theta}}{\alpha_{rr}}
       +\frac{r\beta_r}{\alpha_{rr}},\qquad
 &f_0=-2d+\frac{r\beta_\theta}{\alpha_{rr}}.
 \end{aligned}
\]
The estimates just proved imply, uniformly in $\theta$,
\begin{equation}
 (d,b,e,f_0)\longrightarrow(0,3,-4,0),\qquad
 d_\theta,b_\theta,b_s\longrightarrow0
 \quad(s\longrightarrow-\infty).
 \label{eq:qr-coefficient-limits}
\end{equation}

Set $v=e^{-2s}w$.  Its mean on every circle is zero, because
$w=u_\theta$ and the graph is defined on a full disk.
Equation~\eqref{eq:qr-angular-equation} becomes
\begin{equation}
 v_{ss}+2d v_{s\theta}-b v_{\theta\theta}
 +(e+4)v_s+(4d+f_0)v_\theta+(4+2e)v=0.
 \label{eq:qr-conjugated-equation}
\end{equation}
Consider the energy
\begin{equation}
 \mathcal E(s)=\frac12\int_0^{2\pi}
       \bigl(v_s^2+bv_\theta^2-2v^2\bigr)\dd\theta.
 \label{eq:qr-energy}
\end{equation}
The nonconstant Fourier modes give the periodic Poincar\'e inequality
$\int v^2\leq\int v_\theta^2$.  Consequently this energy is positive definite
when $b$ is sufficiently close to $3$.
Differentiation and periodic integration by parts give the exact identity
\begin{equation}
 \begin{split}
 \mathcal E'={}&\int_0^{2\pi}\Bigl[
   (d_\theta-e-4)v_s^2-(b_\theta+4d+f_0)v_sv_\theta\\
   &\hspace{27mm}-(6+2e)vv_s+\tfrac12b_sv_\theta^2
   \Bigr]\dd\theta.
 \end{split}
 \label{eq:qr-energy-identity}
\end{equation}
These operations require only $v\in C^2$ and $b,d\in C^1$ on
each finite cylinder.

Write $S=\int v_s^2$, $T=\int v_\theta^2$, and $V=\int v^2$.
At the limiting coefficients in \eqref{eq:qr-coefficient-limits},
Poincar\'e gives
\[
 2\mathcal E=S+3T-2V\geq S+T,
 \qquad
 \mathcal E'=2\int_0^{2\pi}vv_s\,d\theta
                  \leq S+V\leq S+T.
\]
The coefficients in the exact identity
\eqref{eq:qr-energy-identity} converge uniformly to these limiting
values. Hence, after shrinking the disk, the same Cauchy and Poincar\'e
estimates give
\[
 \mathcal E'(s)\leq3\mathcal E(s).
\]
The remainder estimates in \eqref{eq:qr-asymptotic} give
$w,w_s,w_\theta=o(r^4)$, hence
$v,v_s,v_\theta=o(e^{2s})$ and $\mathcal E=o(e^{4s})$.
For any fixed $s_0$ in the smaller disk, integration on $[s,s_0]$ gives
\[
 0\leq\mathcal E(s_0)
 \leq e^{3(s_0-s)}\mathcal E(s)\longrightarrow0
 \qquad(s\longrightarrow-\infty).
\]
Thus $v=w=0$, and $u$ is radial.

It remains to integrate the radial curvature equation.
For $q(r)=u'(r)/\sqrt{1+u'(r)^2}$, the meridian and parallel
curvatures are $\kappa_r=q'$ and $\kappa_\theta=q/r$.
The asymptotic expansion gives
\[
 \kappa_r=1+12ar^2+o(r^2),\qquad
 \kappa_\theta=1+4ar^2+o(r^2),
\]
so $\kappa_\theta-\kappa_r^3=-32ar^2+o(r^2)\ne0$ for small
$r>0$.
The unordered relation must therefore take the branch
$q'=q^3/r^3$.
Since $q=r+4ar^3+o(r^3)>0$, integration yields
$q^{-2}=r^{-2}-8a$.
Consequently $u'=r/\sqrt{1-(1+8a)r^2}$, and $u(0)=0$ gives
\eqref{eq:qr-model}.
\end{proof}

\begin{proposition}\label{prop:local-dichotomy}
Under the normalization of this subsection, every $C^4$ graph satisfying
\eqref{eq:qr-graph} agrees on a smaller disk either with $\zeta$ or
with \eqref{eq:qr-model} for some $h\ne1$.
In the second case the graph is a piece of a surface of revolution:
an ellipsoid if $h>0$, a paraboloid if $h=0$, and one sheet of a
two-sheeted hyperboloid if $h<0$.
\end{proposition}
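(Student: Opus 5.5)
The plan is to combine the quartic Taylor dichotomy with the two local rigidity results already proved. Since the graph is $C^4$ and satisfies \eqref{eq:qr-graph} with the normalization $u(0)=0$, $Du(0)=0$, $D^2u(0)=\Id$, Lemma~\ref{lem:quartic-jet} gives $u=\zeta+ar^4+R$ with $D^jR=o(r^{4-j})$ for $0\le j\le4$. We then split on whether $a=0$ or $a\ne0$.

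\emph{Case $a=0$.} Here $D^j(u-\zeta)(0)=0$ for $0\le j\le4$. The remainder estimates give vanishing at the origin for $j\le3$, and for $j=4$ continuity of $D^4R$ together with $D^4R=o(1)$ yields $D^4R(0)=0$. The normalization corresponds to $c=\lambda=1$, with $\zeta=\zeta_1$ the tangent sphere graph. Lemma~\ref{lem:c4-contact} therefore applies and shows that a neighborhood of the origin lies in that tangent sphere. Because $u$ is a graph over a disk and agrees with the lower hemisphere of $\zeta$ near $0$, we get $u=\zeta$ on a smaller disk.

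\emph{Case $a\ne0$.} The $C^4$ estimates contain the $C^3$ hypotheses \eqref{eq:qr-asymptotic}. Proposition~\ref{prop:quartic-rotation} then makes $u$ radial on a smaller full disk and equal to \eqref{eq:qr-model} with $h=1+8a$. Since $a\ne0$, we have $h\ne1$; this separates the case from the sphere, which is exactly the model with $h=1$. Note also that the model with $h=1$ reproduces $\zeta$, so the two alternatives are consistent.

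It remains to identify the surfaces geometrically. For $h\ne0$, the graph $z=(1-\sqrt{1-hr^2})/h$ satisfies $hz-1=-\sqrt{1-hr^2}$. Squaring gives
\[
 h r^2+(hz-1)^2=1,
\]
which is a quadric invariant under rotation about the $z$-axis.
\begin{itemize}
\item If $h>0$, this reads $r^2+h(z-1/h)^2=1/h$. This is an ellipsoid of revolution with equatorial semiaxis $1/\sqrt h$ and axial semiaxis $1/h$; it is a sphere exactly when $h=1$.
\item If $h<0$, the equation $(hz-1)^2-|h|r^2=1$ is a two-sheeted hyperboloid of revolution. The sign choice $hz-1=-\sqrt{1-hr^2}\le -1$ selects a single sheet.
\item If $h=0$, the graph $r^2/2$ is a paraboloid of revolution.
\end{itemize}

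The main point requiring care is the case $a=0$. We must confirm that the fourth-order jet vanishes exactly, not merely to $o(1)$, and that the jet condition in Lemma~\ref{lem:c4-contact} refers to the same graph coordinates and normal. Both follow from the normalization $A(0)=\Id$ and continuity of $D^4u$. All the remaining steps are direct substitutions into results already established.
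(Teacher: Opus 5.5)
Your proposal is correct and follows the paper's own proof. Lemma~\ref{lem:quartic-jet} supplies $a$; the case $a=0$ is settled by Lemma~\ref{lem:c4-contact}, and the case $a\ne0$ by Proposition~\ref{prop:quartic-rotation} with $h=1+8a\ne1$. Your quadric identification $hr^2+(hz-1)^2=1$ is $h$ times the paper's equation $r^2=2u-hu^2$, and the paper's form covers $h=0$ without a separate case.
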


\begin{proof}
Take the coefficient $a$ from Lemma~\ref{lem:quartic-jet}.
If $a=0$, then $u-\zeta=o(r^4)$, and
Lemma~\ref{lem:c4-contact} gives $u=\zeta$ near the origin.
If $a\ne0$, Proposition~\ref{prop:quartic-rotation} applies and
$h=1+8a\ne1$.
The equation $r^2=2u-hu^2$ identifies the three indicated quadrics.
\end{proof}

\section{Global completion and classification}\label{sec:cubic-global}
\subsection{Propagation on a principal cylinder}
\label{sec:principal-propagation}

In homogeneous three-manifolds with four-dimensional isometry group,
G\'alvez and Mira obtain rotational symmetry for elliptic Weingarten
spheres under a bounded-curvature condition on the canonical rotational
model \cite{GM2021}. For the cubic relation in $\R^3$, we have already
constructed a rotational neighborhood. Its continuation uses the
principal-coordinate equation below.

We extend the local rotational annulus by matching curvature data on
complete principal circles. For a $C^3$ immersion, the curvature function
$t$ is only $C^1$. We therefore use the Gauss equation in divergence form
and prove the energy identity at that regularity. Throughout this subsection
the curvature relation is normalized to $c=1$, and the regions under
consideration are positive definite and contain no umbilics.

\begin{lemma}\label{lem:principal-coordinates}
On a local principal-frame patch, the principal coframe determines closed $C^1$ one-forms
\begin{equation}\label{eq:closed-principal-forms}
 \eta_1=\frac{t}{\sqrt{|D|}}\theta_1,
 \qquad \eta_2=|D|^{3/2}\theta_2.
\end{equation}
Their local primitives are $C^2$ coordinates. If $\gamma$ is an embedded
closed principal curve tangent to the $t$-principal line, a sufficiently
narrow annulus about $\gamma$ has coordinates
$(x\bmod\ell,y)$, where $\ell>0$, $\gamma=\{y=0\}$, and
$dx=\eta_1$, $dy=\eta_2$. In these coordinates,
\begin{equation}\label{eq:principal-metric}
 g=\frac{|D|}{t^2}\,dx^2+|D|^{-3}\,dy^2.
\end{equation}
\end{lemma}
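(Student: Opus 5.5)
The plan is to verify closedness of $\eta_1$ and $\eta_2$ directly from the structure equations of the principal coframe and the Codazzi relations \eqref{eq:codazzi}. Then I would integrate them to coordinates, first locally and afterwards on a thin annulus about $\gamma$.

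\emph{Regularity and structure equations.} On a positive nonumbilic patch of a $C^3$ immersion, $A$ is $C^1$ with distinct eigenvalues $t\ne t^3$. Hence $t$, $D=1-t^2\ne0$, the principal frame $e_1,e_2$ and the coframe $\theta_1,\theta_2$ are all $C^1$, and exterior derivatives of $C^1$ functions times $\theta_i$ are classical. With $\nabla e_1=\omega e_2$ and $\nabla e_2=-\omega e_1$, torsion-freeness gives
\[
 d\theta_1=\omega\wedge\theta_2=\alpha\dd\mu,\qquad
 d\theta_2=-\omega\wedge\theta_1=\beta\dd\mu .
\]
Writing $dt=p\theta_1+q\theta_2$, for a $C^1$ function $f(t)$ we get
\[
 d(f\theta_1)=(-f'q+f\alpha)\dd\mu,\qquad
 d(f\theta_2)=(f'p+f\beta)\dd\mu .
\]
Insert $\alpha=q/(tD)$ and $\beta=3tp/D$ from \eqref{eq:codazzi}. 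The forms are then closed for all $p,q$ precisely when
\[
 \frac{f_1'}{f_1}=\frac1{t(1-t^2)},\qquad \frac{f_2'}{f_2}=-\frac{3t}{1-t^2}.
\]
These integrate to $f_1=t/\sqrt{|D|}$ and $f_2=|D|^{3/2}$, as in \eqref{eq:closed-principal-forms}. The sign conventions for $\omega$ should be checked against the stated Codazzi components, since a sign error would flip the exponents.

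\emph{Local coordinates and the metric.} The closed $C^1$ forms have $C^2$ local primitives $x,y$ by the Poincaré lemma. Since $f_1,f_2\ne0$, the forms $\eta_1,\eta_2$ are pointwise independent, so $(x,y)$ is a $C^2$ chart. Substituting $\theta_1=\sqrt{|D|}\,t^{-1}dx$ and $\theta_2=|D|^{-3/2}dy$ into $g=\theta_1^2+\theta_2^2$ gives \eqref{eq:principal-metric}.

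\emph{Annulus.} Along $\gamma$ the unit tangent gives a continuous choice of $e_1$. Because the positive normal orients $\Omega$, a thin tubular neighbourhood of $\gamma$ is an annulus on which $e_1$, $e_2$ and the coframe extend continuously. This removes the sign ambiguity of the line field, so $\eta_1,\eta_2$ are global closed forms there. The annulus retracts onto $\gamma$, so periods are computed along $\gamma$.
\begin{itemize}
\item Since $\gamma$ is tangent to $e_1$, the form $\theta_2$ vanishes on it. Hence $\int_\gamma\eta_2=0$, and $y$ is a global $C^2$ function; normalize $y=0$ at a point of $\gamma$.
\item Along $\gamma$ we have $dy=\eta_2=0$, so $y\equiv0$ on $\gamma$.
\item The period $\ell=\int_\gamma\eta_1=\int_\gamma t/\sqrt{|D|}\,ds$ is positive, with the orientation of $\gamma$ chosen along $e_1$. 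Thus $x$ is well defined modulo $\ell$.
\end{itemize}

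The step I expect to be the main obstacle is showing that $(x\bmod\ell,y)$ is injective on a sufficiently narrow annulus, and that $\gamma$ is exactly $\{y=0\}$ there. The map is a local $C^2$ diffeomorphism into the cylinder $(\R/\ell\mathbb Z)\times\R$. On $\gamma$ it is a bijection onto the circle $y=0$, because $dx>0$ along $\gamma$ and the total increase is $\ell$. I would then use the standard compactness argument for a local diffeomorphism that is injective on a compact set. Suppose distinct points $a_k,b_k$ with equal images converge to $\gamma$. Passing to limits gives the same image, hence the same limit point $a=b$ by injectivity on $\gamma$. This contradicts local injectivity near that point. The narrow annulus is therefore mapped diffeomorphically onto a neighbourhood of $\{y=0\}$, and shrinking it to the preimage of $\{|y|<\epsilon\}$ gives the stated coordinates.
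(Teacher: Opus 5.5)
Your proof is correct and follows essentially the same route as the paper. You use the same structure equations $d\theta_1=\alpha\,d\mu$, $d\theta_2=\beta\,d\mu$ and the same Codazzi values of $\alpha,\beta$, which give the same logarithmic-derivative conditions $f'/f=1/(tD)$ and $k'/k=-3t/D$; your sign conventions are consistent, so the exponents do not flip.

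After that you follow the paper's steps: the periods of $\eta_1,\eta_2$ along $\gamma$, then compactness plus local invertibility to get a uniform annulus. Your injectivity argument just spells out what the paper states in one sentence. The paper also mentions an alternative via the flow of $e_2/|D|^{3/2}$.
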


\begin{proof}
Put $p=e_1t$, $q=e_2t$, $\alpha=\omega(e_1)$, and
$\beta=\omega(e_2)$. The two Codazzi equations give
\[
 q=\alpha(t-t^3),\qquad 3t^2p=\beta(t-t^3),
 \qquad
 \alpha=\frac{q}{tD},\quad \beta=\frac{3tp}{D}.
\]
The bracket relation $[e_1,e_2]=-\alpha e_1-\beta e_2$ yields
$d\theta_1=\alpha\theta_1\wedge\theta_2$ and
$d\theta_2=\beta\theta_1\wedge\theta_2$. For
$f(t)=t/\sqrt{|D|}$ and $k(t)=|D|^{3/2}$, one has
\[
 \frac{f'(t)}{f(t)}=\frac1{tD},\qquad
 \frac{k'(t)}{k(t)}=-\frac{3t}{D}.
\]
These identities prove the closedness in
\eqref{eq:closed-principal-forms}. A primitive of a $C^1$ closed one-form
is $C^2$. Since $\eta_1\wedge\eta_2$ never vanishes, the inverse function
theorem gives local coordinates with $C^2$ inverse.

Orient $e_1$ along $\gamma$ and extend this choice to a narrow annulus;
the orientation of the surface determines $e_2$. The period of $\eta_2$
on $\gamma$ is zero, whereas
\[
 \ell=\int_\gamma\eta_1
     =\int_\gamma\frac{t}{\sqrt{|D|}}\,ds_g>0.
\]
Closedness therefore gives a single-valued primitive $y$ of $\eta_2$
and a primitive $x\bmod\ell$ of $\eta_1$ on the annulus. Along $\gamma$,
$x$ increases strictly and traverses $\R/\ell\mathbb Z$ once.
Compactness of $\gamma$ and the local inverse function theorem give a
uniform smaller annulus on which these are cylinder coordinates.
Equivalently, the flow of $Y=e_2/|D|^{3/2}$ extends the initial $x$
coordinate while increasing $y$ at unit speed, because
$\eta_1(Y)=0$, $\eta_2(Y)=1$, and both forms are closed.
Although this flow is only $C^1$, its inverse coordinates satisfy
$dx=\eta_1$, $dy=\eta_2$ and are thus $C^2$.
Finally, $\theta_1=(\sqrt{|D|}/t)dx$ and
$\theta_2=|D|^{-3/2}dy$, which proves \eqref{eq:principal-metric}.
\end{proof}

Write $m=\sqrt{|D|}/t$ and $n=|D|^{-3/2}$. In principal coordinates
the connection is the continuous one-form
\[
 \omega=-\frac{m_y}{n}\,dx+\frac{n_x}{m}\,dy.
\]
Lemma~\ref{lem:weak-gauss} gives, in distributions,
$\partial_x(n_x/m)+\partial_y(m_y/n)=-Kmn$.
This identity remains valid in the $C^2$ coordinates above: locally the
connection is a $C^1$ reference connection plus the differential of a
$C^1$ angle, and this decomposition retains its regularity under a
$C^2$ coordinate change. With $\varepsilon=\operatorname{sign}D$, the identities
\[
 mn=\frac1{t|D|},\qquad
 \frac{n'(t)}{m}=\frac{3\varepsilon t^2}{|D|^3},\qquad
 \frac{m'(t)}{n}=-\frac{\varepsilon|D|}{t^2}
\]
give the weak curvature equation
\begin{equation}\label{eq:principal-divergence}
 \varepsilon\left\{\partial_y\bigl(B(t)t_y\bigr)
                  -\partial_x\bigl(C(t)t_x\bigr)\right\}
       =\frac{t^3}{|D|},\qquad
 B(t)=\frac{|D|}{t^2},\quad C(t)=\frac{3t^2}{|D|^3}.
\end{equation}
Both $B$ and $C$ are positive on either interval $(0,1)$ or $(1,\infty)$.

\begin{lemma}\label{lem:weak-energy}
Let $J$ be an open interval and $S_\ell=\R/\ell\mathbb Z$, with $\ell>0$.
Suppose $w,\mathcal A\in C^1(S_\ell\times J)$,
$\mathcal B\in C^0(S_\ell\times J)$, $\mathcal A>0$, and
\begin{equation}\label{eq:linear-weak-wave}
 w_{yy}-\partial_x(\mathcal A w_x)=\mathcal B w
\end{equation}
in distributions. If $w=w_y=0$ on $S_\ell\times\{y_0\}$ for some
$y_0\in J$, then $w=0$ on $S_\ell\times J$.
\end{lemma}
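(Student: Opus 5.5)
Equation \eqref{eq:linear-weak-wave} is a one-dimensional wave equation in $y$ with spatial variable $x$ on the circle $S_\ell$. I would prove uniqueness for the Cauchy problem with an energy estimate and Gronwall's inequality. The energy is
\[
 \mathcal F(y)=\frac12\int_{S_\ell}\bigl(w_y^2+\mathcal A w_x^2+w^2\bigr)\dd x .
\]
It is continuous in $y$, since $w\in C^1$. The hypotheses give $\mathcal F(y_0)=0$, because $w_x=0$ on the line $y=y_0$ follows from $w=0$ there. Fix a compact subinterval $[y_0,y_1]\subset J$; the case $y<y_0$ is the same after reflecting $y$. On the compact strip, $\mathcal A\ge a_0>0$ and $|\mathcal A_y|+|\mathcal B|\le C$. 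The target inequality is $\mathcal F(y)\le C'\int_{y_0}^y\mathcal F$. Gronwall then gives $\mathcal F\equiv0$ on the strip, so $w=0$.

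The formal computation multiplies \eqref{eq:linear-weak-wave} by $w_y$, integrates over $S_\ell\times[y_0,y]$, and integrates by parts in $x$ with no boundary terms, by periodicity. This gives
\[
 \mathcal F(y)=\int_{y_0}^{y}\!\!\int_{S_\ell}\Bigl(\tfrac12\mathcal A_y w_x^2+\mathcal B ww_y+ww_y\Bigr)\dd x\dd y' .
\]
The right side is bounded by $C''\int_{y_0}^y\mathcal F$, using $w_x^2\le a_0^{-1}\mathcal A w_x^2$. This is exactly the desired inequality.

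The main obstacle is justifying this identity at the given regularity. Here $w$ is only $C^1$, the second derivatives exist only as distributions, and $w_y$ is not an admissible test function. I would mollify in both variables: convolve with a smooth kernel $\rho_\varepsilon$ in $(x,y)$, working on a slightly smaller strip. The equation becomes
\[
 (w^\varepsilon)_{yy}-\partial_x(\mathcal A w^\varepsilon_x)=(\mathcal B w)^\varepsilon+\partial_x r_\varepsilon ,\qquad r_\varepsilon=(\mathcal A w_x)^\varepsilon-\mathcal A w^\varepsilon_x .
\]
The commutator $r_\varepsilon$ tends to zero uniformly, and the Friedrichs commutator lemma applies because $\mathcal A\in C^1$. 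The smooth energy identity then carries an extra term $\iint \partial_x r_\varepsilon\, w^\varepsilon_y$. Integrating by parts in $x$ turns it into $-\iint r_\varepsilon w^\varepsilon_{xy}$, which is not obviously small, since $w^\varepsilon_{xy}$ is not bounded uniformly.

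To avoid this, I would first try mollifying only in $x$. Each term then remains in the same weak form, and $w^\varepsilon$ is smooth in $x$ and $C^1$ in $y$. The equation itself shows that $w^\varepsilon_{yy}=\partial_x(\mathcal A w_x)^\varepsilon+(\mathcal B w)^\varepsilon$ is continuous, so $w^\varepsilon\in C^2$ in $y$. The commutator term is now $\int\partial_x r_\varepsilon\, w^\varepsilon_y\dd x$. The standard Friedrichs lemma in one variable gives $\partial_x r_\varepsilon=\partial_x\bigl((\mathcal A w_x)^\varepsilon-\mathcal A w^\varepsilon_x\bigr)\to0$ in $L^2_x$ locally uniformly in $y$; this uses $\mathcal A_x$ continuous and only $w_x\in C^0$. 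Since $w^\varepsilon_y$ is bounded, the error vanishes as $\varepsilon\to0$.

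Passing to the limit, the energy identity holds for $w$ itself, and all quantities converge uniformly by the $C^1$ hypotheses. The Gronwall argument above then finishes the proof.
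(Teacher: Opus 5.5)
Your proposal is correct and follows essentially the paper's route: mollify only in the periodic variable $x$, show that the commutator $\partial_x(\mathcal A w_x)^\varepsilon-\partial_x(\mathcal A w^\varepsilon_x)$ vanishes in the limit, use the exact cancellation of the principal terms in the regularized energy, pass to the limit, and close with Gronwall from the vanishing Cauchy data. The only difference is in the commutator step: the paper proves uniform convergence to zero by a direct modulus-of-continuity estimate, whereas you cite the one-dimensional Friedrichs lemma in $L^2_x$, which is also enough to kill the error term against the bounded factor $w^\varepsilon_y$.
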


\begin{proof}
Set $p=w_y$ and $q=w_x$. These functions are continuous and satisfy
$p_y=\partial_x(\mathcal A q)+\mathcal B w$ and $q_y=\partial_xp$
in distributions. Fix a compact time interval inside $J$ containing $y_0$.
Let a subscript
$\delta$ denote convolution in the periodic variable $x$ alone with a
nonnegative smooth approximate identity $\rho_\delta$, supported in
$(-\delta,\delta)$ before periodization. Then
\[
 (p_\delta)_y=\partial_x(\mathcal A q)_\delta+(\mathcal B w)_\delta,
 \qquad(q_\delta)_y=\partial_xp_\delta,
 \qquad(w_\delta)_y=p_\delta.
\]
The right-hand sides are continuous. A continuous function whose
distributional $y$ derivative is continuous equals the integral of that
derivative, up to a function of $x$; thus these are classical $y$
derivatives. This justifies differentiating the regularized energy.

The only commutator needed is
$r_\delta=\partial_x(\mathcal A q)_\delta
          -\partial_x(\mathcal A q_\delta)$. Without differentiating $q$,
\begin{align*}
 r_\delta(x,y)
 &=\int\rho_\delta'(z)
       [\mathcal A(x-z,y)-\mathcal A(x,y)]q(x-z,y)\,dz
       -\mathcal A_x(x,y)q_\delta(x,y).
\end{align*}
For a continuous function $f$, let $\omega_f(\delta)$ denote its uniform
modulus of continuity in $x$ on the chosen compact cylinder. Subtracting
$q(x,y)$ inside the integral and using
\[
 \int\rho_\delta'(z)[\mathcal A(x-z,y)-\mathcal A(x,y)]\,dz
       =(\mathcal A_x)_\delta(x,y)
\]
gives
\begin{equation}\label{eq:spatial-commutator}
 \|r_\delta\|_\infty
 \leq\|q\|_\infty\omega_{\mathcal A_x}(\delta)
 +(M_1+1)\|\mathcal A_x\|_\infty\omega_q(\delta)
 \longrightarrow0,
\end{equation}
where $M_1=\int|z\rho_\delta'(z)|\,dz$ is independent of $\delta$.

Define
\[
 E_\delta(y)=\frac12\int_{S_\ell}
       (p_\delta^2+\mathcal A q_\delta^2+w_\delta^2)\,dx.
\]
The regularized first equation is
$(p_\delta)_y=\partial_x(\mathcal A q_\delta)
 +(\mathcal B w)_\delta+r_\delta$. Periodic integration by parts therefore gives
\[
 E_\delta'(y)=\int_{S_\ell}
 \left\{\frac12\mathcal A_yq_\delta^2
       +p_\delta[(\mathcal B w)_\delta+r_\delta+w_\delta]\right\}\,dx.
\]
The principal terms cancel exactly. All functions in this formula converge
uniformly on the compact cylinder, including the commutator by
\eqref{eq:spatial-commutator}. Hence the limiting energy is $C^1$ and satisfies
\begin{equation}\label{eq:weak-energy-identity}
 \begin{split}
 E(y)&=\frac12\int_{S_\ell}(w_y^2+\mathcal A w_x^2+w^2)\,dx,\\
 E'(y)&=\int_{S_\ell}
       \left\{\frac12\mathcal A_yw_x^2+(\mathcal B+1)ww_y\right\}\,dx.
 \end{split}
\end{equation}
The positive lower bound for $\mathcal A$ on the compact cylinder gives
$|E'|\leq C_0E$. The initial conditions also give $w_x(\cdot,y_0)=0$,
so $E(y_0)=0$. The differential inequality in both time directions gives
$E=0$. Any point of $J$ lies with $y_0$ in a compact subinterval of $J$,
which proves the assertion.
\end{proof}

\begin{proposition}\label{prop:periodic-uniqueness}
Let $t\in C^1(S_\ell\times J)$ satisfy
\eqref{eq:principal-divergence}, with values in one of $(0,1)$ or $(1,\infty)$.
Let $T(y)$ be a smooth rotational solution of the same equation with values
in the same interval. If $t=T$ and $t_y=T_y$ on a complete circle
$y=y_0$, then $t(x,y)=T(y)$ throughout the common cylinder.
If $t$ and $T$ are the curvature functions of immersed cylinders in these
principal coordinates, the immersions have identical first and second
fundamental forms and differ by a fixed Euclidean rigid motion. If they
already agree on an open subcylinder, this motion is fixed by that agreement.
\end{proposition}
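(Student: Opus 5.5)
The plan is to reduce the nonlinear divergence equation \eqref{eq:principal-divergence} for the difference $w=t-T$ to the linear weak wave equation of Lemma~\ref{lem:weak-energy}, and then to recover the immersions from their curvature data by the fundamental theorem of surfaces.

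\emph{Linearization.} Since $T$ depends only on $y$, the $x$-term vanishes for $T$. Subtracting the two weak equations gives, in distributions,
\[
 \partial_y\bigl(B(t)t_y-B(T)T_y\bigr)-\partial_x\bigl(C(t)t_x\bigr)
 =\varepsilon\Bigl(\frac{t^3}{|1-t^2|}-\frac{T^3}{|1-T^2|}\Bigr).
\]
The right side equals $\mathcal B_0 w$, with $\mathcal B_0$ a continuous difference quotient. The $y$-flux is $B(t)w_y+(B(t)-B(T))T_y=B(t)w_y+\tilde B\,w\,T_y$. To put the equation in the form \eqref{eq:linear-weak-wave}, with a pure $w_{yy}$ and no first-order terms, I would divide the leading coefficient out. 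For that I reparametrize $y$ by a new variable $Y$ with $dY/dy$ adapted to $B$.

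A cleaner route is to introduce potentials $\Beta(t)=\int B$ and $\Gamma(t)=\int C$. Then $B(t)t_y=\partial_y\Beta(t)$, and the equation becomes $\varepsilon\{\Beta(t)_{yy}-\Gamma(t)_{xx}\}=t^3/|D|$. Set $W=\Beta(t)-\Beta(T)$. Since $\Beta$ is strictly increasing, $w=\phi W$ with $\phi$ continuous and positive. Then $\Gamma(t)_{xx}=\partial_x(\mathcal A W_x)$, where $\mathcal A=C(t)/B(t)\in C^1$ is positive, because $\Gamma(t)_x=C t_x=(C/B)\Beta(t)_x=\mathcal A W_x$. The zeroth-order term is $\mathcal B W$ with $\mathcal B$ continuous. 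This gives exactly $W_{yy}-\partial_x(\mathcal A W_x)=\mathcal B W$.

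Here $W\in C^1$, and $\mathcal A\in C^1$ requires $t\in C^1$, which holds. The Cauchy data give $W=W_y=0$ on $y=y_0$, since $W_y=B(t)t_y-B(T)T_y$. Lemma~\ref{lem:weak-energy} then yields $W=0$, hence $t=T$. The main obstacle is checking these regularity requirements precisely, in particular that $\mathcal B=\varepsilon^{-1}(\cdots)/W$ extends continuously. This follows from writing both nonlinearities as $C^1$ functions of $\Beta$-values via the inverse of $\Beta$.

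\emph{Immersions.} By Lemma~\ref{lem:principal-coordinates}, the metric \eqref{eq:principal-metric} and the second fundamental form $t\,m^2dx^2+t^3n^2dy^2$ are determined by $t$ alone in principal coordinates. Equal $t$ therefore gives identical first and second fundamental forms on the connected cylinder. The uniqueness part of the fundamental theorem then gives a rigid motion relating the immersions. It can be proved at this $C^2$-coordinate regularity by noting that the adapted frames of both immersions solve the same linear first-order ODE system along curves, with continuous coefficients, and that equal initial frames propagate.

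If the immersions already agree on an open subcylinder, the rigid motion restricts to the identity there. Since a rigid motion fixing an open piece of a nonplanar surface is determined, it is the prescribed one.
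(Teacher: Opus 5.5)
Your proposal is correct and follows essentially the paper's proof: your primitive of $B$ is the paper's $J_0$, your $W$ and $\mathcal A=C(t)/B(t)$ are its $w=U-V$ and $a_0(U)$, your continuous zeroth-order coefficient is the same difference quotient of $G$, and Lemma~\ref{lem:weak-energy} finishes the argument in the same way. The recovery of the immersions also matches the paper: equal $t$ gives equal fundamental forms in principal coordinates, and the linear Gauss--Weingarten system along paths, with continuous coefficients, propagates agreement of position and frame. Your first idea of reparametrizing $y$ would not work, since $B(t)$ depends on $x$, but the potential route you adopt makes it unnecessary.
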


\begin{proof}
Choose a primitive $J_0$ of $B$ on the relevant interval. Since $B>0$,
$J_0$ is a smooth one-to-one change of variable. Put $U=J_0(t)$ and
define functions of $U$ by
\[
 a_0(U)=\frac{C(J_0^{-1}(U))}{B(J_0^{-1}(U))},\qquad
 G(U)=\frac{\varepsilon(J_0^{-1}(U))^3}
             {|1-(J_0^{-1}(U))^2|}.
\]
The classical first derivative identities $U_y=B(t)t_y$ and
$U_x=B(t)t_x$ transform \eqref{eq:principal-divergence} into
$U_{yy}-\partial_x(a_0(U)U_x)=G(U)$ in distributions.
For $V(y)=J_0(T(y))$ and $w=U-V$, the identity $V_x=0$ gives
\[
 w_{yy}-\partial_x(a_0(U)w_x)=b_0(x,y)w,
 \qquad b_0=\int_0^1G'(V+\lambda(U-V))\,d\lambda.
\]
On every compact common cylinder, $a_0(U)$ is positive and $C^1$, and
$b_0$ is continuous and bounded. The segment between $U$ and $V$ stays
in the interval $J_0((0,1))$ or $J_0((1,\infty))$, as appropriate.
The initial data give $w=w_y=0$, so Lemma~\ref{lem:weak-energy} yields
$U=V$ and therefore $t=T$.

Formula~\eqref{eq:principal-metric} and the diagonal principal curvatures
now give identical first and second fundamental forms. In the $C^2$
principal coordinates the immersions are at least $C^2$ and their adapted
orthonormal frames are $C^1$. Along any piecewise $C^1$ path, the
Gauss--Weingarten equations are linear ordinary differential equations
with continuous coefficients. If the position and adapted frame agree at
one point, uniqueness makes the immersions agree along the path. Since the
cylinder is connected, one rigid motion identifies them everywhere.
\end{proof}

\subsection{Completion of a rotational annulus}
\label{sec:compact-completion}

A rotational annulus supplies the complete circles required by the
preceding uniqueness result. We follow their meridian geodesics on the
compact surface and compare the resulting family with a fixed rotational
model. At a finite endpoint away from a pole, continuity of the original
shape operator provides the data for a new principal cylinder, so the
matching continues.

\begin{proposition}\label{prop:annulus-completion}
Let $M$ be a connected compact smooth surface without boundary and
$X:M\to\R^3$ a $C^3$ immersion satisfying \eqref{eq:cubic} with $c=1$.
Suppose an open annulus $U\subset M$ is mapped diffeomorphically onto a
rotational annulus consisting of complete latitude circles, each traversed
once. Assume that $A$ is positive definite and nonumbilic on $U$, with
latitude curvature $t$ and meridian curvature $t^3$.
Then $X$ is an embedding onto a rotational ellipsoid. After a rigid motion,
its image has the equation
\begin{equation}\label{eq:completed-ellipsoid}
 h r^2+h^2(z-z_0)^2=1
\end{equation}
for some $h>0$, $h\ne1$, where $r$ is the distance to the rotation axis.
\end{proposition}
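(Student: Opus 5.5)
We first identify the rotational model and then extend the agreement with it by a continuity argument in the meridian parameter.

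\emph{Step 1: identify the model.} On $U$ the profile is a rotational annulus whose latitude curvature is $t$ and whose meridian curvature is $t^3$. With $q=\sin$ of the profile angle, the radial equation from the proof of Proposition~\ref{prop:quartic-rotation} gives $\kappa_\theta=q/r=t$ and $\kappa_r=q'=q^3/r^3$. Integration yields $q^{-2}=r^{-2}-8a$ for a constant $a$ on the annulus, hence $q=r/\sqrt{1-hr^2}$ with $h=1+8a$. Equivalently, the profile satisfies $u'=r/\sqrt{1-hr^2}$ up to a vertical translation.

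\emph{Step 2: exclude the noncompact quadrics.} Nonumbilicity gives $h\ne1$. We must still rule out $h\le0$. To do so, we extend the model to its maximal rotational surface $\Sigma_h$, which for $h>0$ is the full ellipsoid \eqref{eq:completed-ellipsoid} and otherwise a paraboloid or a hyperboloid sheet.

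\emph{Step 3: the continuity argument.} Let $\mathcal U\subset M$ be the largest connected open set containing $U$ on which $X$ agrees with a fixed rigid image of $\Sigma_h$ minus its pole(s), as a local isometric covering compatible with latitudes. Openness away from the poles comes from Lemma~\ref{lem:principal-coordinates} together with Proposition~\ref{prop:periodic-uniqueness}.

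Take a latitude circle $\gamma$ at the frontier of the agreement set. By continuity of $A$ and of $dt$ (which is $C^1$), $\gamma$ is still a closed principal curve on which $t$ takes the model value, which lies in $(0,1)$ or $(1,\infty)$ since $h\ne1$ away from the poles. Its limit circle in $M$ is embedded because it is traversed once as a limit of traversed-once circles. Lemma~\ref{lem:principal-coordinates} then gives a principal cylinder around $\gamma$ in which $t$ and $t_y$ match the model on the complete circle. Proposition~\ref{prop:periodic-uniqueness} gives agreement on the whole cylinder, fixed by the overlap, so the agreement set extends past $\gamma$.

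Following meridian geodesics, agreement therefore propagates across every latitude of $\Sigma_h$. If $h\le0$, the latitude radii grow without bound, contradicting compactness of $X(M)$; hence $h>0$.

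\emph{Step 4: the poles.} The closure of the agreement region reaches the two poles of the ellipsoid. At each pole the limiting points of $M$ are umbilics with $A=\Id$ (in the normalized scaling). There Proposition~\ref{prop:local-dichotomy}, applied in graph coordinates, shows the surface is locally a graph agreeing with the ellipsoid cap, by uniqueness of the rotational piece already matched on a punctured neighbourhood. Thus the set of points of $M$ whose neighbourhoods are mapped isometrically onto open pieces of the ellipsoid is open and closed, hence all of $M$.

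\emph{Step 5: embedding.} $X$ is then a local diffeomorphism from a compact connected surface onto the simply connected ellipsoid, so it is a covering and hence a diffeomorphism. This also shows $M$ is a sphere.

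The main obstacle is the frontier argument in Step 3: one must verify that the limiting latitude circle is embedded in $M$, avoids umbilics, and carries $C^1$ data for $t$ sufficient to invoke Proposition~\ref{prop:periodic-uniqueness}. One must also handle a priori self-overlap of the immersion, which I would address by working in $M$ with the pulled-back meridian geodesic flow rather than in the image.
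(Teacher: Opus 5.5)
Your Steps~1--3 follow the paper's route: the first integral $t=(1+(1-h)r^2)^{-1/2}$, the pulled-back meridian geodesic family $F(\theta,s)=\operatorname{Exp}_{\gamma(\theta)}((s-s_0)V(\theta))$, a maximal matching interval $J$, principal cylinders with Proposition~\ref{prop:periodic-uniqueness} at a finite endpoint, and compactness against $h\le0$. Two points there need to be stated precisely. First, the frontier is a complete circle only if you define agreement through $F$ on $(\R/2\pi\mathbb Z)\times J$ from the start, as your last paragraph suggests; the frontier of a general agreement set $\mathcal U$ need not be a latitude. Second, that circle is embedded because $X\circ F(\cdot,b)=Q(b,\cdot)$ with $r(b)>0$, not because it is a limit of embedded circles, since such limits can pinch. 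Two smaller slips: in Step~1 it is $u'$, not $q$, that equals $r/\sqrt{1-hr^2}$, and integrating in $r$ breaks down at an equator, which the paper avoids by working in arc length.

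The genuine gap is Step~4. Proposition~\ref{prop:local-dichotomy} is a $C^4$ result, resting on Lemmas~\ref{lem:quartic-jet} and~\ref{lem:c4-contact}, but this proposition assumes only a $C^3$ immersion. At $C^3$, Lemma~\ref{lem:cubic-jet} gives no quartic expansion to feed into Proposition~\ref{prop:quartic-rotation}. In the $C^4$ setting of Theorem~\ref{thm:main} you could finish your route by comparing the dichotomy's quadric with the ellipsoid on the open matched set accumulating at a point over the pole, but that does not prove the $C^3$ statement. Moreover, ``already matched on a punctured neighbourhood'' presupposes exactly what must be shown.

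The paper's missing idea works at $C^3$ and makes the dichotomy unnecessary. $F$ is $C^1$ on all of $(\R/2\pi\mathbb Z)\times\R$ and $|F_\theta|_g=r(s)\to0$, so each circle $F(\cdot,s_\pm)$ is a single point $p_\pm$. The velocities $F_s(\theta,s_-)$ and $-F_s(\theta,s_+)$ traverse the unit tangent circle once, because $dX_{p_\pm}$ is a linear isometry and $dX\,F_s=Q_s$ there, while $Q_s(s_\pm,\theta)$ traverses the horizontal unit circle once. Since $D\operatorname{Exp}_p(0)=\Id$ for the $C^1$ exponential map, geodesic uniqueness identifies the family near $p_\pm$ with a full normal coordinate disk. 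Hence $F((\R/2\pi\mathbb Z)\times[s_-,s_+])$ is open, and it is closed as a compact image, so it equals $M$.

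Run the open--closed argument on this set, not on the set of points having a neighbourhood mapped into the ellipsoid. Closedness of the latter at non-pole points is not a local matter: \eqref{eq:principal-divergence} is hyperbolic, and Proposition~\ref{prop:periodic-uniqueness} needs Cauchy data on a complete circle. Once $X(M)$ lies in the ellipsoid, your covering argument in Step~5 is a valid alternative to the paper's injectivity argument.
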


\begin{proof}
Parametrize the meridian of the given annulus by arc length $s$ and write
\[
 Q(s,\theta)=(r(s)\cos\theta,r(s)\sin\theta,z(s)),
 \qquad \theta\in\R/2\pi\mathbb Z.
\]
Choose the direction of $s$ and the axial coordinate so that, for the
meridian angle $\psi$, the positive principal curvatures give
\begin{equation}\label{eq:meridian-system}
 r'=\cos\psi,\qquad z'=\sin\psi=tr>0,
 \qquad \psi'=t^3.
\end{equation}
Primes here denote derivatives with respect to $s$. Differentiating
$t=\sin\psi/r$ gives $t'=(t^3-t)r'/r$. Consequently,
$[(t^{-2}-1)/r^2]'=0$. Denote this constant by $1-h$. Then
\begin{equation}\label{eq:meridian-first-integral}
 t=\bigl(1+(1-h)r^2\bigr)^{-1/2},\qquad
 (r')^2=\frac{1-hr^2}{1+(1-h)r^2}.
\end{equation}
The case $h=1$ would give $t=1$, contrary to the initial annulus.
The derivation did not divide by $r'$, so it also applies at an equator.
Since $D=1-t^2$, the first integral also gives
\begin{equation}\label{eq:model-period}
 D=(1-h)t^2r^2,
 \qquad
 \ell=\int_\gamma\frac{t}{\sqrt{|D|}}\,ds_g
      =\frac{2\pi tr}{\sqrt{|D|}}
      =\frac{2\pi}{\sqrt{|1-h|}}
\end{equation}
on every model latitude $\gamma$. Thus its principal-coordinate period
is independent of the latitude.

For $h>0$ the complete model is \eqref{eq:completed-ellipsoid}. The
parametrization
\[
 r=\frac{\sin v}{\sqrt h},\qquad
 z=z_0-\frac{\cos v}{h},\qquad 0<v<\pi,
\]
has a positive arc-length element, is regular at the equator, and has
$t>0$, $t\ne1$ except at the two poles. Its maximal meridian interval
without poles is finite at both ends. If $h\leq0$, one meridian end is
a pole and the other has $r\to\infty$, with
\begin{equation}\label{eq:infinite-meridian-end}
 \left|\frac{ds}{dr}\right|
   =\sqrt{\frac{1+(1-h)r^2}{1-hr^2}}\geq1.
\end{equation}
Thus this end has infinite arc length. Let $I$ be the maximal meridian
interval without poles containing the initial annulus in either case.
Every finite interior point of $I$ has $r>0$, $t>0$, and $t\ne1$.
The inequality $z'>0$ makes $Q$ injective on
$(\R/2\pi\mathbb Z)\times I$.

Fix an initial latitude $\gamma(\theta)\in U$ corresponding to
$Q(s_0,\theta)$ and let $V(\theta)$ be its forward unit meridian vector.
The local inverse of the $C^3$ immersion gives $\gamma\in C^3$ and
$V\in C^2$. Meridians in the initial rotational annulus are geodesics
for its metric $ds^2+r(s)^2d\theta^2$. Let $\operatorname{Exp}$ denote
the exponential map of the induced metric, and define
\begin{equation}\label{eq:global-geodesic-family}
 F(\theta,s)=\operatorname{Exp}_{\gamma(\theta)}
                  ((s-s_0)V(\theta)).
\end{equation}
This family is defined for every $s\in\R$. Indeed, $g\in C^2$ has
$C^1$ Christoffel symbols, so its geodesic vector field on the tangent
bundle is $C^1$. The vector field preserves unit speed. Compactness of
the unit tangent bundle prevents escape in finite time and gives complete
geodesics by local ordinary differential equation continuation.
Dependence on the initial data is $C^1$, hence $F$ is $C^1$ jointly in
$(\theta,s)$. The velocity $F_s$ is the velocity component of this flow.
On an initial open interval about $s_0$, one has $X\circ F=Q$.

Let $J\subset I$ be the maximal open interval containing $s_0$ on which
this equality holds for every $\theta$. Suppose that $b\in I$ is a
finite right endpoint of $J$. Continuity of $F$ and its first derivatives
gives $X\circ F=Q$ and equality of first derivatives at $s=b$, so
\begin{equation}\label{eq:endpoint-first-order}
 |F_s|_g=1,\qquad |F_\theta|_g=r(b)>0,
 \qquad g(F_s,F_\theta)=0.
\end{equation}
The curvature at this circle is determined by continuity on $M$.
On the matched open side, $dQ$ has rank two, and therefore so does $dF$.
Locally choose a linear projection $\pi:\R^3\to\R^2$ for which
$\pi\circ X$ is a $C^3$ coordinate map. On that side,
$F=(\pi\circ X)^{-1}\circ\pi\circ Q$ is $C^3$, so the original shape
operator agrees with the model in the matched principal directions.
In particular $K(F(\theta,s))=t(s)^4$ there. Continuity of $K$ first gives
$K(F(\theta,b))=t(b)^4>0$. The endpoint circle thus has a positive
curvature neighborhood with its unique positive normal. In that
neighborhood the shape operator is $C^1$, and the principal relations
on the old side pass to the endpoint:
\begin{equation}\label{eq:endpoint-shape-continuity}
 A F_\theta=t(b)F_\theta,
 \qquad A F_s=t(b)^3F_s\qquad\text{at }s=b.
\end{equation}
This argument requires only the first derivatives of $F$ at the endpoint.
The endpoint remains in the positive curvature component containing $U$,
since each matched meridian segment joins it to $U$ through positive
curvature points.

The endpoint circle is embedded in $M$: equality of two source points
would imply equality of the corresponding points of the model latitude,
which is traversed once. Equations~\eqref{eq:endpoint-first-order}
and the $C^1$ inverse function theorem give local invertibility of $F$
near this circle. They give an embedded annulus of uniform width.
To see injectivity after shrinking, a sequence of distinct coincident
pairs in thinner annuli would have limiting points on the endpoint
circle. Injectivity of that circle forces the same limiting angle, and
local invertibility then excludes the pairs. The annulus can also be
chosen with $t$ bounded away from $0$ and $1$.

We construct principal coordinates on this annulus before applying
Proposition~\ref{prop:periodic-uniqueness}. The $t^3$ spectral line is
$C^1$. Choose its local unit sections with positive scalar product with
$F_s$; near the endpoint circle this choice is possible by
\eqref{eq:endpoint-shape-continuity}. The signs agree on overlaps,
so they define a $C^1$ field $e_2$ equal to $F_s$ on the circle.
Choose $e_1$ so that $(e_1,e_2)$ is positively oriented; it is tangent
to the latitude. The circle is an
$e_1$ curve. Lemma~\ref{lem:principal-coordinates} gives $C^2$ coordinates
$(x\bmod\ell,y)$ with the endpoint circle at $y=0$, and
\[
 \partial_y=Y=\frac{e_2}{|D|^{3/2}}.
\]
The model has the same period $\ell$ and the same initial circle.
Choose its principal coordinates to agree with those on the matched side.
There the curvature is $T(y)$, independent of $x$. Since the original
$t$ is $C^1$ and $Y$ is continuous, both $t$ and $Yt=t_y$ extend
continuously to the endpoint circle. Their values agree there with
$T$ and $T_y$ on the entire circle.
Indeed, on the matched side $e_2=F_s$, and hence
\begin{equation}\label{eq:model-transverse-data}
 \frac{dy}{ds}=|D|^{3/2},
 \qquad t_y=\frac{t'}{|D|^{3/2}}.
\end{equation}
Equations~\eqref{eq:model-period} and \eqref{eq:model-transverse-data}
show that the source and model cylinders have the same period and the
same Cauchy data $t,t_y$ at the endpoint.

Proposition~\ref{prop:periodic-uniqueness} now matches the new cylinder
with the same rotational model, with its rigid motion fixed by the old
side. Its unit meridians are geodesics and have, at $s=b$, the same
positions and velocities as $F$. Uniqueness for the $C^1$ geodesic vector
field identifies them with $F$. Thus $J$ extends beyond $b$, a
contradiction. The same argument applies at a left endpoint, and $J=I$.

If $h\leq0$, every finite point of the infinite model end is therefore
contained in $X(M)$. Equation~\eqref{eq:infinite-meridian-end} implies
that $r$ is arbitrarily large there, contradicting compactness of $X(M)$.
Hence $h>0$, and the matching covers the ellipsoid with its two poles
removed. Write $I=(s_-,s_+)$ for its finite meridian interval.

The family $F$ was already defined at $s_-$ and $s_+$. Since
$|F_\theta|_g=r(s)\to0$, its $C^1$ regularity gives
$F_\theta(\theta,s_\pm)=0$. Each endpoint circle is consequently one
source point $p_\pm$. Their images are the distinct poles, so
$p_-\ne p_+$. At the lower pole the vectors
$dX_{p_-}F_s(\theta,s_-)=Q_s(s_-,\theta)$ traverse the unit circle in
the tangent plane exactly once. At the upper pole use the inward
velocities $-F_s(\theta,s_+)$. Since $dX_p$ is a linear isometry onto
the tangent plane, the source velocities also traverse each complete
unit tangent circle once.

The exponential map at either pole is $C^1$. Linearizing the geodesic
system at zero initial velocity gives $D\operatorname{Exp}_p(0)=\Id$,
so the inverse function theorem gives a full normal coordinate disk.
Geodesic uniqueness identifies the matched family near that pole with
$\operatorname{Exp}_p(u v(\theta))$, where $u\geq0$ and the velocities
$v(\theta)$ traverse the unit circle once. Thus the matched set, with
the two poles added, is open in $M$. It is also the continuous image of
the compact cylinder $(\R/2\pi\mathbb Z)\times[s_-,s_+]$, hence is
closed in $M$. Connectedness implies that it is all of $M$.
Injectivity of the model cylinder and the distinct pole images now give
injectivity of $X$ on $M$. A compact injective immersion is an embedding,
which proves the proposition.
\end{proof}

For the local graph $u=\zeta+a r^4+o(r^4)$ produced in
Proposition~\ref{prop:quartic-rotation}, the model parameter is
$h=1+8a$. Indeed, the lower pole of \eqref{eq:completed-ellipsoid} is
the graph
\[
 u(r)=\frac{1-\sqrt{1-hr^2}}{h}
     =\frac{r^2}{2}+\frac{h r^4}{8}+O(r^6),
\]
whereas $\zeta(r)=r^2/2+r^4/8+O(r^6)$. Compact completion therefore
forces $a>-1/8$.

\subsection{Proof of the cubic classification}\label{sec:classification}
\begin{proof}[Proof of Theorem~\ref{thm:main}]
A connected component of a manifold is open and closed, so each component
of the compact surface $M$ is again compact without boundary. Work on
one component. Lemma~\ref{lem:positive} gives $c>0$. Rescale to $c=1$
as in Section~\ref{sec:existence}. Proposition~\ref{prop:umbilic}
supplies a nonzero umbilic. With its positive normal, represent a
neighborhood as a graph $u$ satisfying
$u(0)=Du(0)=0$ and $D^2u(0)=\Id$.

By Proposition~\ref{prop:local-dichotomy}, this graph is either a piece
of the unit sphere or a nonspherical rotational quadric. In the spherical
case Proposition~\ref{prop:sphere-propagation} gives the entire sphere.
The immersion into that sphere is a proper local diffeomorphism and hence
a covering map. Since the sphere is simply connected, the covering has
one sheet, so $X$ is an embedding on this component.
In the other case write its parameter as $h=1+8a\ne1$.
A sufficiently small annulus around the graph center is a complete,
single-covered rotational annulus. Its principal curvatures are positive
and distinct, with parallel curvature $t$ and meridian curvature $t^3$.
Proposition~\ref{prop:annulus-completion} therefore embeds the component
as an ellipsoid of revolution.

We now return to the original immersion and constant $c$ before normalization.
For the semiaxes and the converse, parametrize an ellipsoid by
\[
 Q(v,\theta)=(a_e\sin v\cos\theta,a_e\sin v\sin\theta,b_e\cos v),
 \qquad 0<v<\pi,
\]
where $a_e,b_e>0$, and put
$E=a_e^2\cos^2v+b_e^2\sin^2v$.
For the inward normal its parallel and meridian curvatures are
\begin{equation}\label{eq:axis-relation}
 \kappa_\theta=\frac{b_e}{a_e\sqrt E},\qquad
 \kappa_v=\frac{a_e b_e}{E^{3/2}},\qquad
 \frac{\kappa_v}{\kappa_\theta^3}=\frac{a_e^4}{b_e^2}.
\end{equation}
The pole curvatures, obtained by continuity, both equal $b_e/a_e^2$.
Consequently the unordered relation at a pole forces
$c=a_e^4/b_e^2$, regardless of the labels elsewhere.
Equivalently $b_e=a_e^2/\sqrt c$. Conversely,
\eqref{eq:axis-relation} verifies the same fixed relation on the
whole ellipsoid, including the poles by continuity. A sphere has
radius $\sqrt c$.
\end{proof}

The nonzero-umbilic argument and the completion of a specified rotational
annulus require only $C^3$ regularity. Proposition~\ref{prop:quartic-rotation}
also applies to a $C^3$ graph with the stated quartic remainder estimates.
For a general $C^3$ graph, Lemma~\ref{lem:cubic-jet} removes the cubic
spherical difference but does not supply those fourth-order estimates.
This proof therefore neither classifies arbitrary $C^3$ immersions nor
determines the optimal regularity.

\section*{Acknowledgments}
The author thanks Professor Bobo Hua for his support.

\section*{Disclosure on AI assistance}
The author used AI-assisted tools, principally ChatGPT. The author verified
and completed all mathematical arguments, and takes full responsibility for
the content of the paper.

\begingroup
\raggedright
\bibliographystyle{amsplain}
\bibliography{references}
\endgroup
\end{document}